 \newtheorem{definition}{Definition}[section]
 \newtheorem{theorem}[definition]{Theorem}
 \newtheorem{lemma}[definition]{Lemma}
 \newtheorem{prop}[definition]{Proposition}
 \newtheorem{corollary}[definition]{Corollary}
 \newtheorem*{theorem*}{Theorem}
\newtheorem*{proposition*}{Proposition}
\newtheorem*{lemma*}{Lemma}
 \theoremstyle{remark}
 \newtheorem{remark}[definition]{Remark}
\newcommand{\op}[1]{\operatorname{#1}}
\def\XXint#1#2#3{{\setbox0=\hbox{$#1{#2#3}{\int}$}
\vcenter{\hbox{$#2#3$}}\kern-.5\wd0}}
\newcommand{\C}{\ensuremath{\mathbb{C}}}
\newcommand{\R}{\ensuremath{\mathbb{R}}} 
\newcommand{\Z}{\ensuremath{\mathbb{Z}}}
\newcommand{\vol}{\op{vol}}
\newcommand{\End}{\ensuremath{\op{End}}}
\newcommand{\ba}{\begin{eqnarray}}
   \newcommand{\na}{\end{eqnarray}}
\begin{document}
%\Large
\title{ 
$L^2$ harmonic theory, Seiberg-Witten theory and asymptotics of differential forms}
\author{Tsuyoshi Kato}
\address{Department of Mathematics,
Faculty of Sciences,
Kyoto University,
Kyoto 606-8502
Japan, Email: tkato@math.kyoto-u.ac.jp}

\date{}
\maketitle

\begin{abstract}
We present a pair of open smooth $4$-manifolds that are mutually homeomorphic. One of them admits a Riemannian metric that possesses quasi-cylindricity, and positivity of scalar curvature and of dimension of certain $L^2$ harmonic forms. By contrast, for the other manifold, no Riemannian metric can simultaneously satisfy these properties. Our method uses Seiberg-Witten theory on compact $4$-manifolds and applies $L^2$ harmonic theory on non-compact, complete Riemannian $4$-manifolds. We introduce a new argument to apply Gauge theory, which arises from a discovery of an asymptotic property of the range of the differential. 
\end{abstract}

\section{Introduction}
It is a basic question to ask how a smooth structure
  influences the global Riemannian structure
of a smooth manifold $X$.
 The de Rham cohomology group
is given a priori by using a smooth structure on $X$, and is actually
a  topological  invariant.
If we set   a Riemannian metric $g$   on $X$, where $X$ is compact, 
then each element admits  a harmonic representative,

 If $X$ is non-compact,
we obtain (un-)reduced $L^2$ cohomology groups by using $g$.
In contrast  to the compact case, 
these cohomology groups depend on  the choice of complete Riemannian metrics.
It is well known that the reduced $L^2$ cohomology group of $(X,g)$ is isomorphic to
the space of $L^2$ harmonic forms.
So it would be interesting to ask how a particular choice of  smooth structure on $X$  influences
the structure of $L^2$ harmonic forms on $X$.

 Let us say that a closed differential  form $u \in \Omega^*(X)$ is $L^p$ {\em exact at infinity},
if there is a compact subset $K \subset X$ 
and a differential form $\alpha \in \Omega^{*-1}(X \backslash K)$
such that 
it is exact 
$$u|X \backslash K = d \alpha$$
outside $K$, 
with finite $L^p$ norm $||\alpha||_{L^p(X \backslash K)} < \infty$.

The $L^p$-exactness was originally introduced by Gromov to study the 
the Singer conjecture \cite{gromov}.
In fact, it has been verified that if a compact K\"ahler 
manifold $(M, \omega)$
satisfies the property that the lift $\tilde{\omega}$
of the
K\"ahler form  on the universal covering space
$X: = \tilde{M}$
is exact $\tilde{\omega} = d \alpha$ with $||\alpha||_{L^{\infty}(X)} < \infty$, then 
the $L^2$-Betti numbers all vanish except the
middle degree. Moreover the 
$L^2$-Betti number at the
middle degree does not vanishes. In particular 
the Hopf conjecture holds, that states
$(-1)^m \chi(M)  >0$, where $\chi$ is the Euler characteristic and $\dim_{\C} M=m$.

The Singer conjecture has been applied to study of $4$-dimensional differential topology through 
Gauge theory. Furuta has verified $10/8$-type inequality
for a compact spin $4$-manifold \cite{Furuta}.
Combining of a covering  version of the Furuta's $10/8$-type inequality with the Singer conjecture leads us to the 
aspherical $10/8$-type inequality, that 
 replaces the self-dual Betti number in the Furuta's inequality by the 
Euler characteristic
 (Section $1$ in \cite{kato2}).
The covering  version of the Furuta's $10/8$-type inequality
is satisfied for compact spin $4$-manifolds with residually
finite fundamental groups.

Let 
$(X,g)$ be a complete Riemannian manifold, and 
 take an exhaustion
$K_1 \Subset K_2 \Subset \dots \Subset X$  by compact subsets,
where $K \Subset L$ means that the interior $\mathring L$ contains $K$.
Let us say that the family $\{K_i\}_i$   is  isometric-pasting, if there is $\epsilon >0$ and 
diffeomorphisms
\[
\phi_i : K_i \cong K_{i+1}
\]
such that the restrictions
\[
\phi_i: N_{\epsilon}(\partial K_i) \cong N_{\epsilon}(\partial K_{i+1})
\]
are isometric, where $N_{\epsilon}(\partial K_i) \subset K_i$ is an $\epsilon$ neighbourhood.

\begin{definition}\label{q-cy}
$(X,g)$ is quasi-cylindrical-end, if it admits an isometric-pasting family.
\end{definition}

We have the following basic example.
\begin{lemma}\label{cylindrical}
A  Riemannian manifold with cylindrical-end is quasi-cylindrical-end.
\end{lemma}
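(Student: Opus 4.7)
The plan is to exploit the product structure at infinity of a cylindrical-end manifold. Write $(X,g) = X_0 \cup_N (N \times [0,\infty))$, where $X_0$ is a compact manifold with boundary $\partial X_0 = N$ and the metric on $N \times [0,\infty)$ is the product $g_N + dt^2$. Take the natural exhaustion $K_i := X_0 \cup (N \times [0, i])$ for $i \geq 1$; each $K_i$ is compact with boundary $N \times \{i\}$, and if one fixes, say, $\epsilon = 1/2$, then the $\epsilon$-neighbourhood $N_\epsilon(\partial K_i)$ is contained in the cylindrical portion and is isometric to $N \times [i - \epsilon, i]$ with the product metric.

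The heart of the argument is the construction of the diffeomorphisms $\phi_i : K_i \cong K_{i+1}$. I will build these so that they are pure translation by $1$ in the $t$-coordinate near the boundary. Concretely, I will choose a smooth increasing diffeomorphism $f_i : [0, i] \to [0, i+1]$ that agrees with the identity $f_i(t) = t$ on $[0, 1/4]$ and with the shift $f_i(t) = t+1$ on $[3/4, i]$, interpolating monotonically on the intermediate region; such an $f_i$ is produced by a standard bump function construction. Then set
\[
\phi_i := \op{id}_{X_0} \cup \bigl( (x,t) \mapsto (x, f_i(t)) \bigr).
\]

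I will next check that $\phi_i$ is a well-defined diffeomorphism: along $N = \partial X_0 = N \times \{0\}$ both pieces are the identity because $f_i(t) = t$ near $0$, so smoothness of the glued map is automatic; bijectivity onto $K_{i+1}$ follows from $f_i$ being a diffeomorphism of intervals. For the isometric-pasting property, on $N_\epsilon(\partial K_i) = N \times [i - 1/2, i]$ the map reduces to $(x,t) \mapsto (x, t+1)$, which is evidently an isometry of the product metric onto $N \times [i + 1/2, i+1] = N_\epsilon(\partial K_{i+1})$.

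I do not expect any serious obstacle: the only nontrivial point is arranging $f_i$ to transition smoothly from the identity near $0$ to translation near $i$ while remaining a diffeomorphism of $[0, i]$ onto $[0, i+1]$, which is routine bump-function gluing and requires $i \geq 1$ so that there is room for the interpolation. Once this interpolation is set up, the isometric-pasting condition of Definition \ref{q-cy} holds by construction.
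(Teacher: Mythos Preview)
Your proof is correct and follows essentially the same construction as the paper: define $K_i$ by truncating the cylinder at height $i$, and build $\phi_i$ from a reparametrization $f_i$ of the interval that is the identity near $0$ and translation by $1$ near the top. The only cosmetic slip is that with $\epsilon=1/2$ and interpolation on $[1/4,3/4]$, the collar $N_\epsilon(\partial K_1)=N\times[1/2,1]$ is not entirely contained in the region where $f_1(t)=t+1$; taking $\epsilon=1/4$ (or starting the exhaustion at $i\ge 2$) fixes this without changing anything else.
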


\begin{proof}
 $X $ is isometric to a cylindrical-end manifold of the form $ X_0 \cup Y \times [0, \infty)$.
 Then, we set 
  $K_i : = X_0 \cup Y \times [0, i + \epsilon]$ 
  with $N_{\epsilon}(\partial K_i) =  Y \times [i, i+ \epsilon] $
 for $i \geq 1$.
Let $f_i : [0, i+ \epsilon] \to [0, i+1+ \epsilon]$ be a diffeomorphism
with $f_i (t) = t$ for  $t \in [0, \frac{1}{2}]$ and 
$f_i (t) = t+1$ for  $t \in [i, i + \epsilon] $. Then $f_i$ extends to the desired diffeomorphism
$\phi_i: K_i \cong K_{i+1}$.
\end{proof}

Scalar curvature is another basic invariant of  complete Riemannian manifolds $(X,g)$.
In particular, in the non-compact case,
uniform positivity of the scalar
curvature allows us to construct  a Fredholm theory of Dirac operators 
and apply it to study the  topology of  manifolds
\cite{gromov and lawson2}.
Note that there is a difference between the existence of positive and  flat scalar curvatures.
 For example any torus can admit a flat metric, but
cannot admit any metric of positive scalar curvature.
In this paper, we treat an intermediate class that consists of complete Riemannian manifolds with a positive scalar
curvature that are not  assumed to be uniform.
In our  non-uniform case, we cannot expect to obtain  a Fredholm theory as above.

Let us use  $(*)$ to denote  if $(X,g)$ satisfies the following conditions:

\vspace{2mm}

\[
(*)
\begin{cases}
&   \ (X,g) \text{ is quasi-cylindrical-end and has positive   scalar curvature,} \\
&   \ \dim \ \mathcal H^+_e(X,g) >0 \text{ is positive} 
\end{cases}
\]
where
 $\mathcal H^+_e(X,g) $ is the space of self-dual $L^2$ harmonic forms 
  that are $L^2$ exact at infinity.

\vspace{2mm}

In this paper we present a pair of  smooth  $4$-dimensional open manifolds
which have the following characteristics.

\begin{theorem}\label{main}
There is  a pair of  oriented smooth $4$-dimensional open manifolds $X$ and $X'$ with the following properties:

\vspace{2mm}

$(1)$ $X$ and $X'$ are mutually homeomorphic.

\vspace{2mm}

$(2)$ $X'$ admits a complete Riemannian metric with $(*)$.

\vspace{2mm}

$(3)$ $X$ cannot admit any complete Riemannian metric with $(*)$.
\end{theorem}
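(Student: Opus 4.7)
My plan is to approach this through a standard exotic-smooth-structure construction combined with a gauge-theoretic obstruction. For the positive direction, I would start with a compact oriented smooth 4-manifold $M'$ admitting a metric of positive scalar curvature and with $b^+(M')>0$, for instance a connected sum of the form $\#^{a}\mathbb{CP}^{2}\#^{b}\overline{\mathbb{CP}^{2}}$ with $a\geq 1$, or $S^{2}\times S^{2}$. Split $M'$ along a separating codimension-one submanifold $Y$ and attach an infinite cylinder to one piece $M'_{0}$, giving $X' = M'_{0}\cup_{Y}(Y\times[0,\infty))$. Choose the metric to be exactly cylindrical outside a compact set, so by Lemma \ref{cylindrical} it is quasi-cylindrical-end, and arrange throughout that the scalar curvature stays positive. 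The space $\mathcal{H}^{+}_{e}(X',g)$ is then controlled by APS-type analysis on cylindrical ends; I would produce a nonzero element by taking a self-dual closed form on $M'_{0}$ representing a cohomology class that vanishes upon restriction to $Y$, so that after damping it becomes $L^{2}$ exact on the cylindrical end.

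For the negative side, I would build $X$ by replacing $M'_{0}$ with a smoothly exotic but topologically equivalent piece $M_{0}$, for example via Fintushel--Stern knot surgery or a log-transform, and glue in the same cylindrical end. Freedman's topological classification together with its standard noncompact extension then yields that $X$ and $X'$ are homeomorphic. The smooth information recorded by $M_{0}$ carries a nontrivial Seiberg--Witten constraint: either $M_{0}$ has a nonzero SW basic class that is incompatible with positive scalar curvature, or Furuta's $10/8$-inequality in the covering / aspherical form used in \cite{kato2} is violated by the smooth invariants of $M_{0}$.

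To derive the contradiction, suppose $X$ admits a complete metric $g$ with $(*)$ and an isometric-pasting family $\{K_{i}\}_{i}$, and let $u\in\mathcal{H}^{+}_{e}(X,g)$ be a nonzero self-dual $L^{2}$ harmonic form with $u=d\alpha$ outside some compact $K\subset X$ and $\alpha\in L^{2}(X\setminus K)$. For $i$ large, use the isometries $\phi_{i}:N_{\epsilon}(\partial K_{i})\cong N_{\epsilon}(\partial K_{i+1})$ of Definition \ref{q-cy} to glue two copies of $K_{i}$ along $N_{\epsilon}(\partial K_{i})$ by an isometry, producing a sequence of closed Riemannian $4$-manifolds $Z_{i}$ inheriting positive scalar curvature. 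The key input is then the asymptotic property of the range of $d$ advertised in the abstract: since $\alpha$ has finite $L^{2}$ norm on the end, its tails over $N_{\epsilon}(\partial K_{i})$ are small, and this allows one to truncate $u$ and patch the truncations across the isometric seams to obtain a nonzero self-dual (approximately) harmonic form on $Z_{i}$, forcing $b^{+}(Z_{i})>0$. Positive scalar curvature on $Z_{i}$ then empties the relevant SW moduli spaces, while the smooth type of $Z_{i}$ inherited from $M_{0}$ forces a nonzero SW invariant, or alternatively violates $10/8$, yielding the contradiction.

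The step I expect to be hardest is the middle one: extracting a genuinely useful self-dual class on the closed $Z_{i}$ from an only asymptotically exact $L^{2}$ form on the open $X$. Controlling the cut-off and gluing error and ensuring that $b^{+}(Z_{i})$ is genuinely positive along the sequence requires exactly the asymptotic property of the range of the differential the author highlights; getting the quantitative $L^{2}$ estimates to survive the pasting is delicate, particularly because the quasi-cylindrical-end hypothesis is strictly weaker than a genuine cylindrical structure, and the isometries $\phi_{i}$ are only local on the collars $N_{\epsilon}(\partial K_{i})$, so global Hodge-theoretic matching on $Z_{i}$ is not automatic.
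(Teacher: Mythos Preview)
Your proposal has a genuine gap in the obstruction argument, and it diverges from the paper's method in an essential way.

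The paper does \emph{not} double $K_i$ to form closed manifolds $Z_i$. Instead it uses a fixed smooth embedding $X\subset M$ into a specific compact $4$-manifold, namely $M=K3$, coming from the topological splitting $K3\cong 2|{-E_8}|\,\sharp\,3(S^2\times S^2)$. The quasi-cylindrical structure is used only to extend $g|K_i$ to Riemannian metrics $h_i$ on all of $M$ with uniformly bounded $\vol(M\setminus K_i)$ and scalar curvature bounded below (Lemma~\ref{family}). Since $SW(K3)\neq 0$ for the spin structure, there are perturbed SW solutions $(A_i,\phi_i)$ on $(M,h_i)$; Proposition~\ref{sc-infty} gives uniform bounds $\|\phi_i\|_{L^4}$, $\|da_i\|_{L^2}\leq C$; after gauge fixing one extracts a limit $(A,\phi)$ on $(X,g)$; positivity of scalar curvature forces $\phi\equiv 0$ (Lemma~\ref{vanishing}); hence $d^+a_i\to v^+$ with $\|da_i\|_{L^2(K_i)}$ uniformly bounded, contradicting Corollary~\ref{cut-off} (the ``asymptotic property of the range of the differential'' from the abstract). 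The $L^2$-exact harmonic form is the \emph{obstruction}, not something to be transplanted onto a glued manifold.

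Your doubling strategy fails for several reasons. First, Definition~\ref{q-cy} gives isometries $N_\epsilon(\partial K_i)\cong N_\epsilon(\partial K_{i+1})$ between \emph{different} collars, not an orientation-reversing self-isometry of a single collar; there is no mechanism to glue two copies of $K_i$ isometrically. Second, even granting a double $Z_i=DK_i$, its signature vanishes, so Furuta's $10/8$ says nothing; and there is no reason a double should carry a nonvanishing SW invariant, so ``the smooth type of $Z_i$ inherited from $M_0$ forces a nonzero SW invariant'' is unjustified. Third, positive scalar curvature together with $b^+>0$ is not by itself a contradiction; it only forces SW moduli to be empty, which is vacuous unless you already know $SW\neq 0$. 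Finally, the hypothetical metric $g$ with $(*)$ need not have anything to do with your original cylindrical construction of $X$, so the exhaustion $\{K_i\}$ need not contain your $M_0$ as a recognizable piece.
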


Our proof is based on a new approach
to Seiberg-Witten theory based on
the  theory of $L^2$ harmonic forms over complete Riemannian $4$-manifolds.
Among the three conditions $(*)$ as defined above, both 
quasi-cylindricity and positivity of scalar curvature are used to conclude that a SW solution
at the  limit of metric deformation
on $X$ consists of  a zero spinor section.
It allows us to apply $L^2$ harmonic theory  on complete Riemannian manifolds.

\begin{remark}\label{q-cyl}
Quasi-cylindricity is a differential-topological condition,
and it is not known whether $X$ above 
admits such a structure.
Note that quasi-cylindricity  condition 
on $X$
does not involve smooth embedding
$ X \subset M$.
On the  other hand
in our case  in Theorem \ref{main},
 there is a smooth embedding $X \subset M$ into a compact $4$-manifold.
Quasi-cylindricity  is used to guarantee two metric properties
that $(1)$ the scalar curvatures satisfy
a uniformly lower bound from below and that  $(2)$
  volumes of compact subsets $M \backslash X$ are uniformly bounded from above,
 during metric deformation.
 Actually we can replace the  condition 
 of quasi-cylindricity 
 by the conclusions of metric properties in 
 Lemma \ref{family}, if we focus on an open $4$-manifold
$X$  with a smooth embedding  into a
 compact smooth $4$-manifold $M$.
 \end{remark}

We believe  that our method could still work  without the above two conditions.
We conjecture that Theorem \ref{main} can still hold (for the same example $X$ and $X'$ above),
if we replace the condition $(*)$ by 

%\vspace{2mm}

\[
(*') \    \ \dim \ \mathcal H^+_e(X,g) >0 \text{ is positive.} 
\]
To follow a parallel argument 
without such conditions, one will has to construct Seiberg-Witten moduli theory over  $X$. 
So far
Gauge theory over end-periodic manifolds has been extensively developed \cite{taubes}.
Our main result has been known under the stronger 
assumption of end-periodic metrics.
The end-periodic condition allows one to 
apply the analytic method of the Taubes-Fourier 
transform by attaching the boundaries of the
building-block of the periodic-end, that consists of a
compact $4$-manifold. The analytic setting gives a Fredholm theory of the linearized operators
of the SW equations with respect to the end-periodic metric.
Even though a small perturbation 
can be applied to the original end-periodic metric
and still the Fredholm property is preserved, 
the analytic mechanism essentially requires
existence of  an end-periodic metric.
By contrast,  in our condition of quasi-periodicity,
we have used two properties of metrics
that touch essentially on uniformity of estimates
(see Remark \ref{q-cyl}).
Hence,  it would be quite difficult to extend 
the techniques of end-periodic case and apply it to 
our case.
In particular,
it is not easy to construct moduli theory for any wider classes of open Riemannian $4$-manifolds
such as the quasi-cylindrical-end case.
In such situations,  the de Rham differentials do not have closed range in general and so the 
standard Fredholm property breaks.
See Section $5$ for a partial  construction of  a 
new  functional analytic setting in  a non-standard way.

The quasi-cylindricity concerns existence of
asymptotically
smooth exhaustion by  a compact building-block subset,
whose overlap widths are isometric.
It would be of interest for us to ask whether an exotic
$R^4$ could admit the asymptotically
smooth exhaustion by  a compact building-block  subset
in our sense.

Our main analytic tool is given by the following proposition.
Let $(X, g)$ be an oriented  complete Riemannian 
$4$-manifold, and 
 take an exhaustion
$K_1 \Subset K_2 \Subset \dots \Subset X$  by compact subsets.

We say that
an element $u \in L^2(X; \Lambda^*)$ is 
an $L^2${\em  harmonic  form}, if it satisfies the equations
$
du= d^*u =0$.
In Corollary \ref{stokes}, we verify the following 
property.

\begin{theorem}\label{l^2-exact}
Suppose a non-zero  $L^2 $ harmonic self-dual $2$ form
 $0 \ne u \in \mathcal H^+_e(X; \R)$ exists, which is $L^2$ exact at infinity.

Then there is no  family $a_i \in  \Omega^1(K_i)$ such that

\vspace{2mm}

$(1)$ 
convergence
$$d^+(a_i) \to u$$
holds   in  $L^2$  on each compact subset, and 

\vspace{2mm}

$(2)$ the uniform bound 
$$||d(a_i)||_{L^{2}(K_i)}  \leq C < \infty$$
holds.
\end{theorem}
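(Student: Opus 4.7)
The plan is a proof by contradiction. Assume such a family $\{a_i\}$ exists; I will deduce $\|u\|_{L^2}^2=0$, contradicting $u\ne 0$ (since $u$ is self-dual, $u\wedge u=|u|^2\,\vol$ pointwise). By hypothesis there are a compact $K\subset X$ and $\alpha\in\Omega^1(X\setminus K)$ with $u|_{X\setminus K}=d\alpha$ and $\|\alpha\|_{L^2(X\setminus K)}<\infty$; the whole argument will exploit this $\alpha$ together with the uniform bound on $\|d a_i\|_{L^2(K_i)}$.

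First I fix cutoffs. Since $(X,g)$ is complete, smooth the Lipschitz distance from a basepoint to a proper $\tilde\rho\in C^\infty(X)$ with $|d\tilde\rho|\leq 2$, and put $\chi_k=\eta_k\circ\tilde\rho$, where $\eta_k:[0,\infty)\to[0,1]$ equals $1$ on $[0,k]$, $0$ on $[k+1,\infty)$, with $|\eta_k'|\leq 2$. Each $\chi_k$ is compactly supported, $\chi_k\nearrow 1$ pointwise, $|d\chi_k|\leq M$ for a constant $M$ independent of $k$, and the shell $S_k:=\supp(d\chi_k)$ lies in $X\setminus K$ for all $k\geq k_0$.

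Next I carry out a double integration by parts. Fix $k\geq k_0$ and pick $i$ so large that $\supp(\chi_k)\subset\mathring{K_i}$; then $\chi_k a_i$ extends by zero to a compactly supported $1$-form on $X$. Two Stokes identities (using $du=0$ in the first, and compact support of the integrand $\alpha\wedge d\chi_k\wedge a_i$ in $S_k\subset X\setminus K$ in the second) give
\[
0=\int_X u\wedge d\chi_k\wedge a_i+\int_X \chi_k\,u\wedge d a_i,
\]
\[
0=\int_X d\alpha\wedge d\chi_k\wedge a_i+\int_X \alpha\wedge d\chi_k\wedge d a_i.
\]
Since $u=d\alpha$ on $S_k$, combining them yields the key identity
\[
\int_X \chi_k\,u\wedge d a_i \;=\; \int_X \alpha\wedge d\chi_k\wedge d a_i.
\]

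Finally I pass to the limits. By self-duality of $u$, $u\wedge d a_i=\langle u,d^+ a_i\rangle\,\vol$ pointwise; hypothesis $(1)$ and compactness of $\supp(\chi_k)$ give $\int_X \chi_k\,u\wedge d a_i\to\int_X \chi_k|u|^2\,\vol$ as $i\to\infty$. Cauchy--Schwarz together with hypothesis $(2)$ bounds the right-hand side of the key identity uniformly in $i$ by
\[
\left|\int_X \alpha\wedge d\chi_k\wedge d a_i\right|\leq M\,\|\alpha\|_{L^2(S_k)}\,\|d a_i\|_{L^2(K_i)}\leq MC\,\|\alpha\|_{L^2(S_k)}.
\]
Hence $\int_X \chi_k|u|^2\,\vol\leq MC\,\|\alpha\|_{L^2(S_k)}$; letting $k\to\infty$ the left side tends to $\|u\|^2_{L^2}$ by monotone convergence, while the right tends to $0$ because $\alpha\in L^2(X\setminus K)$ and $S_k$ escapes every compact set. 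This forces $u=0$, a contradiction. The crux is the second integration by parts: it trades $\alpha$ (controlled only in $L^2$) for $d a_i$ (uniformly $L^2$-bounded by hypothesis $(2)$), so both factors of the final Cauchy--Schwarz estimate remain controllable in the limit. Without this device one would need to bound $\|a_i\|_{L^2(S_k)}$, for which the hypotheses provide no information.
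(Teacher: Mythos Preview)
Your proof is correct and follows essentially the same route as the paper's argument (Theorem~\ref{$L^1$} specialized to $p=q=2$, i.e.\ Corollary~\ref{stokes}): two integrations by parts reduce $\int \chi\, u\wedge d a_i$ to $\int d\chi\wedge\alpha\wedge d a_i$, and a Cauchy--Schwarz estimate on this last integral yields the contradiction. The only organizational difference is that the paper ties the cut-offs to the exhaustion (a single index $i$, with $\|d\chi_i\|_{L^\infty}\to 0$ supplying the smallness), whereas you decouple the cut-off index $k$ from the exhaustion index $i$, keep $|d\chi_k|$ merely bounded, and extract smallness instead from $\|\alpha\|_{L^2(S_k)}\to 0$; either device works and they are interchangeable here.
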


Let us consider a basic case where 
 $(X,g)$ is  a Riemannian $4$-manifold with cylindrical-end
so that there is an isometry 
$\text{end}(X) \cong  Y \times [0, \infty) $, 
where $Y$ is a compact oriented Riemannian  three-manifold.
The following Lemma is well known (see  Proposition \ref{ahs-exact}).
\begin{lemma}\label{sd-coh}
Assume that $Y$ is a rational homology sphere.
Then the following spaces of $(X,g)$ are all isomorphic:
\begin{itemize}
\item
 The unreduced self-dual  $L^2$ cohomology group.
 
 \item
The reduced self-dual $L^2$ cohomology group.

\item
The space of self-dual $L^2$ harmonic forms.

\item
The self-dual de Rham cohomology group.
\end{itemize}
\end{lemma}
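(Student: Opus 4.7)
The plan is to exploit the cylindrical end via eigenfunction decomposition along $Y$, and then use the resulting exponential decay as the main tool. On $Y \times [0,\infty)$ any harmonic form $\eta$ expands as $\sum_\lambda e^{\pm\sqrt{\lambda}\,t}$ times eigenforms of $\Delta_Y$, and the $L^2$ condition forces the zero-mode coefficients to vanish. Because $Y$ is a rational homology sphere, $H^1(Y;\R) = H^2(Y;\R) = 0$, so there are no harmonic $1$- or $2$-forms on $Y$; consequently every $L^2$ harmonic $2$-form on $X$ decays exponentially along the end. I would treat this decay as the single most useful fact in the rest of the argument.

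The identification of the space of self-dual $L^2$ harmonic forms with the reduced self-dual $L^2$ cohomology is the standard $L^2$ Hodge decomposition on a complete Riemannian manifold, and I would invoke it directly. To pass from reduced to unreduced $L^2$ cohomology I would apply APS-type Fredholm theory to $d+d^*$ on the truncations $X_T := X_0 \cup Y \times [0,T]$ with APS boundary conditions: the rational homology sphere hypothesis keeps all indicial roots nonzero in degrees $1$ and $2$, and this gives closed range for $d$, which is precisely what is needed to identify reduced and unreduced cohomology.

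To match with the self-dual de Rham cohomology, I would use the natural map sending an $L^2$ harmonic self-dual form to its de Rham class. For injectivity, if $u = d\alpha$ globally, then Stokes' theorem on $X_T$ gives $\|u\|_{L^2(X_T)}^2 = \langle \alpha, d^*u\rangle_{X_T} + B_T$, where $B_T$ is a boundary integral over $Y \times \{T\}$; the exponential decay of $u$ kills $B_T$ as $T \to \infty$, forcing $u = 0$. For surjectivity, because $X$ deformation retracts to $X_0$ the group $H^2(X) \cong H^2(X_0)$ is finite-dimensional, so I would represent a class by a compactly supported closed form and orthogonally project onto $L^2$ harmonics using the closed-range property just established, finally taking the self-dual part.

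The delicate step will be the closed-range input. Everything else --- the decay estimate, the integration by parts, and the self-dual projection --- reduces to routine computations once this is available. The rational homology sphere assumption is exactly what makes it go through: it keeps the indicial operator of $d+d^*$ invertible in the relevant degrees, and at the topological level it collapses the long exact sequence of the pair $(X_0, Y)$ into the isomorphism $H^2(X_0, Y) \cong H^2(X_0)$ that underlies the whole comparison.
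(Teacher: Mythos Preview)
Your approach is correct in outline and shares the paper's central mechanism --- exponential decay along the cylinder forced by the rational homology sphere hypothesis --- but the technical route differs. The paper (in Proposition~\ref{ahs-exact}, to which this lemma is referred) does not expand harmonic forms in $\Delta_Y$-eigenmodes. Instead it writes a self-dual form on the end as $u = d\mu$ (using $H^2(Y;\R) = 0$), decomposes $\mu = \beta_t + f_t\,dt$, and shows that the co-closed component $\beta^2_t$ satisfies the first-order equation $(\partial_t + *_3 d_3)\beta^2_t = 0$ with $*_3 d_3$ invertible on $(\ker d_3)^\perp$; the decay rate is read off from the spectrum of $*_3 d_3$ rather than $\sqrt{\Delta_Y}$, though the two are of course related. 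The closed-range input is likewise packaged differently: the paper works in \emph{weighted} Sobolev spaces $L^2_{k,\mu}$ on the whole of $X$ rather than invoking APS boundary conditions on truncations $X_T$. Both routes yield the same Fredholm conclusion, but the weighted formulation is what the paper actually uses downstream.

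One point to tighten: your de Rham comparison treats the target as $H^2(X;\R)$, but the ``self-dual de Rham cohomology'' in the statement is the cokernel of $d^+ : \Omega^1(X) \to \Omega^+(X)$, not a subspace of $H^2$. A self-dual harmonic form $u$ represents zero there when $u = d^+a$ for some smooth $a$, which is not the same as $u = da$; your Stokes argument addresses the latter. In the cylindrical-end setting with $b_1(Y)=0$ the distinction ultimately disappears (both groups have dimension $b^+$ of the compact core), but the injectivity and surjectivity arguments should be routed through $d^+$ rather than $d$ to match the statement as written.
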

See Definition \ref{l^2-cohomology} below.
Recall that 
for a compact oriented Riemannian $4$-manifold $M$,
the self-dual de Rham cohomology group
is defined by the cokernel of 
$   d: \Omega^1(M) \to \Omega^+(M) $
in $\Omega^+(M)$, where $\Omega^+(M)$ is the space of 
self-dual smooth $2$-forms and
$\Omega^1(M)$ is the space of 
smooth $1$-forms on $M$.

Our example of the pair  $(X,X')$ 
  in Theorem \ref{main} satisfies the following properties.
  
  \begin{itemize}
  
  \item
 Both $X$ and $X'$
 can be smoothly embedded into 
a compact smooth $4$-manifold 
$S: = \ S^2 \times S^2  \  \sharp  \ S^2 \times \ S^2 \ \sharp  \ S^2 \times S^2$.

\item
 $X' $ is given by the complement of one point
$X' := S \backslash \text{pt}$. 

\item
 There is 
a closed set that is homeomorphic to the $4$-dimensional closed disc $D$ with
$X:= S \backslash D$.
\end{itemize}

 Let us equip  $X'$ above with 
 a cylindrical-end metric $g'$,
 and verify that $(X',g')$  satisfies the required properties  in Theorem \ref{main}.

\begin{lemma}\cite{gromov and lawson1}\label{GL1}
Let $N,N'$ be  compact manifolds of dimension
$n \geq 3$. 
Assume  they  admit  metrics of positive scalar curvature.
Then,

$(1)$ their connected sum $N \sharp N'$ also admits a metric of positive scalar curvature, and 

$(2)$ $N \backslash \text{pt}$ also admits a 
cylindrical-end  metric of positive scalar curvature.
\end{lemma}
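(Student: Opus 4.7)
My plan is to follow the classical Gromov-Lawson surgery/bending construction, deducing (1) from (2) and then producing the cylindrical end via a warped-product bending near the removed point. For the reduction: given cylindrical-end PSC metrics on $N\setminus\{p\}$ and $N'\setminus\{p'\}$ whose ends are both isometric to the same round cylinder $S^{n-1}\times[0,\infty)$ with product metric $dt^2+\varepsilon^2 g_{\text{round}}$, observe that this product has scalar curvature $(n-1)(n-2)/\varepsilon^2>0$ since $n\geq 3$. Truncate both ends at some $t=T$ and identify the boundary cross-sections by the identity on $S^{n-1}$; the resulting metric is smooth (the product form is the same on both sides) and PSC, and the underlying manifold is diffeomorphic to $N\sharp N'$.

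For (2), fix $p\in N$ and work in geodesic polar coordinates $(r,\theta)\in[0,r_0)\times S^{n-1}$ around $p$, in which $g = dr^2 + h_r$ with $h_r = r^2 g_{\text{round}} + O(r^4)$. Since PSC is an open condition on the space of metrics, for $r_1$ sufficiently small I may first perturb $g$ in a thin annular collar near $r=r_1$ so that the induced boundary metric $h_{r_1}$ becomes exactly $r_1^2 g_{\text{round}}$, without losing positivity. I then excise $B(p,r_1)$ and attach a cylindrical-end piece of the warped-product form
\[
\tilde g = ds^2 + f(s)^2 g_{\text{round}}
\]
on $[0,\infty)\times S^{n-1}$, with $f(0)=r_1$ and appropriate boundary-matching conditions on $f'(0)$ ensuring the glued metric is $C^\infty$, and with $f(s)\equiv\varepsilon$ for $s\geq T$.

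The central computation is the scalar curvature formula for a warped product over the round sphere:
\[
R_{\tilde g} = -2(n-1)\frac{f''}{f} + (n-1)(n-2)\frac{1-(f')^2}{f^2}.
\]
On the constant portion $s\geq T$ this reduces to $(n-1)(n-2)/\varepsilon^2>0$. The task is to choose a profile $f$ on the bending interval $[0,T]$ so that $R_{\tilde g}>0$ throughout. The classical solution is to represent the construction by a concave curve $(r(s),f(s))$ in the half-plane $\{f>0\}$, tangent to the line $f=r$ at the start and horizontal at the end; along such a curve $|f'|\leq 1$, so the sphere-factor term is nonnegative, and by spreading the bend over a long interval one makes $|f''|/f$ as small as desired compared to the dominating $1/f^2$ term.

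The main obstacle is the quantitative bending lemma that realizes this picture: one must construct a smooth $f$ interpolating between the matching data at $s=0$ and $f\equiv\varepsilon$ for $s\geq T$ while keeping the sign-indefinite first term dominated by the second. Two degrees of freedom make this possible. First, $\varepsilon$ is free, and the positive term scales like $\varepsilon^{-2}$, so for sufficiently small $\varepsilon$ the positive contribution swamps any bending loss wherever $f$ is small. Second, the coefficient $n-2\geq 1$ (which is exactly why $n\geq 3$ is essential, and why the lemma fails for $n=2$) ensures the sphere-factor contribution is present at all. Concatenating the bending region with the half-infinite round cylinder $\{s\geq T\}$ and gluing to $N\setminus B(p,r_1)$ produces the desired cylindrical-end PSC metric on $N\setminus\{p\}$.
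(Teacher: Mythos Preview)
The paper itself gives no argument; its proof is the single line ``See \cite{gromov and lawson1} page 425--429.'' Your sketch does follow the Gromov--Lawson bending philosophy, the warped-product scalar-curvature formula is correct, and the deduction of (1) from (2) by truncating and gluing two half-cylinders is fine.

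There is, however, a real gap in your construction for (2). You write $g = dr^2 + h_r$ in geodesic polar coordinates, perturb so that the single slice $h_{r_1}$ is round, and then glue a warped product $ds^2 + f(s)^2 g_{\mathrm{round}}$ along $\{r=r_1\}$, matching $f(0)$ and $f'(0)$. But matching the boundary metric and one normal derivative does not yield a $C^\infty$ (or even $C^2$) metric across the seam: on the interior side the metric is still $dr^2 + h_r$ with $h_r$ \emph{not} a scalar multiple of $g_{\mathrm{round}}$ for $r$ near $r_1$, so the tangential part carries transverse derivatives that no rotationally symmetric warped product can match. Perturbing further to force $g$ to be an honest warped product on an annulus is not obviously PSC-preserving, since the nearby rotationally symmetric model (a flat cone) has scalar curvature zero. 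This is precisely the obstacle the actual Gromov--Lawson argument is built to handle: rather than assume rotational symmetry, they realize the transition region as a hypersurface $\{(x,t): (r(x),t)\in\gamma\}$ in the Riemannian product $N\times\R$ and compute the induced scalar curvature via the Gauss equation. The dominant positive term comes from the principal curvatures $\sim 1/r$ of the small geodesic spheres in $N$, which are present whether or not $g$ is a warped product. Your computation is the rotationally symmetric special case of theirs and captures the right mechanism, but as written the gluing step fails; you need the hypersurface argument (or an equivalent device that avoids symmetrizing $g$) to close it.
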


\begin{proof}
See \cite{gromov and lawson1} page $425- 429$.
\end{proof}

 $S^2 \times S^2$ admits a metric of positive scalar curvature.
Hence 
$S =S^2 \times S^2 \ \sharp \ S^2 \times S^2 \ \sharp \ S^2 \times S^2$ also admits a
metric   of positive scalar curvature by Lemma \ref{GL1} $(1)$.
Then $X' : = S \backslash \text{pt}$ admits a cylindrical-end metric $g'$ of positive scalar curvature
by Lemma \ref{GL1} $(2)$.

Since   the self-dual de Rham cohomology group on $S$
  is non-zero, 
the self-dual   $L^2$ cohomology group on $X'$
is also non zero by Lemma \ref{sd-coh}.
It follows from Proposition \ref{ahs-exact} that any self-dual $L^2$ harmonic form 
on a cylindrical-end  $4$-manifold  
is $L^2$ exact at infinity, 
if the cross section is a rational homology sphere.
Thus with Lemma \ref{cylindrical}, we have verified that $(X',g')$ posseses the required properties  in Theorem \ref{main}.

\vspace{2mm}

Let us roughly describe  our strategy for  the rest of the proof of Theorem \ref{main}.
It is well known that the Seiberg-Witten invariant is invariant under any  choice of generic Riemannian metrics.
In particular, a solution exists for any metric, if the invariant is non-zero.
Let $M$ be the $K3$ surface. 
It  satisfies two remarkable properties:

$(1)$
It admits a spin structure and the SW invariant is non-zero with respect to the spin structure (see \cite{morgan}).

$(2)$ $M$ contains an open subset $X \subset M$ that is diffeomorphic to
$S  \backslash D$ as above (see \cite{freed and uhlenbeck}).

The second property arises from 
Casson-Freedman theory \cite{freedman}, which has
a very  different aspect from the former one.

Our argument uses
a family of Riemannian metrics on $M$ that converges to a complete 
Riemannian metric $g$ on $X$ on each compact subset.
There is a family of perturbed SW solutions with respect to these metrics, 
and we study the asymptotic behaviour of this family of solutions.
We apply  the following idea.
Let us  choose an 
exhaustion  $K_0 \Subset K_1 \Subset  \dots  \Subset X$
by compact subsets with a family of Riemannian metrics $h_i$ on $M$ with
$h_i |K_i =g|K_i$.
Since the SW invariant is non zero, there are solutions to the perturbed SW equation with respect to $h_i$.
Passing through a limiting procedure, one should be able to obtain a solution to the perturbed SW equation
over $(X,g)$.
However $L^2$ harmonic theory excludes such a situation.

Because our argument is quite general, we can obtain more examples which satisfy the 
 conclusion of Theorem \ref{main}
  for any simply connected spin $4$-manifold $M$ 
with a non-zero Seiberg-Witten invariant with respect to the spin structure.

The prototype of the argument of such a limiting  process was given for  the class of manifolds with cylindrical-end.
In particular, one can verify the fact that a $K3$ surface does not admit
smoothly connected-sum decomposition in which
the homology of  
 one side corresponds to  the $E_8$-summand
 of $H_2(M;\Z)$    \cite{donaldson and kronheimer}.
This result  is based on the construction of moduli theory over  
cylindrical-end $4$-manifolds.
If one tries to apply the same argument  for   more
 general classes  of open Riemannian $4$-manifolds, 
  a striking difficulty appears that at limit, the solution is
  generally
 far from  $L^2$.
This essentially comes from the fact
that   the $L^2$ de Rham differential 
does not have closed range in general.
 However, as far as we know, our result is first for 
 a metric property in the situation where even linear Fredholm theory   cannot be applied.

\section{$L^2$ harmonic forms}
Let $(X,g)$ be a complete Riemannian $4$-manifold.

\subsection{De Rham differential}
We start by  observing the following basic  property.
For simplicity of the argument, we assume  that end$(X) $ is homeomorphic to
$[0, \infty) \times S^3$.
Let $H^*_c(X; {\mathbb R})$ be the de Rham cohomology with compact support.
We also use the notation $\Omega^*(X) : =  C^{\infty}(X; \Lambda^*)$.
If $X_0$ is a manifold with boundary, then 
 $\Omega_c^p(X_0)$ is
the space of compactly supported smooth $p$-forms that
 vanish on the boundary.

\begin{lemma}\label{as-exact}
Suppose  that an element  $ [u] \in H^2_c(X; {\mathbb R})$ satisfies the
positivity condition
 $\int_X \ u \wedge u > 0$.
Then there are no  families $a_l \in \Omega^1(X)$ such that
convergence
$$d(a_l) \to u$$
holds in $C^{\infty}$
on each compact subset.
\end{lemma}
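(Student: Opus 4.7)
The plan is to derive a contradiction via Stokes' theorem, exploiting the fact that $u$ has compact support (and is therefore closed with a well-defined restriction to the interior of any compact manifold with boundary that contains its support).

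First I would choose a compact submanifold with boundary $X_0 \subset X$ such that $\supp(u) \subset \mathring{X_0}$ and $\partial X_0$ lies outside $\supp(u)$; this is possible because $u$ is compactly supported and $X$ is a manifold. In particular $u$ vanishes identically in a neighbourhood of $\partial X_0$, and
\[
\int_X u \wedge u \;=\; \int_{X_0} u \wedge u \;>\; 0.
\]

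Next, for each $l$, I would split
\[
\int_{X_0} u \wedge u \;=\; \int_{X_0} u \wedge d(a_l) \;+\; \int_{X_0} u \wedge \bigl(u - d(a_l)\bigr).
\]
Since $u$ is a closed $2$-form, the Leibniz rule gives $u \wedge d(a_l) = d(u \wedge a_l)$, so Stokes' theorem yields
\[
\int_{X_0} u \wedge d(a_l) \;=\; \int_{\partial X_0} u \wedge a_l \;=\; 0,
\]
where the last equality uses that $u|_{\partial X_0} = 0$ by our choice of $X_0$. On the other hand, the hypothesis $d(a_l) \to u$ in $C^\infty$ on compact subsets guarantees uniform convergence on $X_0$, so the second term tends to $0$ as $l \to \infty$. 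Letting $l \to \infty$ therefore forces $\int_{X_0} u \wedge u = 0$, contradicting positivity.

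The argument is essentially soft; there is no real obstacle beyond making sure the boundary term vanishes, which is immediate from compact support. I would expect the author's proof, stated as a lemma of this kind, to be a slightly dressed-up version of the same Stokes argument, with $X_0$ possibly chosen as the complement of a small collar in the end $[0,\infty) \times S^3$ so as to emphasise the asymptotic aspect. The only place care is needed is to insist that the convergence $d(a_l) \to u$ is in $C^\infty$ (or at least $L^2$) on the fixed compact set $X_0$, so that passage to the limit inside $\int_{X_0} u \wedge \cdot$ is justified.
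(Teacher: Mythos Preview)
Your argument is correct, but it follows a different route from the paper's proof. The paper invokes Poincar\'e duality: it chooses an embedded closed Riemann surface $\Sigma \subset X$ representing the Poincar\'e dual of $[u]$, so that $\int_X u \wedge u = \int_\Sigma u$, and then applies Stokes' theorem on the \emph{closed} surface $\Sigma$ to get $\int_\Sigma d(a_l) = 0$, forcing $\int_\Sigma u = \int_\Sigma (u - d(a_l)) \to 0$. Your version instead integrates $u \wedge u$ directly over a compact $4$-dimensional region $X_0$ containing $\supp(u)$ and kills the Stokes boundary term using $u|_{\partial X_0} = 0$. Your approach is more elementary---it avoids Poincar\'e duality and the (nontrivial) fact that the dual class is represented by an embedded surface---while the paper's version has the geometric virtue of reducing the pairing to an intersection-type integral over a $2$-cycle, which foreshadows the later arguments where one pairs $u$ against $d^+ a_i$ rather than $da_i$.
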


\begin{proof}
Consider
 an embedded Riemann surface $\Sigma \subset X$ which 
represents a Poincar\'e dual  class to $u$ (see \cite{bott and tu}, page $44$).
Suppose such a family $\{a_l\}_l$ exists.
Then by Stokes' theorem, the convergence
\begin{align*}
0 & < \int_X u \wedge u = \int_{\Sigma}  u \\
& = \int_{\Sigma}(u -d(a_l)) + \int_{\Sigma} d(a_l)
 = \int_{\Sigma}(u -d(a_l)) \to  0
\end{align*}
must hold, which cannot happen.
\end{proof}

Let
\[
d^+: L^2_1(X; \Lambda^1) \to L^2(X; \Lambda^+)
\]
be the composition of the differential with  the projection of  two forms to the self-dual part.
We refer to this  as the self-dual differential.
The above argument heavily depends on the Stokes theorem, and 
it cannot be directly applied to the self-dual differential in general.
However,  a parallel argument can still work for
a certain $L^2$ harmonic form.

An element $u \in L^2(X; \Lambda^+)$ is called 
an $L^2${\em  harmonic self-dual $2$ form}, if it satisfies the equations
\[
du= d^*u =0.
\]
One can obtain $L^2$ harmonic self-dual $2$-forms in the following way.
\begin{lemma} Let $k \geq 1$.
Suppose $d^+: L^2_k(X; \Lambda^1) \to L^2_{k-1}(X; \Lambda^+)$
has closed range. Then, any element in the co-kernel space can be represented by
an $L^2$ harmonic self-dual $2$-form.
\end{lemma}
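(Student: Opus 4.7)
The approach combines a standard Hilbert space orthogonal decomposition with elliptic regularity. Since $d^+\colon L^2_k(X;\Lambda^1)\to L^2_{k-1}(X;\Lambda^+)$ has closed range, the target decomposes orthogonally as $L^2_{k-1}(X;\Lambda^+)=\op{Im}(d^+)\oplus\op{Im}(d^+)^{\perp}$, so the cokernel is canonically identified with $\op{Im}(d^+)^{\perp}$. Each cokernel class thus has a unique representative $v$ in this orthogonal complement, and it suffices to show that such a $v$ agrees a.e.\ with an $L^2$ harmonic self-dual $2$-form.

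For any compactly supported smooth test $1$-form $\alpha\in\Omega^1_c(X)\subset L^2_k(X;\Lambda^1)$, the orthogonality condition gives $\langle v,d^+\alpha\rangle=0$. Integration by parts, which is legitimate because $\alpha$ has compact support, converts this into the distributional identity $(d^+)^*v=0$. On self-dual 2-forms the formal adjoint satisfies $(d^+)^*=d^*$, so $d^*v=0$ weakly. Because $v$ is self-dual ($*v=v$) and $d^*=-{*}d{*}$ on 2-forms in dimension four, one gets $d^*v=-{*}dv$, whence $dv=0$ weakly as well. Thus $v$ is an $L^2$ weak solution of the first-order elliptic equation $(d+d^*)v=0$, and elliptic regularity upgrades $v$ to a smooth, genuinely $L^2$ harmonic self-dual $2$-form.

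The main subtlety I foresee is that the orthogonal complement above is taken with respect to the $L^2_{k-1}$ inner product, whereas the weak identities needed for elliptic regularity live naturally on the $L^2$ side. One way to bridge this is to start from any representative $u\in L^2_{k-1}\subset L^2$ of the cokernel class and project in $L^2$ onto the closed subspace of $L^2$ harmonic self-dual forms (closed in $L^2$ as the kernel of a closable elliptic operator). The closed-range hypothesis, together with standard elliptic estimates, should ensure that the discarded component of $u$ lies in $\overline{\op{Im}(d^+)}^{L^2}\cap L^2_{k-1}=\op{Im}(d^+)$, so the projection represents the same cokernel class, completing the argument.
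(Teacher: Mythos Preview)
The paper states this lemma without proof, so there is no argument there to compare against. Your overall strategy is the natural one and, crucially, you correctly identify the real obstacle: for $k\ge 2$, the orthogonal complement of $\op{Im}(d^+)$ is taken in the $L^2_{k-1}$ inner product, so $\langle v,d^+\alpha\rangle_{L^2_{k-1}}=0$ does \emph{not} yield the distributional identity $(d^+)^*v=0$ (it gives instead an equation of the shape $(d^+)^*(1+\nabla^*\nabla)^{k-1}v=0$). For $k=1$ your first paragraph is already a complete proof, since the relevant inner product is the $L^2$ one and the argument goes through verbatim.

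Your proposed fix for $k\ge 2$---project $u\in L^2_{k-1}\subset L^2$ onto $\mathcal H^+(X,g)$ in the $L^2$ inner product---is the right idea, but two points are left unjustified. First, the harmonic projection $h$ is smooth by interior elliptic regularity, yet on a complete non-compact manifold with no bounded-geometry hypothesis (and the paper explicitly avoids this assumption later) there is no a priori reason that $h\in L^2_{k-1}$, which you need for $h$ to represent a class in $L^2_{k-1}/\op{Im}(d^+)$. Second, the asserted equality $\overline{\op{Im}(d^+)}^{L^2}\cap L^2_{k-1}=\op{Im}(d^+:L^2_k\to L^2_{k-1})$ is not a formal consequence of the closed-range hypothesis: it requires both that $d^+:L^2_1\to L^2$ also has closed range and that any $w=d^+b\in L^2_{k-1}$ with $b\in L^2_1$ can be rewritten as $d^+a$ with $a\in L^2_k$---a global regularity statement that is routine under bounded geometry but not otherwise automatic. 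The cleanest repair is to establish the $k=1$ case directly (which you have essentially done) and then argue separately that $\coker(d^+:L^2_k\to L^2_{k-1})$ is independent of $k\ge 1$; that independence is precisely where the ``standard elliptic estimates'' you invoke actually enter, and it deserves its own statement and proof.
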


Note that  $d^+$ does not always have closed range if 
$(X,g)$ is non-compact.

\begin{definition}\label{l^2-cohomology}
$(1)$  The unreduced 
self-dual $L^2$ cohomology group  is given  by
$
L^2(X; \Lambda^+)
/
d^+(L^2_1(X; \Lambda^1))
.$

$(2)$ 
The reduced self-dual $L^2$ cohomology group  
 is given by
\[H^+(X,g) : =
L^2(X; \Lambda^+)
/
\overline{d^+(L^2_1(X; \Lambda^1))}
\]
where $\bar{\quad} $ is the closure.

$(3)$
We denote by $\mathcal H^+(X, g)$ 
 the space of  $L^2$ harmonic self-dual $2$-forms.
\end{definition}

\begin{lemma}\label{harmonic-repre}
The inclusion $\mathcal H^+(X, g) \hookrightarrow 
L^2(X; \Lambda^+)$ induces an isomorphism 
$\mathcal H^+(X, g) \cong H^+(X,g)$.
\end{lemma}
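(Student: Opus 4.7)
The plan is to establish the isomorphism via the orthogonal Hodge-type decomposition
\[
L^2(X;\Lambda^+) \;=\; \mathcal H^+(X,g) \;\oplus\; \overline{d^+(L^2_1(X;\Lambda^1))},
\]
so that the inclusion induces an isomorphism onto the quotient. Since both summands are closed subspaces of the Hilbert space $L^2(X;\Lambda^+)$, it suffices to identify $\mathcal H^+(X,g)$ with the orthogonal complement $V:=(\overline{d^+(L^2_1(X;\Lambda^1))})^{\perp}$. The only nontrivial geometric input will be the completeness of $(X,g)$, used to ensure that compactly supported smooth forms are dense in $L^2_1$ and, equivalently, that integration by parts produces no boundary contributions.

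First I would show $\mathcal H^+(X,g) \subset V$. Given $u \in \mathcal H^+(X,g)$ and $\alpha \in C^\infty_c(X;\Lambda^1)$, use self-duality of $u$ and integration by parts:
\[
\langle u, d^+\alpha\rangle_{L^2} \;=\; \langle u, d\alpha\rangle_{L^2} \;=\; \langle d^*u,\alpha\rangle_{L^2}\;=\;0.
\]
By completeness of $(X,g)$ the space $C^\infty_c(X;\Lambda^1)$ is dense in $L^2_1(X;\Lambda^1)$, so $u \perp d^+(L^2_1(X;\Lambda^1))$, hence $u \in V$.

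Next I would show the reverse inclusion $V \subset \mathcal H^+(X,g)$. If $u \in V$, then testing against $\alpha \in C^\infty_c(X;\Lambda^1)$ and using that $u$ is self-dual gives $\langle u,d\alpha\rangle = \langle u, d^+\alpha\rangle = 0$, so $d^*u = 0$ in the distributional sense. For a self-dual $2$-form in dimension $4$, the relation $d^* = -*d*$ combined with $*u=u$ yields $d^*u = -*du$, so $du=0$ distributionally as well. Since the Hodge Laplacian is elliptic, standard elliptic regularity promotes $u$ to a smooth form satisfying $du=d^*u=0$ pointwise, i.e.\ $u \in \mathcal H^+(X,g)$.

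With these two inclusions, $\mathcal H^+(X,g)=V$ is the orthogonal complement of $\overline{d^+(L^2_1(X;\Lambda^1))}$, and the orthogonal projection identifies it with the quotient $H^+(X,g)$. The main delicacy — and the point where completeness is essential — is the passage from orthogonality against $C^\infty_c$ to the full weak equations $du=d^*u=0$, together with the density $C^\infty_c(X;\Lambda^1) \subset L^2_1(X;\Lambda^1)$; without completeness these steps could fail because of missing boundary terms at the (conceptual) boundary at infinity. Everything else is formal Hilbert-space geometry and standard elliptic regularity for the Hodge Laplacian.
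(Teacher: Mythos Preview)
Your proof is correct and is precisely the standard argument the paper alludes to: the paper's own proof consists only of the sentence ``This is well known.'' Your orthogonal decomposition via identifying $\mathcal H^+(X,g)$ with $\bigl(\overline{d^+(L^2_1)}\bigr)^{\perp}$, together with the completeness hypothesis to justify density of $C^\infty_c$ and the self-duality identity $d^*u=-*du$, is exactly the expected route.
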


\begin{proof}
This is well known.
\end{proof}

\subsection{Asymptotics of   the  differential image}
Let us introduce 
a method of cut-off  function, whose idea has appeared in \cite{gromov}.
The author is thankful  to M. Furuta for discussion on how
to use a family of a cut-off functions, instead of
boundary integrals.

Let $K_i \Subset K_{i+1} \Subset \dots \Subset X$ be an exhaustion by compact subsets,
 and take cut-off functions
$$\chi_i: X \to [0,1]$$
with $\chi_i|K_{i-1} \equiv 1$ and $\chi_i|(K_{i})^c \equiv 0$ such
that
$$\lim_{i \to \infty} \ ||d \chi_i ||_{L^{\infty}(X)} =0$$
holds. 
Such a family of cut-off functions  exists when $(X,g)$ is non-compact and complete.

\begin{lemma}\label{$L^2$-bound}
Suppose a non-zero  $L^2$ harmonic self-dual $2$-form 
 $$0 \ne u \in \mathcal H^+(X; \R)$$ 
exists.
Then, there is no sequence $a_i  \in \Omega^1(K_i)$
with uniform bound
$$||a_i||_{L^2_1(K_i)} \leq c < \infty$$
such that 
convergence 
$$d^+(a_l) \to u=u^+$$
holds   in  $L^2$ norm
 on each compact subset.
\end{lemma}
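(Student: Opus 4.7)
The plan is to derive a contradiction by testing the asymptotic relation $d^+(a_i) \to u$ against $u$ itself, using the cut-off functions $\chi_i$ to promote each $a_i \in \Omega^1(K_i)$ to a compactly supported smooth $1$-form $\chi_i a_i$ on $X$, for which Stokes' theorem applies globally without any boundary term. This is precisely the method hinted at in the remark crediting M.~Furuta: one trades boundary integrals for a family of cut-offs.

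The first step exploits two features of $u$. Because $u$ is self-dual, for every smooth $2$-form $\omega$ one has the pointwise identity $\omega \wedge u = \langle \omega^+, u\rangle\,dV$. Because $u$ is harmonic, $du = 0$, hence $d(\chi_i a_i \wedge u) = d(\chi_i a_i)\wedge u$. Applying Stokes' theorem to the compactly supported form $\chi_i a_i \wedge u$ then gives
$$
0 \;=\; \int_X d(\chi_i a_i)\wedge u \;=\; \bigl\langle d^+(\chi_i a_i),\, u\bigr\rangle_{L^2(X)}.
$$

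The second step decomposes $d^+(\chi_i a_i) = \chi_i\, d^+(a_i) + (d\chi_i \wedge a_i)^+$ and analyzes each term. For the cross term, Cauchy-Schwarz and the uniform $L^2_1$ bound give
$$
\bigl|\langle (d\chi_i \wedge a_i)^+,\, u\rangle_{L^2}\bigr| \;\leq\; \|d\chi_i\|_{L^\infty}\,\|a_i\|_{L^2(K_i)}\,\|u\|_{L^2} \;\longrightarrow\; 0,
$$
since $\|d\chi_i\|_{L^\infty}\to 0$ by construction of the cut-offs. To evaluate the main term, I would fix a compact $K\subset X$; for $i$ large enough that $K\subset K_{i-1}$ one has $\chi_i\equiv 1$ on $K$, so
$$
\bigl\langle \chi_i d^+(a_i),\, u\bigr\rangle_{L^2} \;=\; \bigl\langle d^+(a_i),\, u\bigr\rangle_{L^2(K)} + \bigl\langle \chi_i d^+(a_i),\, u\bigr\rangle_{L^2(X\setminus K)}.
$$
The first summand converges to $\|u\|^2_{L^2(K)}$ by the compact-set convergence hypothesis, while the second is bounded in absolute value by $c\,\|u\|_{L^2(X\setminus K)}$ using the uniform $L^2$ bound on $d^+(a_i)$ supplied by the $L^2_1$ hypothesis. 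Letting $i\to\infty$ and then $K\uparrow X$ yields $0 \geq \|u\|^2_{L^2}$, contradicting $u \neq 0$.

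The main obstacle is justifying the exchange of limits in $i$ and in $K$: the hypothesis only provides convergence $d^+(a_i)\to u$ on compact subsets, so a priori one has no control of $\chi_i d^+(a_i)$ on the growing annular regions $K_i\setminus K$. The uniform $L^2_1$ bound on $a_i$ is exactly what supplies the uniform $L^2$ bound on $d^+(a_i)$ needed to dominate those tail pairings against the $L^2$ form $u$, whose own $L^2$-tails vanish under exhaustion; this interplay between the uniform Sobolev bound on $a_i$ and the $L^2$-integrability of $u$ is what makes the cut-off argument close.
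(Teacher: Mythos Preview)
Your argument is correct and follows essentially the same route as the paper's proof: both use the cut-offs $\chi_i$ to apply Stokes' theorem to $u\wedge \chi_i a_i$, exploit self-duality to replace $d$ by $d^+$, and close the argument by playing the uniform $L^2_1$ bound on $a_i$ against the $L^2$-tails of $u$. The only cosmetic difference is the order of presentation---the paper first shows $\int_{K_i} u\wedge d^+a_i$ is uniformly positive and then reduces it via Stokes to an annular term controlled by $\|u\|_{L^2(K_i\setminus K_{i-1})}\cdot\|a_i\|_{L^2_1}$, whereas you start from the Stokes identity $0=\langle d^+(\chi_i a_i),u\rangle$ and control the cross term via $\|d\chi_i\|_{L^\infty}\to 0$; both estimates are equivalent here.
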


\begin{remark}
One can replace $a_i  \in \Omega^1(K_i)$
by $a_i  \in \Omega^1_c(X)$ by using suitable cut-off functions, and the same conclusion holds
under the same conditions.
This is also the case in Theorem  \ref{$L^1$}
and Corollary \ref{stokes}
\end{remark}

\begin{proof}
{\bf Step 1:}
Suppose  the sequence  exists.
For any $\delta >0$, there is a compact subset $K \subset X$
such that $||u||_{L^2(K_i \backslash K)} \leq ||u||_{L^2(X \backslash K)} < \delta$ hold
for all large $i$.

By contrast,  there is $i_0$ such that for any $i \geq i_0$,
$$||u-d^+a_i||_{L^2(K)} < \delta$$
 also holds. Then, the following equalities hold:
 \begin{align*}
 \int_{K_i}  u \wedge d^+a_i & =
  \int_K u \wedge d^+a_i+ \int_{K_i \backslash K} u \wedge d^+a_i \\
  &  =  \int_K u \wedge ( d^+a_i -u)  + \int_{K} u \wedge u+ 
  \int_{K_i \backslash K} u \wedge d^+a_i \\
  & =  \int_K u \wedge ( d^+a_i -u)  + ||u||_{L^2(K)}^2
  + 
  \int_{K_i \backslash K} u \wedge d^+a_i .
  \end{align*}
 By the Cauchy-Schwartz  inequality, both
  the estimates 
\begin{align*}
&  | \int_K u \wedge ( d^+a_i -u) |
 \leq \delta ||u||_{L^2(K)} , \\
&  | \int_{K_i \backslash K} u \wedge d^+a_i| \leq 
 \delta ||d^+(a_i)||_{L^2(K_i \backslash K)}
 \end{align*}
 hold. Hence
 the following statement holds: for any  $\delta >0$, 
 there is $i_0$  and a compact subset $K \subset X$ such that
 for all $i \geq i_0$, the estimates 
 \begin{align*}
&  | \ \int_{K} \  u \wedge d^+a_i -  ||u||_{L^2(X)}^2 \ | \ < \  \delta, \\
& | \ \int_{K_i \backslash K} \  u \wedge d^+a_i | < \delta
\end{align*}
hold. Hence uniform positivity holds:
\[
 \int_{K_i}  u \wedge d^+a_i  >  ||u||_{L^2(K)}^2 - 2\delta >0.
 \]

 {\bf Step 2:}
 One may assume $K \subset K_{i-1}$ by choosing  large $i$.
 Then  consider the equalities
 \begin{align*}
  \int_{K_i} u \wedge d^+a_i & =  \int_{K_i} u \wedge da_i =
   \int_{K_i} u \wedge d(\chi_i a_i ) +  \int_{K_i} u \wedge d(1-\chi_i)a_i  \\  
  & =  \int_{K_i} d(u \wedge \chi_i a_i)
   +  \int_{K_i \backslash K_{i-1}} u \wedge d(1-\chi_i)a_i  \\
      & =  \int_{K_i \backslash K_{i-1}} u \wedge d(1-\chi_i)a_i .
  \end{align*}
  Then  the estimates  hold:
  \begin{align*}
  | \int_{K_i \backslash K_{i-1}} u \wedge d(1-\chi_i)a_i|  & \leq   
 \
 ||u ||_{L^2(K_i \backslash K_{i-1})} ||a_i||_{L^2_1(K_i \backslash K_{i-1})} \\
 & \leq  c  ||u ||_{L^2(K_i \backslash K_{i-1})} .
 \end{align*}
The right-hand side can be arbitrarily small as
 $u \in L^2(X; \Lambda^+)$.
 This  contradicts  Step $1$.
  \end{proof}

\begin{remark}
The condition on $a_i$ is too strong for our later purpose, and
in Corollary \ref{stokes} below, we use a weaker condition on $a_i$ 
assuming  a stronger one on $u$.
\end{remark}

\begin{lemma}\label{localization}
Suppose an $L^2$ harmonic self-dual $2$-form
 $u \in \mathcal H^+(X; \R)$ exists, which is 
 exact at infinity so that
 $u =d \alpha$ holds
 on the complement of  a compact subset $K \subset X$
 for some $\alpha \in \Omega^1(X \backslash K)$.

Then   any $a \in \Omega_c^1(X \backslash K)$ 
satisfies vanishing
$$\int_X \ u \wedge d^+a \ =0.$$
\end{lemma}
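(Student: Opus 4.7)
The plan is to reduce the claim to a straightforward Stokes computation on a compact subset of $X \setminus K$, once we have traded $d^+a$ for $da$ using the self-duality of $u$.

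First I would observe that for any compactly supported $\omega \in \Omega^2(X)$, we have $\int_X u \wedge \omega = \int_X u \wedge \omega^+$. Indeed, $u$ is self-dual and the anti-self-dual part $\omega^-$ is pointwise orthogonal to $u$, so $u \wedge \omega^- = \langle u, *\omega^- \rangle \,\mathrm{vol} = -\langle u, \omega^- \rangle \,\mathrm{vol} = 0$. Applying this with $\omega = da$ gives
\[
\int_X u \wedge d^+ a \;=\; \int_X u \wedge da.
\]
Note the integral is well defined since $da$ has compact support and $u \in L^2$, so the integrand lies in $L^1(X)$.

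Next, since $a \in \Omega^1_c(X \setminus K)$, the form $da$ is supported in a compact set contained in the open set $X \setminus K$. Choose a compact manifold-with-boundary $\Omega \subset X \setminus K$ whose interior contains $\supp(a)$; then $a$ vanishes in a neighborhood of $\partial \Omega$, hence $da$ vanishes identically on a neighborhood of $\partial\Omega$. On $\Omega \subset X \setminus K$ we have $u = d\alpha$ by hypothesis, so
\[
\int_X u \wedge da \;=\; \int_\Omega u \wedge da \;=\; \int_\Omega d\alpha \wedge da \;=\; \int_\Omega d(\alpha \wedge da),
\]
where the last step uses $d(da) = 0$. By Stokes' theorem on $\Omega$ the final integral equals $\int_{\partial \Omega} \alpha \wedge da$, which vanishes because $da \equiv 0$ in a neighborhood of $\partial \Omega$. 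Combining the displayed identities gives $\int_X u \wedge d^+a = 0$.

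The argument has no genuine obstacle: the only subtle point is ensuring that $\alpha$, which is defined only outside $K$ and is \emph{not} assumed to have any global integrability, never needs to be integrated against a non-compactly supported form. This is guaranteed by placing everything inside the auxiliary compact region $\Omega \subset X \setminus K$, on which $\alpha$ is smooth. The self-duality reduction in the first step is what allows us to replace the projection $d^+$ (which does not immediately interact with Stokes' theorem) by the full differential $d$.
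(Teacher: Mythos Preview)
Your proof is correct and follows essentially the same approach as the paper: replace $d^{+}a$ by $da$ using the self-duality of $u$, then apply Stokes' theorem on the region where $u=d\alpha$. The only cosmetic difference is that the paper multiplies $\alpha$ by a compactly supported cut-off function $\varphi$ (equal to $1$ on $\supp a$ and $0$ on $K$) in order to integrate over all of $X$, whereas you instead restrict to a compact domain $\Omega$ with boundary containing $\supp a$ and observe that the boundary term $\int_{\partial\Omega}\alpha\wedge da$ vanishes; these are two equivalent bookkeeping devices for the same Stokes argument.
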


\begin{proof}
We have the equality
$$\int_X  \ u  \wedge d^+a =
\int_{X}  \ u  \wedge da $$
since $u$ is self-dual.
By the assumption, 
$$u | X \backslash K = d\alpha$$ holds 
for some $\alpha \in \Omega^1(X \backslash K)$.
Then,
$$\int_X \ u \wedge da  =
\int_{X\backslash K}  \ d\alpha  \wedge da .$$

Choose  a compactly supported
cut-off function $\varphi: X \to [0,1]$ with
$$\varphi|K \equiv 0,  \quad
\varphi| \text{ supp } a  \equiv 1.$$
Then, we have the equalities
$$\int_{X\backslash K}  \ d\alpha  \wedge da =
\int_{X\backslash K}  \ d(\varphi \alpha)  \wedge da
=\int_{X}  \ d(\varphi \alpha)  \wedge da =
\int_{X}  \ d(\varphi \alpha  \wedge a) =0.
$$
These equalities are combined to obtain the conclusion.
 \end{proof}

The following proposition requires no uniform bound on the values of the $L^2$ norm of $a_i$.

\begin{theorem}\label{$L^1$}
Fix $1 \leq p,q \leq \infty$ with $p^{-1}+q^{-1}=1$.
Suppose $u \in \mathcal H^+(X; \R)$ is
a non-zero  $L^2 $ harmonic self-dual $2$-form
that is also in $L^p \cap L^q$ and is $L^p$ exact at infinity.

Then there is no  sequence $a_i \in \Omega^p(K_i)$ such that

$(1)$ uniform bound
$ ||d(a_i)||_{L^{q}(K_i)}  \leq C < \infty$ holds, and 

$(2)$ convergence
$d^+(a_i) \to u= u^+$
holds   in  $L^q$ norm on each compact subset.
\end{theorem}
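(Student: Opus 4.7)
The plan is to run the same Stokes--cut-off scheme as in Lemma~\ref{$L^2$-bound}, with two replacements: Cauchy--Schwarz is replaced by H\"older's inequality with exponents $p,q$, and the uniform $L^2_1$-bound on $a_i$ is replaced by the $L^p$-exactness of $u$ at infinity---the latter is the essential new input. Take cut-off functions $\chi_i:X\to[0,1]$ with $\chi_i\equiv 1$ on $K_{i-1}$, compactly supported in the interior of $K_i$, and with $||d\chi_i||_{L^\infty}\to 0$. Fix a compact $K_0\subset X$ and $\alpha\in\Omega^1(X\setminus K_0)$ realising the $L^p$-exactness at infinity: $u=d\alpha$ on $X\setminus K_0$ with $||\alpha||_{L^p(X\setminus K_0)}<\infty$. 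The object of study is
\[
I_i := \int_{K_i}\chi_i\, u\wedge d^+a_i \;=\; \int_{K_i}\chi_i\, u\wedge da_i ,
\]
where the second equality uses that $\chi_i u$ is self-dual, so $\chi_i u\wedge d^-a_i=0$ pointwise. The strategy is to derive incompatible lower and upper estimates on $I_i$.

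For the lower bound, choose a compact $K\subset X$ with $||u||_{L^p(X\setminus K)}$ and $||u||_{L^2(X\setminus K)}$ arbitrarily small, which is possible since $u\in L^2\cap L^p$. For $i$ large enough that $\chi_i\equiv 1$ on $K$, hypothesis $(2)$ gives $\int_K u\wedge d^+a_i\to\int_K u\wedge u$, which is close to $||u||_{L^2(X)}^2$. On the tail, H\"older together with hypothesis $(1)$ yields
\[
\left|\int_{K_i\setminus K}\chi_i u\wedge d^+a_i\right|\le ||u||_{L^p(X\setminus K)}\,||d^+a_i||_{L^q(K_i)}\le C\,||u||_{L^p(X\setminus K)},
\]
which is arbitrarily small. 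Hence $I_i\ge \tfrac12||u||_{L^2(X)}^2>0$ for all sufficiently large $i$.

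For the upper bound, apply Stokes twice. First, since $u\wedge\chi_i a_i$ is compactly supported in the interior of $K_i$ and $du=0$, one has $\int_{K_i}u\wedge d(\chi_i a_i)=0$; expanding $d(\chi_i a_i)=d\chi_i\wedge a_i+\chi_i da_i$ gives $I_i=-\int_{K_i}u\wedge d\chi_i\wedge a_i$. For $i$ so large that $K_{i-1}\supset K_0$, the support of $d\chi_i$ lies in $K_i\setminus K_{i-1}\subset X\setminus K_0$, where $u=d\alpha$. A second Stokes application to the compactly supported $3$-form $\alpha\wedge d\chi_i\wedge a_i$ (using $d^2\chi_i=0$) transfers the derivative off $u$:
\[
I_i=\int_{K_i}\alpha\wedge d\chi_i\wedge da_i .
\]
H\"older then bounds
\[
|I_i|\le ||\alpha||_{L^p(X\setminus K_0)}\cdot ||d\chi_i||_{L^\infty}\cdot ||da_i||_{L^q(K_i)}\le C'\,||d\chi_i||_{L^\infty}\longrightarrow 0,
\]
contradicting the lower bound.

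The hard part is the double integration by parts in Step~2, and in particular ensuring both Stokes applications produce no boundary contributions; this forces the cut-off $\chi_i$ to be supported in the interior of $K_i$ so that $\chi_i a_i$ and $\alpha\wedge d\chi_i\wedge a_i$ extend by zero to globally compactly supported smooth forms on $X$. Conceptually, $L^p$-exactness at infinity takes over the role played by the uniform $L^2_1$-bound on $a_i$ in Lemma~\ref{$L^2$-bound}: the primitive $\alpha$ absorbs the derivative, so only $||da_i||_{L^q}$---not any norm of $a_i$ itself---needs to be controlled.
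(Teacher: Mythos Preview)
Your proof is correct and follows essentially the same approach as the paper's: both arguments study $I_i=\int_{K_i}\chi_i u\wedge d^+a_i$, obtain a positive lower bound by splitting into a core region (where $d^+a_i\to u$ in $L^q$) and a tail controlled by H\"older with the $L^p$-smallness of $u$, and then drive $I_i\to 0$ via two successive Stokes applications, the second exploiting $u=d\alpha$ on $\operatorname{supp} d\chi_i$ so that only $\|\alpha\|_{L^p}\|d\chi_i\|_{L^\infty}\|da_i\|_{L^q}$ survives. Your explicit requirement that $\chi_i$ be supported in the interior of $K_i$ is a clean way to justify the vanishing of boundary terms, which the paper handles implicitly via $\chi_i|_{K_i^c}\equiv 0$.
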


\begin{proof}
{\bf Step 1:}
Suppose such a sequence  exists.
Let us fix  $i_0$ and choose arbitrarily small $\delta >0$.
Then we obtain the estimates
\begin{align*}
 \int_{K_{i_0}}  \  u \wedge d^+(a_i) &  = ||u||^2_{L^2(K_{i_0})}
 + \int_{K_{i_0}} \  u \wedge (d^+(a_i)-u)  \\
 & \geq ||u||^2_{L^2(K_{i_0})}
 -  ||u||_{L^p(K_{i_0})}  ||d^+(a_i)-u||_{L^q(K_{i_0})} \\
 & \geq
 ||u||^2_{L^2(K_{i_0})} - \delta   \ > 0 .
 \end{align*}
 if $i > i_0$ is sufficiently large.

{\bf Step 2:}
Since the estimates 
\begin{align*}
|\int_{K_i \backslash K_{i_0}}  \chi_i u \wedge d^+a_i  |
 & \leq ||\chi_i u||_{L^p(K_i \backslash K_{i_0})} ||d^+a_i||_{L^q(K_i \backslash K_{i_0})} \\
& < \delta  ||d^+a_i||_{L^q(K_i \backslash K_{i_0})} \leq C \delta
 \end{align*}
 hold, we obtain positivity 
$$\int_{K_i} \  \chi_i u \wedge d^+a_i  =
\int_{K_{i_0}} \  u \wedge d^+a_i  +\int_{K_i \backslash K_{i_0}} \ \chi_i  u \wedge d^+a_i 
> \int_{K_{i_0}} \  u \wedge d^+a_i  - C \delta >0$$
by Step $1$.

 On the other hand consider the equalities
 \begin{align*}
  \int_{K_i} \  \chi_i u \wedge d^+a_i & =  \int_X \  \chi_i 
  u \wedge da_i =  \int_X \  d(\chi_i u \wedge a_i)
  - \int_X d \chi_i \wedge u \wedge a_i
  \\
  & =  -  \int_{\text{Supp } d\chi_i }  \ d \chi_i \wedge u \wedge a_i
  = -  \int_{\text{Supp } d\chi_i }  \ d \chi_i \wedge d \alpha  \wedge a_i \\
  & = \int_{\text{Supp } d\chi_i }  \ d( d \chi_i \wedge  \alpha  \wedge a_i )
  -
  \int_{\text{Supp } d\chi_i }  \ d \chi_i \wedge  \alpha  \wedge d a_i \\
  & =  -
  \int_{\text{Supp } d\chi_i }  \ d \chi_i \wedge  \alpha  \wedge d a_i
    \end{align*}
     by Stokes' theorem.
 Then, we have the estimates
  \begin{align*}
   |  \int_{\text{supp } d \chi_i}  & \  d \chi_i  \wedge  \alpha
      \wedge d a_{i} | 
    \leq  ||d \chi_i ||_{L^{\infty}(X)}
||   \alpha||_{L^p(\text{supp } d \chi_i)}||da_i||_{L^q(\text{supp } d \chi_i)}
   \end{align*}
 which is arbitrarily small for large $i$. This is a contradiction.
  \end{proof}

  In particular, we have the following corollary 
   by setting $p=q=2$.
  
\begin{corollary}\label{stokes}
Suppose $u \in \mathcal H^+_e(X; \R)$ 
is a non-zero  $L^2 $ harmonic self-dual $2$-form
that  is $L^2$ exact at infinity.

Then there is no  sequence $a_i \in \Omega^1(K_i)$
such that

\vspace{2mm}

$(1)$ 
convergence
$$d^+(a_i) \to u$$
holds   in  the $L^2$ norm on each compact subset, and 

\vspace{2mm}

$(2)$ there is a uniform bound 
$$||d(a_i)||_{L^{2}(K_i)}  \leq C < \infty.$$
\end{corollary}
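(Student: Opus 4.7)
The plan is to derive this statement as an immediate specialization of Theorem \ref{$L^1$} to the self-conjugate H\"older pair $p=q=2$. Since the Corollary is explicitly presented as following from that Theorem, the proof amounts to checking that all hypotheses match up under this choice of exponents.

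First I would verify that $u$ satisfies the hypotheses of Theorem \ref{$L^1$} when $p=q=2$. The relation $p^{-1}+q^{-1}=1$ is satisfied. By the definition of $\mathcal{H}^+_e(X;\R)$, the element $u$ is a non-zero $L^2$ harmonic self-dual $2$-form that is $L^2$ exact at infinity, so in particular $u \in L^p \cap L^q = L^2$ and the $L^p$-exactness-at-infinity condition reduces to $L^2$-exactness-at-infinity, which is automatic. No additional integrability on $u$ is needed.

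Next I would match the two conditions on the sequence $a_i$. With $q=2$, the uniform bound $\|d(a_i)\|_{L^q(K_i)} \leq C$ in hypothesis (1) of Theorem \ref{$L^1$} becomes precisely hypothesis (2) of the Corollary, and the convergence $d^+(a_i) \to u$ in $L^q$-norm on each compact subset in hypothesis (2) of Theorem \ref{$L^1$} becomes precisely hypothesis (1) of the Corollary. Invoking the conclusion of Theorem \ref{$L^1$} in this setting then yields the non-existence of such a sequence $a_i \in \Omega^1(K_i)$, which is the assertion of the Corollary.

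Because the argument is purely a specialization, there is no genuine obstacle: the substantive analytic work---namely (Step~1) using a fixed compact set $K_{i_0}$ on which $d^+(a_i)$ is already close to $u$ in $L^q$ to force strict positivity of $\int_{K_{i_0}} u \wedge d^+(a_i)$, and (Step~2) converting the integral over $K_i$ to a boundary-like term supported in $\mathrm{supp}\,d\chi_i$ via the cut-off $\chi_i$ and the primitive $\alpha$ guaranteed by $L^p$-exactness at infinity, which tends to zero---has already been carried out inside the proof of Theorem \ref{$L^1$}. The only point worth remarking is that $p=q=2$ is the natural choice here because $u$ is by assumption an $L^2$ form, so no stronger integrability than $L^2$ of $u$ or $L^2$ of $d(a_i)$ is required.
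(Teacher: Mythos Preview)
Your proposal is correct and matches the paper's own approach exactly: the paper states the Corollary immediately after Theorem~\ref{$L^1$} with the remark ``In particular, we have the following corollary by setting $p=q=2$,'' and your argument simply spells out that specialization.
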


  \begin{corollary}\label{cut-off}
  Suppose $u \in \mathcal H^+_e(X; \R)$ is
  a non-zero  $L^2 $ harmonic self-dual $2$ form
 that  is $L^2$ exact at infinity
 with $u = d \alpha $ outside of $K \Subset X$.

Then there exists  a compactly supported $2$-form
  $v  \in \Omega^2_c(X)$ 
  such that the following property holds. 
There is  no  sequence $a_i \in \Omega^1(K_i)$
such that

\vspace{2mm}

$(1)$ 
convergence
$$d^+(a_i) \to v^+$$
holds   in  $L^2$ norm on each compact subset, where $v^+$ is the projection to the self-dual part of $v$, and

\vspace{2mm}

$(2)$ a uniform bound 
$$||d(a_i)||_{L^{2}(K_i)}  \leq C < \infty$$
holds. 
  \end{corollary}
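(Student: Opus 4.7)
The plan is to reduce Corollary \ref{cut-off} to the preceding Corollary \ref{stokes} by producing a compactly supported representative of $u$ obtained by cutting off its primitive at infinity, and then to transfer any hypothetical approximating sequence back to one that violates Corollary \ref{stokes}.

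First I would construct the form $v$ explicitly. Choose a compact set $K'$ with $K \subset \mathring{K'}$, a larger compact set $K''$, and a smooth cut-off $\phi : X \to [0,1]$ with $\phi \equiv 0$ on a neighbourhood of $K$, $\phi \equiv 1$ outside $K''$, and $\supp d\phi \subset K'' \setminus K'$. Extending $\phi \alpha$ by zero inside $K$ gives a smooth $1$-form on all of $X$, and I set
\[
v := u - d(\phi \alpha).
\]
On $X \setminus K''$ we have $\phi \equiv 1$, so $d(\phi\alpha) = d\alpha = u$ and thus $v \equiv 0$ there; hence $v \in \Omega^2_c(X)$, supported in $K''$. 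Writing $d(\phi \alpha) = d\phi \wedge \alpha + \phi\, d\alpha$, the first term is compactly supported and smooth, and the second satisfies $|\phi\, d\alpha| \leq |u|$ on $X \setminus K$, so $d(\phi \alpha) \in L^2(X)$.

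Next I would argue by contradiction: suppose a sequence $a_i \in \Omega^1(K_i)$ satisfies $(1)$ and $(2)$ of the corollary. Define
\[
b_i := a_i + (\phi \alpha)|_{K_i} \in \Omega^1(K_i).
\]
For any compact $L \subset X$ and $i$ large, $d^+(b_i)|_L = d^+(a_i)|_L + d^+(\phi \alpha)|_L$, and since $d^+(a_i) \to v^+$ in $L^2(L)$, using $u^+ = u$ we obtain
\[
d^+(b_i)|_L \;\longrightarrow\; v^+|_L + d^+(\phi \alpha)|_L = u|_L \quad \text{in } L^2(L).
\]
The uniform estimate
\[
\|d(b_i)\|_{L^2(K_i)} \leq \|d(a_i)\|_{L^2(K_i)} + \|d(\phi \alpha)\|_{L^2(X)} \leq C + C'
\]
also holds. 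Thus the sequence $\{b_i\}$ satisfies conditions $(1)$ and $(2)$ of Corollary \ref{stokes} applied to the non-zero, $L^2$-exact-at-infinity form $u$, contradicting that corollary.

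The argument is essentially a bookkeeping reduction; the only mildly technical step is confirming $d(\phi \alpha) \in L^2(X)$, which rests on the $L^2$ exactness hypothesis $u = d\alpha$ outside $K$ together with the compactness of $\supp d\phi$. The main obstacle is conceptual rather than computational: one must notice that although $\phi \alpha$ itself need not be in $L^2$, only the $L^2$ norm of its differential enters both conditions, and this is controlled by $\|u\|_{L^2(X)}$.
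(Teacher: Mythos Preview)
Your proof is correct and follows essentially the same approach as the paper: cut off the primitive $\alpha$ to get a global $1$-form $\phi\alpha$ (the paper calls it $\alpha' = \chi\alpha$), set $v = u - d(\phi\alpha)$, and show that any sequence approximating $v^+$ can be shifted by $\phi\alpha$ to one approximating $u$, contradicting Corollary~\ref{stokes}. One minor remark: your closing comment that ``$\phi\alpha$ itself need not be in $L^2$'' is actually unnecessary caution---the $L^2$-exact-at-infinity hypothesis includes $\|\alpha\|_{L^2(X\setminus K)} < \infty$, so $\phi\alpha \in L^2(X)$ automatically (the paper states this explicitly).
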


  \begin{proof}
Let $\chi \in C^{\infty}(X )$  be 
a cut-off function  which is $1$ 
  near infinity and vanishes on $ K$. Then 
  $\alpha' : = \chi \cdot \alpha \in L^2(X; \Lambda^1) \cap \Omega^1(X)$ satisfies $d \alpha' \equiv u$ on a complement of 
  a compact subset. 
  
  Then we can conclude that 
   there is no  family $a'_i \in \Omega^1(K_i)$ with uniformly bounded norms 
   $||d a_i'||_{L^2(K_i)} \leq C$
   such that
convergence
$$d^+(a'_i) \to v^+ : = \text{pr}_+( u - d \alpha') = u -  d^+ \alpha' $$
holds   in  the $L^2$ norm on each compact subset,
where $\text{pr}_+$ is the projection to the self-dual part.
If there were such a family, then 
\[
a_i: = a_i' + \alpha'
\]
would satisfy the conditions  $(1)$ and $(2)$  in 
Corollary \ref{stokes}.
\end{proof}

\subsection{  Atiyah-Hitchin-Singer complexes over cylindrical-end manifolds}
The Atiyah-Hitchin-Singer (AHS) complex is an elliptic differential complex over a
 Riemannian  $4$-manifold $X$
\begin{gather*}
0 \;\longrightarrow \; L^2_{k+1}(X,g)  
\;\stackrel{d}{\longrightarrow} \; 
L^2_k((X,g); \Lambda^1)  
  \stackrel{d^+}{\longrightarrow} \; 
L^2_{k-1}((X,g); \Lambda^2_+) \;
\longrightarrow \; 0
\end{gather*}
between Sobolev spaces,
where $d^+$ is the composition of the differential with the projection
to the self-dual $2$-forms. Here $k \geq 1$.
Note that $H^0=0$ always holds when $X$ is connected and
non-compact.
Recall that an element in the   second reduced $L^2$ cohomology group admits  a harmonic representative
 by Lemma \ref{harmonic-repre}.

Suppose  end$(X)$ 
 is isometric to the product $Y \times [0, \infty)$
 so that $g = g' +dt^2$ on the end,
 where $(Y,g')$ is  a  closed Riemannian three-manifold.
Such  a space is called  a cylindrical-end manifold.

 Let us fix a small and positive  $\mu >0 $. 
Then we set
$$\tau: Y \times [0, \infty) \mapsto [0, \infty), \quad \tau(m,t) = \mu t$$
and extend it to a function  $\tau :X \to [0, \infty)$ 
 that  coincides with $\tau(m,t) $ 
  on end$(X)$.
Then, we define the
 weighted Sobolev $k$ norm on $X$ by
$$||u||_{L^2_{k,\mu}}= ( \ \sum_{l \leq k} \ \int_X \exp(\tau)|\nabla^l u|^2 \ )^{\frac{1}{2}}.$$
We can denote  by  $L^2_{k, \mu}$ 
 the completion of $C^{\infty}_c(X)$ with respect to the norm,
  because the isomorphism class of the function space
depends only on  $\mu>0$,  rather than $\tau$ itself.

Then we have the weighted AHS complex
\begin{gather*}
0 \;\longrightarrow \; L^2_{k+1,\mu}(X)  
\;\stackrel{d}{\longrightarrow} \; 
L^2_{k,\mu}(X; \Lambda^1)  
  \stackrel{d^+}{\longrightarrow} \; 
L^2_{k-1,\mu}(X; \Lambda^2_+) \;
\longrightarrow \; 0.
\end{gather*}

Let us identify the orthogonal complement of the image of 
$d^+$ with the space of the cokernel, and take
an element 
$u \in L^2_{k-1,\mu}(X; \Lambda^+)$  in the cokernel of $d^+$.
Then $u$  satisfies the equality
$$0 = (d^+)_{\tau}^*(u) := \exp(- \tau) (d^+)^*(\exp(\tau) u),$$ 
and hence $(d^+)^* (\exp(\tau) u) =0$ holds.

Note that 
 the de Rham cohomology $H^2(\text{end}(X); \R) =0$ vanishes on the end, if and only if 
 $Y$ is a rational homology sphere.
The following property is well known.
\begin{prop}\label{ahs-exact}\cite{kato1}
%\cite{kato1}
Suppose $Y$ is a rational homology sphere.
Then for any small $\mu >0$,
$$\exp(\tau) u \in L^2(X; \Lambda^+)$$
holds
 for  any element  $u \in L^2_{k-1,\mu}(X; \Lambda^+)$
in the orthogonal complement of the image of $d^+$.

Moreover, $\exp(\tau) u$
 is $L^2$ exact at infinity.
\end{prop}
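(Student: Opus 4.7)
The strategy is to translate the weighted orthogonality condition into a standard harmonic equation for $v := \exp(\tau)u$ on $X$, and then exploit separation of variables on the cylindrical end, using the rational-homology-sphere hypothesis on $Y$ to extract exponential decay.

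First I would unpack the identity $(d^+)_\tau^*u = 0$, which by definition means $(d^+)^*(\exp(\tau)u) = 0$. Writing $v := \exp(\tau)u$, which a priori sits in the negatively weighted $L^2_{-\mu}$, the self-duality of $v$ together with the identity $(d^+)^*v = d^*v$ (for $v \in \Lambda^+$, since one may replace $da$ by $d^+a$ under integration against a self-dual form) gives $d^*v = 0$. Combining with $*v = v$ and $d^* = -{*}d{*}$ on $2$-forms forces $dv = 0$ as well. Elliptic regularity for $d+d^*$ then makes $v$ a smooth harmonic self-dual $2$-form on $(X,g)$.

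Next I would restrict to $\text{end}(X) = Y \times [0,\infty)$ and decompose $v = {*_Y}\beta(t) + \beta(t)\wedge dt$, where $\beta(t) \in \Omega^1(Y)$ depends on $t$. The equation $dv = 0$ then becomes the coclosed constraint $d_Y {*_Y}\beta = 0$ together with the evolution
\begin{equation*}
\partial_t\beta = -{*_Y}d_Y\beta =: A\beta,
\end{equation*}
where $A$ is a first-order self-adjoint elliptic operator on coclosed $1$-forms on $Y$ with discrete spectrum. Since $\ker A \cong H^1(Y;\R) = 0$ by the rational-homology-sphere hypothesis, $A$ admits a uniform spectral gap $|\lambda| \geq \lambda_0 > 0$. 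Expanding $\beta(t) = \sum_\lambda c_\lambda e^{\lambda t}\phi_\lambda$ in an orthonormal eigenbasis of $A$, the weighted condition $u \in L^2_{k-1,\mu}$ on the end reads $\int_0^\infty\! e^{-\mu t}|v|^2\, dt\, d\text{vol}_Y < \infty$, which forces $c_\lambda = 0$ whenever $2\lambda - \mu \geq 0$. Choosing $\mu < 2\lambda_0$ eliminates every nonnegative mode, leaving only $\lambda \leq -\lambda_0$, so $|v(y,t)| \lesssim e^{-\lambda_0 t}$. Since $v$ is automatically $L^2$ on the compact core, this gives $\exp(\tau)u = v \in L^2(X;\Lambda^+)$.

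For the $L^2$-exactness at infinity I would use the eigenrelation $d_Y\phi_\lambda = -\lambda\,{*_Y}\phi_\lambda$ to set $\alpha := -\sum_\lambda \lambda^{-1}c_\lambda e^{\lambda t}\phi_\lambda$ on the end; a direct computation shows $d\alpha = {*_Y}\beta + \beta\wedge dt = v$ there. Because only $\lambda \leq -\lambda_0$ modes appear, $\alpha$ decays exponentially at the same rate as $\beta$, hence lies in $L^2(\text{end};\Lambda^1)$, establishing the $L^2$-exactness. The main technical obstacle is justifying the eigenfunction expansion and termwise differentiation in the appropriate Sobolev topology on the end, but this is the standard Atiyah--Patodi--Singer calculus for a Dirac-type boundary operator on a cylinder, so no essentially new difficulty arises.
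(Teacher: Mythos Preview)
Your argument is correct and follows essentially the same route as the paper: reduce to the closed/coclosed equation for $v=e^{\tau}u$, separate variables on the cylinder to obtain the first-order evolution governed by $*_Y d_Y$ on coclosed $1$-forms, and use the spectral gap coming from $H^1(Y;\R)=0$ to force exponential decay and produce an $L^2$ primitive. The paper orders the steps slightly differently---it first writes $v=d\mu$ on the end using $H^2(\text{end}(X);\R)=0$, projects $\mu$ to its coclosed component, and proves decay for that primitive via a growth-bound/contraction estimate rather than an explicit eigenfunction expansion---but the content is the same.
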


\begin{proof} 
For convenience, we give a proof below.

{\bf Step 1:}
Let us take an element $u \in  (L^2_{k-1})_{\tau}(X; \Lambda^+)$
that  satisfies  the equality  $(d^+)^*_{\tau}(u)=0$.
Then,
$(d^+)^*(e^{\tau}u)=  \pm * d( * e^{\tau}u) =\pm *d(e^{\tau}u)=0$ 
hold.

Since $H^2( \text{end} (X); \R)=0$, we can  express $e^{\tau}u=d\mu$
 for some $\mu \in \Omega^1(\text{end} (X))$.
Let us denote $\mu =\beta + fdt$, where $\beta$ does not contain
$dt$ component. Then we have the following equalities
$$d\mu= d_3\beta_t +(d_3f -\beta_t')\wedge dt =  d_3\beta_t + *_3d_3\beta_t \wedge dt,$$
where both $d_3$ and $*_3$ are    operators on $Y$.
The right-hand side form holds since it is self dual.
Let us decompose $\beta_t= \beta_t^1+ \beta_t^2$,
where $\beta_t^1 $ and $\beta_t^2 $  are the components of the closed and co-closed forms on $Y$, respectively.
Then from the last two terms, we obtain  the equality $d_3f_t =(\beta_t^1)'$.
In particular, 
 $$e^{\tau}u|\text{ end}(X) =d\beta_t^2 =d_3\beta_t^2 - (\beta_t^2)' \wedge dt =
d_3\beta_t^2 + *_3d_3\beta_t^2 \wedge dt.$$
By the decomposition, there is  a positive constant $C$ such that the estimate holds:
$$||d_3 \beta_t^2||_{L^2_{k-1}( Y)}
\ \geq \ C||\beta_t^2||_{L^2_k( Y)}.$$

{\bf Step 2:}
We have the following relations
$$e^{\tau}u |\text{ end}(X) =d\mu, \quad ||\mu_t||_{L^2_k(Y)}
\  \leq \  C||e^{\tau}u||_{L^2_{k-1}(Y)}$$
where $\mu =\beta_t^2$ in Step $1$.
For every $t$,
 $\mu_t \in \Omega^1(Y)$ is smooth by the elliptic estimate, since it
lies on the orthogonal complement to $\ker d_3$. 
Moreover $\mu$ is smooth on the $t$-variable, because its differential $
\mu'$ by $t$ is also smooth by the above formula.

Now, $*_3 d_3$ is invertible 
on $(\ker d_3)^{\perp}$ and 
 is self-adjoint with respect to  the $L^2$ inner product.
Since $\mu$ satisfies the elliptic equation 
$(\frac{\partial}{\partial t} + *_3d_3)\mu =0$,
it decays exponentially.
More precisely there exist constants $C >0$ and $ \lambda_0 >0$ 
which are both  independent of $\mu$ such that  the estimate
 $$||\mu||_{L^2(Y_t)} \leq \exp( -\lambda_0 t) 
 \sup \{  \ ||\mu||_{L^2(Y_s)}; 0 \leq s \leq 2t  \ \}$$
 holds. 
Note that 
$\mu$ can grow at most in the following way:
$$||\mu||_{L^2(Y_t)} 
 \ \leq \ C \exp(t \mu)||u||_{L^2(Y_t)}.$$ 
Combining these estimates, one can conclude that  $\mu$ decays 
 exponentially.
 
\end{proof}
\vspace{3mm}

\section{Seiberg-Witten theory and scalar curvature}
Let us quickly review 
Seiberg-Witten theory over compact $4$-manifolds
\cite{morgan}.

\subsection{Seiberg-Witten map  over compact $4$-manifolds}
Let $V$ be a real $4$-dimensional Euclidean space, and
consider the ${\mathbb Z}_2$-graded  Clifford algebra $Cl(V) =Cl_0(V) \oplus Cl_1(V)$. 
Let $S$ be  the unique complex $4$-dimensional irreducible representation of 
$Cl(V)$.
Then, for any vector $v \in S^+$, we set
$$\sigma(v) \equiv v \otimes v^* - \frac{ |v|^2}{2} \text{ id } \in \wedge^2_+(V) \otimes i  {\mathbb R}.$$
One can apply it to the cotangent bundle of a compact Riemannian $4$-manifold on each fiber.

Let $(M,h)$ be an oriented compact  Riemannian $4$-manifold 
 equipped with a spin$^c$ structure $\frak{L}$. Let 
 $S^{\pm}$ and $L$ be the spinor bundles
and the determinant bundle respectively.

Let $A_0$ be a smooth $U(1)$ connection on $L$. With a Riemannian metric on $M$,
$A_0$  induces a spin$^c$ connection 
and the associated Dirac operator $D_{A_0}$ on $S^{\pm }$.
Fix a large $k \geq 2$, and consider  the configuration space
$${\frak D}= \{(A_0 +   a, \psi)   | \ a \in L^2_k(M; 
\Lambda^1 \otimes  i {\mathbb R}), \ \psi \in L^2_k(M;S^+)\}.$$

Let $u \in C^{\infty}(M; \Lambda^+)$ be a smooth self-dual $2$-form.
Then  we have the perturbed Seiberg-Witten map
\begin{align*}
& SW_u : {\frak D} \to L^2_{k-1}(M;  S^- \oplus \Lambda^2_+ \otimes  i {\mathbb R}), \\
& (A_0 + a, \psi) \to (D_{A_0+a}(\psi) , F^+_{A_0 +a} - \sigma(\psi) - iu).
\end{align*}
Note that the space of connections is independent of choice of $A_0$ as long as $M$ is compact.

Let $* \in M$ be any fixed point, and ${\frak G}_*(M) := L^2_{k+1}(M; S^1)_*$ be the $L^2_{k+1}$-completion of
\[
       \{ u \in C^{\infty}(M, S^1) | \  u(*) = 1 \}, 
\] 
which acts on both ${\frak D}$ and $L^{2}_{k-1}(M; S^- \oplus \Lambda_+^2 \otimes i \R)$.
The action of the gauge transformation 
$u \in \frak{G}_*(M)$ on
the spinors are the complex multiplication, and on
a  $1$-form is given by
$$a \to a - 2u^{-1}du.$$
The action is
  trivial on self-dual $2$-forms. The map $SW_u$ is equivariant with respect to  the
 $\frak{G}_*(M)$ actions,  and hence
the gauge group acts on the  zero set
$$\tilde{\frak M} ((M,h),u):
= \{ (A_0+a, \psi) \in {\frak D}  | \ SW_u(A_0+a, \psi) =0 \}.$$
Moreover the  quotient space ${\frak B}^0 \equiv {\frak D} / {\frak G}_*(M)$ is Hausdorff.

The based and perturbed Seiberg-Witten moduli space is given by the quotient space
$${\frak M}_*((M,h),u)
: = \  \tilde{\frak M} ((M,h),u)/ {\frak G}_*(M).$$
 A connection $A_0 + a$ with $a \in L^2_k(M; \Lambda^1 \otimes i {\mathbb R})$ 
can be gauge transformed so that it satisfies   Ker $d^*(a)=0$.
Such a gauge transformation  is unique, since it is based.
Then the  slice  map  is given by the restriction
$$SW_u:  \  
  (A_0 + \text{ Ker } d^*) \times
L^2_k(M; S^+) 
\to L^2_{k-1}(M; S^- 
\oplus  \Lambda^2_+ \otimes  i {\mathbb R}) .$$
We consider the
 zero set 
 $$\frak{M}^0((M,h),u):  =
 SW_u^{-1}(0) \cap 
 \{ (A_0 + \text{ Ker } d^*) \times L^2_k(M; S^+) \}.
$$
The inclusion of the slice into the configuration space descends to an $S^1$-equivariant homeomorphism 
from the slice version 
$\frak{M}^0((M,h),u)$ to the quotient version 
$\frak{M}_*((M,h),u)$.

 \begin{definition}\label{perturb}
The Seiberg-Witten invariant is defined by counting the algebraic number of the oriented space
\[
SW(M, \frak L) : = \sharp \     \frak{M} ((M,h),u)  \ \in \ \Z
\]
for a generic choice of perturbation.
 \end{definition}
It is independent of choice of perturbation and Riemannian metric, and
hence is a smooth invariant.

\subsection{Scalar curvature}
Let $M$ be a compact spin $4$-manifold, and
 $h$ be a Riemannian metric on $M$. 
Then take  a solution
$(\phi, A=\nabla + a)$  to the SW equation perturbed
by  $v^+$
with respect to $(M,h)$.

\begin{prop}\label{sc-infty}
Given constants $C, \delta >0$, 
there is a constant $c$ such that the following holds.

Suppose there is a compact subset 
$K \subset M$ such that
the scalar curvature
 $\kappa$ on $(M, h)$ is 
 bounded from below as
 $$\kappa \geq - C.$$

  $(1)$ If 
non-negativity
$$\kappa | M \backslash K \   \geq 0$$
holds  on the complement of $K$, then 
there is a constant $c >0$ determined by  $v^+$,
$C$ and $ \vol K$ 
such that
the  following uniform bound holds:
\[ ||\phi||_{L^4(M,h)} ,
 \ ||da||_{L^2(M,h)}  \ \leq c.
 \]

 $(2)$ If the
uniform positivity
$$\kappa | M \backslash K \ \geq \delta >0$$
holds on the complement of $K$, then 
there is a constant $c >0$ determined by  $C, \delta, 
v^+ $ and $ \vol K$ 
such that
the uniform bound
\[
||\phi||_{L^2(M,h)}  \ \leq c
 \]
 holds,  in addition to  the estimates in $(1)$. 
\end{prop}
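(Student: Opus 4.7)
The plan is to base everything on the Seiberg--Witten Weitzenb\"ock (Lichnerowicz) identity. Writing $A = A_0 + a$, the equations $D_A\phi=0$ and $F_A^+ = \sigma(\phi) + iv^+$ combine to give the pointwise identity
\begin{equation*}
\nabla_A^*\nabla_A \phi + \tfrac{\kappa}{4}\phi + \tfrac{1}{4}|\phi|^2\phi + \tfrac{i}{2}v^+\!\cdot\!\phi \ = \ 0.
\end{equation*}
Taking the real inner product with $\phi$ and using $\tfrac{1}{2}\Delta_h|\phi|^2 = \mathrm{Re}\langle \nabla_A^*\nabla_A\phi,\phi\rangle - |\nabla_A\phi|^2$ (with the convention $\Delta_h = -\operatorname{div}\nabla$), one gets a scalar identity whose integrated form over the compact manifold $M$ reads
\begin{equation*}
\int_M |\nabla_A\phi|^2 \ + \ \int_M \tfrac{\kappa}{4}|\phi|^2 \ + \ \int_M \tfrac{1}{4}|\phi|^4 \ + \ \tfrac{1}{2}\mathrm{Re}\!\int_M \langle iv^+\!\cdot\!\phi,\phi\rangle \ = \ 0.
\end{equation*}

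First I would run the maximum principle on $|\phi|^2$. At a maximum $p_0$ of this continuous function on compact $M$, the Hessian and $|\nabla_A\phi|^2$ terms in the pointwise identity are nonnegative, so whenever $\phi(p_0)\neq 0$ one obtains
\begin{equation*}
|\phi|^2(p_0) \ \leq \ -\kappa(p_0) + 2|v^+|(p_0) \ \leq \ C + 2\|v^+\|_{L^\infty},
\end{equation*}
an $L^\infty$ bound with an $h$-independent constant. For the $L^4$ bound of part (1) I integrate the identity over $M$: since $\kappa \geq 0$ on $M\setminus K$, the scalar-curvature integral outside $K$ has the right sign to be discarded, leaving
\begin{equation*}
\tfrac{1}{4}\|\phi\|_{L^4}^4 \ \leq \ \tfrac{C}{4}\!\int_K|\phi|^2 \, d\mathrm{vol}_h \ + \ \tfrac{1}{2}\|v^+\|_{L^2}\|\phi\|_{L^4}^2,
\end{equation*}
whose first term is controlled by the previous $L^\infty$ bound together with $\vol K$. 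Solving this quadratic inequality in $\|\phi\|_{L^4}^2$ yields the claimed $L^4$ bound. For $\|da\|_{L^2}$, I combine $|\sigma(\phi)|^2 = \tfrac{1}{8}|\phi|^4$ with $F_A^+ = \sigma(\phi)+iv^+$ to bound $\|F_A^+\|_{L^2}$ by $\|\phi\|_{L^4}$ and $\|v^+\|_{L^2}$, then invoke the Chern--Weil identity $\|F_A^+\|_{L^2}^2 - \|F_A^-\|_{L^2}^2 = -4\pi^2 c_1(\mathfrak L)^2[M]$ to bound $\|F_A^-\|_{L^2}$, and finish with $da = F_A - F_{A_0}$.

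For part (2), the hypothesis $\kappa|_{M\setminus K} \geq \delta > 0$ allows me to retain the scalar-curvature term outside $K$ in the integrated identity, producing
\begin{equation*}
\tfrac{\delta}{4}\!\int_{M\setminus K}|\phi|^2 \ \leq \ \tfrac{C}{4}\!\int_K|\phi|^2 \ + \ \tfrac{1}{2}\!\int_M|v^+||\phi|^2,
\end{equation*}
and combining with the pointwise bound on $K$ gives the $L^2$ estimate. The delicate point throughout is to keep constants depending only on $C$, $\delta$, $v^+$ and $\vol K$, avoiding any dependence on $\vol M$ or on $h|_{M\setminus K}$. This is guaranteed by (i) the $h$-independence of the $L^\infty$ bound, (ii) the favourable sign of the scalar curvature on $M\setminus K$ in the integrated identity, so that $\vol(M\setminus K)$ never appears, and (iii) absorbing the topological term and the fixed reference contribution $\|F_{A_0}\|_{L^2}$ into constants attached to the spin$^c$ datum and $A_0$. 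This last book-keeping is the main obstacle, since it is what permits the same bound to survive a metric-degeneration limit in the intended application.
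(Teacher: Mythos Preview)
Your argument is correct and close in spirit to the paper's, but it differs in one substantive point worth flagging. You first invoke the maximum principle to obtain the pointwise bound $|\phi|^2 \leq C + 2\|v^+\|_{L^\infty}$ and then use this to control $\int_K|\phi|^2$ by $\vol K$ times the sup bound. The paper instead avoids the maximum principle entirely at this stage: it bounds $\int_K|\phi|^2$ by Cauchy--Schwartz as $\vol(K)^{1/2}\bigl(\int_K|\phi|^4\bigr)^{1/2}$, feeds this back into the integrated Weitzenb\"ock identity, and solves a quadratic inequality in $x = \bigl(\tfrac14\int_K|\phi|^4\bigr)^{1/2}$ rather than in $\|\phi\|_{L^4}^2$. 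Your route is arguably simpler and more transparent; the paper's route has the mild advantage of being purely integral, needing no pointwise regularity, and it keeps the constant free of $\|v^+\|_{L^\infty}$ (only $\|v^+\|_{L^2}$ enters).

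One remark on your final book-keeping: you worry about absorbing $\|F_{A_0}\|_{L^2}$ and the topological term $c_1(\mathfrak L)^2[M]$. In the paper's setting $M$ is spin and $A_0=\nabla$ is the spin connection, so $F_{A_0}=0$ and $c_1(L)^2=0$; hence $F_A = i\,da$ and $\|F_A^+\|_{L^2}=\|F_A^-\|_{L^2}$ exactly, with no residual constants. Your concern about metric-dependence of $\|F_{A_0}\|_{L^2(M,h)}$ would be genuine in the general spin$^c$ case, but it simply does not arise here.
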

Let $ X \subset M$ be an open subset.
Then by restriction, one obtains the estimates
\[
 ||\phi||_{L^4(X,h)} ,
 \ ||da||_{L^2(X,h)}  \ \leq c
 \]
 and
 \[
||\phi||_{L^2(X,h)}  \ \leq c
 \]
respectively.

\begin{proof}
One may assume that support of $v^+$ lies in $K$, by replacing $K$ with $K \cup \text{ supp } v^+$, if necessarily.

It follows from the Weitzenb\"ock formula
$$D_A^2(\phi) = \nabla_A^*\nabla_A(\phi) + \frac{\kappa}{4} \phi + \frac{F_A}{2}\phi$$
that the equality 
\[
0 = ||\nabla_A(\phi)||_{L^2(M)}^2 + \int_M \ \frac{\kappa}{4} |\phi|^2 \vol + \int_M \ < \frac{F_A}{2}\phi, \phi> \vol
\]
holds.
From  the SW equations, we have the equalities
\begin{align*}
<  F_A \phi , \phi>  & = <F_A^+ \phi , \phi>  \\
& = <(F_A^+ - \sigma(\phi) -iv^+ )\cdot \phi + (\sigma(\phi) + iv^+)\cdot \phi , \phi>  \\
& = \frac{|\phi|^4}{2} + <iv^+ \cdot \phi, \phi>.
\end{align*}
Then, we have the estimate
\begin{align*}
0 & \geq \int_K \  \frac{\kappa}{4}|\phi|^2 \vol 
+ \frac{1}{2} \int_K \ <iv^+ \cdot \phi, \phi> +
\frac{1}{4}  \int_K \ |\phi|^4 \vol  \\
& \qquad 
 +  \int_{M \backslash K} \ 
 \frac{\kappa}{4} |\phi|^2 \vol  + \frac{1}{4} \int_{M\backslash K}  \ |\phi|^4 \vol .
 \end{align*}
 We have the estimate
 \[
 | \int_K \ <iv^+ \cdot \phi, \phi>| \leq \sqrt{\int_K \ |\phi |^4}   \cdot ||v^+||_{L^2(K)} \leq c \sqrt{\int_K \ |\phi |^4} 
 .\]

Hence
\begin{align*}
- \int_K \  (\kappa+ |\phi|^2) \frac{ |\phi|^2}{4}  \vol  + c \sqrt{\int_K \ |\phi |^4}  &  \ \geq \ 
 \int_{M\backslash K}  \  (\kappa+ |\phi|^2) \frac{ |\phi|^2}{4}  \vol  \ \geq \ 0
  \qquad (*)
 \end{align*}
 
 By the assumption with $(*)$ above, we have the estimates
 $$C \int_K \ |\phi|^2 \vol \geq \int_K \  - \frac{\kappa}{4} 
  |\phi|^2 \vol \geq 
\frac{1}{4} \int_K \ |\phi|^4 \vol   - c \sqrt{\int_K \ |\phi |^4}
 .$$
Note the estimate
$$  \int_K \ |\phi|^4 \vol  \geq  \vol (K)^{-1}(\int_K \ |\phi|^2 \vol )^2 
$$  by Cauchy-Schwartz.
 Then, for $x^2 = \frac{1}{4} \int_K \ |\phi|^4 \vol  $,
\[
x^2 - c_Kx  \leq 0\]
holds 
for some $c_K >0$.
Hence, we obtain the estimate
\[
c_K^2 \geq  \frac{1}{4}  \int_K \ |\phi|^4 \vol .
\]
Combining these  estimates,  we obtain the estimate
\[
c_K  \sqrt{4 \vol K} \  \geq  \int_K \ |\phi|^2 \vol .
\]

Hence  the left  hand side of $(*)$ is bounded by some $C_K$, and so 
we have the bound
$$C_K 
\geq  \int_{M\backslash K}  \ (\frac{\kappa}{4} |\phi|^2 + |\phi|^4) \vol .$$
Combining these estimates, we obtain  the uniform bound
$$ \int_M \ |\phi|^4 \vol  \ \leq \ c_K'$$
in the case of $(1)$.
For $(2)$, we also obtain 
the uniform bound $\int_M \ |\phi|^2 \vol \leq c_K' $.

Now  the uniform bound 
$$||d^+(a)||_{L^2(M)}^2 =
||F^+_A||^2_{L^2(M)} \leq ||\phi||^4_{L^4(M)}  + ||v^+||^2_{L^2(K)}   \leq C'_K$$
holds by  the equality $F^+_A= \sigma(\phi) + \sqrt{-1} v^+$.
Consider the  topological invariant
$$0 = 4 \pi^2
c_1( L)^2 = \int_M \  F_A \wedge F_A \vol 
= \int_M \  |F^+_A|^2 \vol - \int_M \ |F^-_A|^2 \vol .$$
 Thus   the following bound also holds:
$$ ||d^-(a)||_{L^2(M)} ^2 = \int_M \ |F^-_A|^2 \vol  =  \int_M \   |F^+_A|^2 \vol  \leq C_K' .$$
Combining with the above, we obtain the bound $||da||_{L^2(M)} \leq c_K$. 
\end{proof}

\begin{remark}\label{gauge'}
$(1)$
We have not assumed that the solution is gauge fixed; hence,
we have  freedom of choice of solutions in its gauge equivalent class.

$(2)$
Later, we will 
apply Proposition \ref{sc-infty} with 
  a family of Riemannian metrics 
$h_{\lambda}$
on $M$
such that their restrictions 
$h_{\lambda}|U  $
coincide with each other
on an open subset $U \subset  X \subset M$ (see Lemma \ref{family} later).
Moreover, we  choose  perturbation $v^+$ by a self-dual 
$2$-form that is smooth and 
supported inside $U$ 
(see Corollary \ref{cut-off}).
Then, we can take $K = ( M \backslash U ) \cup \text{supp } v^+$.
\end{remark}

From an analytical perspective,
we have the following Lemma in 
 the case of uniformly positive scalar curvature.

\begin{lemma}\label{gromov and lawson2}
\cite{gromov and lawson2}
Suppose $X$ is spin with 
a complete Riemannian metric $(X.g)$. If the scalar curvature 
$\kappa$ is uniformly positive 
$$\kappa | X \backslash K \ \geq \delta >0$$
on the complement of a compact subset $K$, then 
the Dirac operator $D$ is Fredholm.
\end{lemma}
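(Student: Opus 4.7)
The plan is a standard Bochner argument via the Lichnerowicz--Weitzenb\"ock formula $D^2 = \nabla^*\nabla + \kappa/4$. First, for $\psi \in C^{\infty}_c(X; S^+)$, integration by parts---valid because $(X,g)$ is complete and $\psi$ has compact support---gives
\[
\|D\psi\|_{L^2(X)}^2 \; = \; \|\nabla\psi\|_{L^2(X)}^2 \; + \; \int_X \frac{\kappa}{4}|\psi|^2 \, dV.
\]
Fix a relatively compact open $U \Subset X$ with $K \subset U$ and a smooth cutoff $\chi : X \to [0,1]$ that equals $1$ on a neighbourhood of $K$ and has $\supp \chi \subset U$. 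Apply the identity above to $(1-\chi)\psi$, which is supported in $X \setminus K$ where $\kappa \geq \delta$; using the product rule $D((1-\chi)\psi) = (1-\chi)D\psi - c(d\chi)\psi$ (with $c(\cdot)$ Clifford multiplication) and absorbing the cross-term by Cauchy--Schwarz, combined with the trivial bound $\|\chi\psi\|_{L^2} \leq \|\psi\|_{L^2(U)}$, one obtains a G\aa{}rding-type coercivity estimate
\[
\|\psi\|_{L^2(X)}^2 \; \leq \; C\bigl(\|D\psi\|_{L^2(X)}^2 + \|\psi\|_{L^2(U)}^2\bigr), \qquad (\star)
\]
with $C$ depending on $K$, $\delta$, and $\|d\chi\|_{L^{\infty}}$.

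By the classical essential self-adjointness of Dirac operators on complete manifolds (Chernoff--Wolf), $(\star)$ extends from $C^{\infty}_c$ to the natural maximal $L^2$-domain of $D$. Interior elliptic regularity promotes $L^2$ control to $L^2_1$ on $U$ for elements of $\ker D$, so the Rellich embedding $L^2_1(U) \hookrightarrow L^2(U)$ together with $(\star)$ shows that the unit ball of $\ker D$ is $L^2(X)$-precompact, giving $\dim \ker D < \infty$. Closed range then follows by the usual dichotomy: for $\psi_n \perp \ker D$ with $D\psi_n \to \eta$ in $L^2$, if $\|\psi_n\|_{L^2}$ were unbounded one renormalises and extracts a limit that is a unit-norm element of $\ker D$ orthogonal to $\ker D$, a contradiction; hence $\|\psi_n\|$ is bounded, and $(\star)$ yields a convergent subsequence, placing $\eta \in \im D$.

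The same argument applied to the formal adjoint $D^*: S^- \to S^+$---which satisfies the identical Lichnerowicz formula with the same positive curvature term---yields $\dim \ker D^* < \infty$. Since $D$ has closed range, $\coker D \cong \ker D^*$ is finite-dimensional, completing Fredholmness.

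The main obstacle is twofold: clean handling of the cross-term $c(d\chi)\psi$ in the product rule so that it can be absorbed into the left-hand side of $(\star)$ rather than into $\|D\psi\|_{L^2}$, and the rigorous extension of $(\star)$ from compactly supported smooth sections to the full $L^2$-domain of $D$ on the non-compact manifold. The latter is precisely where the hypothesis of completeness is indispensable: it guarantees essential self-adjointness of $D$ and thereby prevents spurious boundary contributions at infinity from obstructing the density argument.
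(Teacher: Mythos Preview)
The paper does not supply its own proof of this lemma; it merely states the result and attributes it to Gromov--Lawson \cite{gromov and lawson2}. Your argument is the standard Lichnerowicz--Bochner route and is correct: the cutoff estimate leading to $(\star)$ is handled cleanly (the cross-term $c(d\chi)\psi$ is supported in $U$ and contributes only to the $\|\psi\|_{L^2(U)}$ term), completeness gives essential self-adjointness so that $(\star)$ passes to the full domain, and the Rellich argument for finite kernel plus the renormalisation dichotomy for closed range are the usual steps. Since there is no in-paper proof to compare against, there is nothing further to add.
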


In our  non-uniform case, we cannot expect to obtain such a conclusion.
In fact, ultimately, we will not use Fredholm theory over a non compact manifold.
Our use of positivity is to guarantee vanishing of an $L^4$ spinor  section on a complete
Riemannian $4$-manifold (see Lemma \ref{vanishing} below).

\section{Convergent process}
\subsection{Preparation}
Let $M$ be a compact oriented smooth $4$-manifold, and 
$X \subset M$ be an open subset  equipped 
with a complete Riemannian metric $g$ on $X$.
Choose  an exhaustion 
$$K_0 \Subset K_1 \Subset \dots \Subset K_{i+1} \Subset \dots \Subset X$$
by compact subsets.

We will later assume that $X$
 is simply connected and simply connected at infinity.
One may assume that 
 the inclusion $I_i:K_i \subset K_{i+1}$
  induces null homomorphism on the fundamental groups 
\[(I_i )_*=0:\pi_1 (K_i) \to \pi_1 (K_{i+1})\]
by replacing $\{K_i\}$ by its  subset $\{K_{l_i}\}_i$
for some subindices $\{l_i\}_i$, if necessarily.
This property
 is used when we apply Corollary \ref{gauge} below.

Note that the quasi-cylindrical-end condition requires 
isometric-pasting condition (see Definition \ref{q-cy}).
The latter condition is preserved, if one takes a subset
$\{K_{l_i}\}_i$ as above. 
Hence in Lemma \ref{family} below, one can assume that 
the exhaustion $\{K_i\}_i$ simultaneously satisfy 
the condition that the inclusions of the $K_i$ induce 
null homomorphisms on fundamental groups.
 
\begin{lemma}\label{family}
Suppose $g$ is quasi-cylindrical-end with respect to the exhaustion above.
Then there is a family of Riemannian metrics $\{h_{i}\}_{i \geq 0}$ on $M$ such that
the following properties hold  for any $i$:

$(1)$ $h_i | K_i \equiv g|K_i$, 

$(2)$ vol $(M \backslash K_i, h_i) \leq c$ is uniformly bounded, and

$(3)$ $\{ h_i\}_{i \geq 0}$ is a family of Riemannian metrics on $M$ such that 
 their scalar curvatures are uniformly bounded from below $\kappa_{h_i} \geq -C$.
\end{lemma}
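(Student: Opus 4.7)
The plan is to keep $g$ on $K_i$ and glue a fixed ``capping metric'' onto the complement $M\setminus\mathrm{int}(K_i)$; the isometric-pasting hypothesis is used precisely so that the geometric gluing data at $\partial K_i$ is independent of $i$, and a single model cap suffices for all indices.

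First I would fix once and for all a smooth extension of $g|_{K_0}$ to a Riemannian metric $h$ on $M$, arranged so that $h$ matches $g$ smoothly across a bicollar of $\partial K_0$: the inner half is $N_\epsilon(\partial K_0)\subset K_0$ with the given collar metric of $g$, while the outer half is an outward tubular neighborhood of $\partial K_0$ in $M\setminus\mathrm{int}(K_0)$ on which one extends the collar metric of $g$ smoothly across the boundary. Setting $W:=M\setminus\mathrm{int}(K_0)$ and $h^W:=h|_W$, one obtains a compact Riemannian model cap with fixed $\vol(W,h^W)<\infty$ and with $\kappa_{h^W}$ bounded below on the compact $W$.

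Next, for each $i$ I would construct a diffeomorphism $\rho_i:M\setminus\mathrm{int}(K_i)\to W$ whose restriction to a boundary collar realizes the isometry $\Phi_i^{-1}:N_\epsilon(\partial K_i)\to N_\epsilon(\partial K_0)$ induced by $\Phi_i:=\phi_{i-1}\circ\cdots\circ\phi_0$; granting this, I set $h_i:=g$ on $K_i$ and $h_i:=\rho_i^{*}h^W$ on $M\setminus\mathrm{int}(K_i)$. The isometric collar matching guarantees that $h_i$ is smooth across $\partial K_i$ and so defines a Riemannian metric on all of $M$. Property (1) is then built in; for (2) one has $\vol(M\setminus K_i,h_i)=\vol(W,h^W)$, a uniform constant; and for (3), $\kappa_{h_i}=\kappa_g$ on $K_i$ is uniformly bounded below by the positivity-of-scalar-curvature hypothesis carried by $(*)$ in the intended application, while $\kappa_{h_i}=\kappa_{h^W}$ on $M\setminus\mathrm{int}(K_i)$ is bounded below by the previous step.

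The hard part is the construction of the diffeomorphism $\rho_i$ with the prescribed isometric-collar behaviour, since the complements $M\setminus\mathrm{int}(K_i)$ are a priori different subsets of $M$ and need not be diffeomorphic to $W$ in an abstract sense. For the principal example $X=M\setminus D$ with $D\subset M$ a smoothly embedded $4$-disc, for sufficiently large $i$ the closure $\overline{M\setminus K_i}$ is a regular neighborhood of $D$, hence diffeomorphic to $D^4\cong W$ by the tubular neighborhood theorem, and the required isometric matching near $\partial K_i$ is arranged by precomposing with an ambient isotopy that pushes $\partial K_i$ to $\partial K_0$ realizing $\Phi_i^{-1}$ on the collar. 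The finitely many remaining small indices are handled by ad hoc extensions, which do not affect the uniformity of the constants $c$ and $C$.
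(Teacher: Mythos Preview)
Your capping-metric construction on $W:=M\setminus\mathrm{int}(K_0)$ and the gluing scheme are right, and you correctly locate the obstruction: there is no a priori reason for $M\setminus\mathrm{int}(K_i)$ to be diffeomorphic to $W$, so the map $\rho_i$ you want may simply fail to exist. Your fallback to the principal example does not rescue this. In that example $D\subset M$ is only a \emph{topological} $4$-disc; were it smoothly standard, $X$ would be diffeomorphic to $X'$ and there would be nothing to prove. Hence one cannot speak of tubular or regular neighbourhoods of $D$, and the ``ambient isotopy that pushes $\partial K_i$ to $\partial K_0$ realizing $\Phi_i^{-1}$ on the collar'' is exactly what is in doubt.

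The paper bypasses this by exploiting the \emph{full} diffeomorphism $\Phi_i=\phi_{i-1}\circ\cdots\circ\phi_0:K_0\to K_i$ supplied by the quasi-cylindrical data, not merely its collar restriction. Rather than seek $\rho_i$ inside the fixed $M$, one forms the abstract glued manifold
\[
M_i \;:=\; (M\setminus K_0')\ \sqcup_{\Phi_i}\ K_i,\qquad K_0':=K_0\setminus N_\epsilon(\partial K_0),
\]
along the collar isometry, and sets $h_i:=g$ on the $K_i$ piece and $h_i:=h_0$ on the $M\setminus K_0'$ piece; smoothness across the seam is exactly your collar-matching observation. The point is that $M_i\cong M$ tautologically: the map $\Phi_i^{-1}\cup\mathrm{id}$ sends $M_i$ onto $K_0\cup(M\setminus K_0')=M$, and it is a diffeomorphism precisely because $\Phi_i$ is a diffeomorphism of all of $K_0$ onto $K_i$. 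Properties (2) and (3) then follow as you argued, the cap being the fixed compact Riemannian piece $(M\setminus K_0',h_0)$ independent of $i$. No comparison of $M\setminus\mathrm{int}(K_i)$ with $W$ is ever made, and the argument works for arbitrary quasi-cylindrical $(X,g)$.
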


\begin{remark}\label{remark5.2}
$(1)$ Note that if a Riemannian manifold
$(M,g)$  has positive scalar curvature, 
then it is uniformly positive if $M$ is compact.
The same thing holds for a non-compact Riemannian manifold, if $g$ is cylindrical-end, or more generally end-periodic.
However, this property does not hold  for the quasi-cylindrical-end case in general.

$(2)$ It follows from the construction of the family of Riemannian metrics $\{h_i\}_{i \geq 0}$
on $M$ that 
the restriction  $(M \backslash K_0, h_i)$ is isometric to
 $(M \backslash K_0, h_0)$.
\end{remark}

\begin{proof}
Recall the notations in Definition \ref{q-cy}
with the data
$\epsilon >0$ and  $\{\phi_i : K_i \cong K_{i+1}\}_{i \geq 0}$.
We consider the   isometries
\[
\Psi_i : = \phi_{i-1} \circ  \dots \circ \phi_0 :  \ 
N_{\epsilon}(\partial K_0) \ \cong  \ N_{\epsilon}(\partial K_i).
\]
For $K_0': = K_0 \backslash N_{\epsilon}(\partial K_0)$, 
we glue the disjoint union
\[
M_i: = \  ( \ M \ \backslash \ K'_0 \ ) \ \sqcup_{\Psi_i}  \ K_i
\]
through the isometry $\Psi_i$.
Note that $\Psi_i$ is extended as a diffeomorphism
$\Psi_i = \phi_{i-1} \circ  \dots \circ \phi_0 :  
K_0 \cong K_i$. Then, 
there is a diffeomorphism
 $\Psi_i: M \cong M_i$ 
by setting
\[
\Psi_i (m); =
\begin{cases}
\Psi_i(m) , & m \in K_0, \\
m ,  & m \in M  \ \backslash \ K_0.
\end{cases}
\]

Then, we define
\[
h_i (x): = 
\begin{cases}
g(x) , & x \in K_i , \\
h_0 (x),  & x \in   M \backslash K_0.
\end{cases}
\]
\end{proof}

\subsection{Positivity of scalar curvature}
Suppose $X$ is spin with 
a complete Riemannian metric $(X.g)$, and let 
 $\nabla$ be the spin connection with the Dirac operator $D$.

\begin{lemma}\label{vanishing}
Let $(X,g)$ be a  quasi-cylindrical-end  manifold and assume that 
the scalar curvature
is  (not neccesarily uniformly)
positive
$\kappa  >0.$
Let $(A,\phi)$ be a solution to the  SW equations
perturbed by a self-dual $2$-form  $u \in \Omega^+_c(K_0)$
with sufficiently small $L^{\infty}$ norm
$||u||_{L^{\infty}} < <1$.

Then $\phi$    is actually zero,
if $\phi \in L^4((X,g); S^+) \cap L^2_{1, loc}$.
\end{lemma}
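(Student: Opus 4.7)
The plan is to adapt the standard compact Weitzenb\"ock argument to our non-compact setting via the cutoff exhaustion $\chi_i$ introduced before Lemma \ref{$L^2$-bound}, with the role of uniform positivity of scalar curvature replaced by pointwise positivity everywhere together with compactness of $K_0$ (where the perturbation $u$ is supported). Combining $D_A^2 = \nabla_A^*\nabla_A + \tfrac{\kappa}{4} + \tfrac{1}{2}F_A^+$ with the first SW equation $D_A\phi = 0$, the second SW equation $F_A^+ = \sigma(\phi) + iu$, and the standard pointwise identity $\langle\sigma(\phi)\phi,\phi\rangle = \tfrac{1}{2}|\phi|^4$, I obtain
\[
\langle \nabla_A^*\nabla_A\phi,\phi\rangle + \frac{\kappa}{4}|\phi|^2 + \frac{|\phi|^4}{4} + \frac{1}{2}\langle iu\cdot\phi,\phi\rangle = 0.
\]

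Multiplying by $\chi_i^2$ and integrating (integration by parts is justified locally by $\phi \in L^2_{1,\mathrm{loc}}$) gives
\[
\int \chi_i^2|\nabla_A\phi|^2 + \int \chi_i^2\frac{\kappa}{4}|\phi|^2 + \int \chi_i^2\frac{|\phi|^4}{4} + \int\chi_i^2\frac{1}{2}\langle iu\phi,\phi\rangle + 2\int \chi_i\langle d\chi_i\otimes\phi,\nabla_A\phi\rangle = 0,
\]
and Cauchy-Schwarz bounds the cutoff error by $\tfrac{1}{2}\int\chi_i^2|\nabla_A\phi|^2 + 2\int|d\chi_i|^2|\phi|^2$. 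To exploit the curvature I set $\delta_0 := \min_{K_0}\kappa$, which is strictly positive since $K_0$ is compact and $\kappa > 0$ pointwise. For $\|u\|_{L^\infty} < \delta_0/2$, the combination $\tfrac{\kappa}{4}|\phi|^2 + \tfrac{1}{2}\langle iu\phi,\phi\rangle$ is non-negative on $K_0$; on $X\setminus K_0$, $u$ vanishes and $\kappa>0$, giving the same sign. All pointwise terms are therefore non-negative, leaving
\[
\int\chi_i^2\frac{|\phi|^4}{4} + \frac{1}{2}\int\chi_i^2|\nabla_A\phi|^2 \leq 2\int|d\chi_i|^2|\phi|^2.
\]

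To close the argument I bound the right-hand side via H\"older,
\[
\int |d\chi_i|^2|\phi|^2 \leq \|d\chi_i\|_{L^\infty}^2 \cdot \mathrm{vol}(\mathrm{supp}\,d\chi_i)^{1/2} \cdot \|\phi\|_{L^4(\mathrm{supp}\,d\chi_i)}^2,
\]
and use the quasi-cylindrical-end assumption to pick $\chi_i$ interpolating across isometric collars of fixed geometry, so the first two factors remain controlled while $\|\phi\|_{L^4(\mathrm{supp}\,d\chi_i)} \to 0$ because $\phi \in L^4(X)$. Passing $i\to\infty$ forces $\int|\phi|^4 = 0$ and hence $\phi = 0$.

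The main obstacle is precisely this control of the cutoff error: on an arbitrary complete $4$-manifold, $\phi \in L^4$ together with $\|d\chi_i\|_{L^\infty}\to 0$ does not force $\int|d\chi_i|^2|\phi|^2\to 0$. It is the isometric-pasting structure underlying the quasi-cylindrical-end condition that keeps the annular collar volumes compatible with the decay of $\|d\chi_i\|_{L^\infty}$ and makes the H\"older estimate close.
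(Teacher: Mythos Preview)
Your argument is correct and follows essentially the same route as the paper: Weitzenb\"ock formula combined with a cutoff supported in $K_i$, with the error term controlled via the isometric collars $N_\epsilon(\partial K_i)$ and the fact that $\phi\in L^4$ forces $\|\phi\|_{L^4(N_\epsilon(\partial K_i))}\to 0$. The only cosmetic difference is that the paper applies Weitzenb\"ock directly to $\chi_i\phi$ (computing $\|D_A(\chi_i\phi)\|_{L^2}^2 = \|d\chi_i\cdot\phi\|_{L^2}^2$ on one side), whereas you write the pointwise identity for $\phi$ and multiply by $\chi_i^2$ before integrating; the resulting error terms are the same up to rearrangement. One small presentational point: the cutoffs you invoke at the outset (those from before Lemma~\ref{$L^2$-bound}, with $\|d\chi_i\|_{L^\infty}\to 0$) are not the ones you actually need---as you yourself note at the end, those generic cutoffs have uncontrolled annular volume and would not close the H\"older estimate. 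The cutoffs that work are the ones pulled back through the isometries $\Psi_i$, for which $\|d\chi_i\|_{L^\infty}$ and $\vol(\supp d\chi_i)$ stay \emph{bounded} (not decaying) while $\|\phi\|_{L^4(\supp d\chi_i)}\to 0$; it would be cleaner to introduce these from the start.
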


\begin{proof}
This is well known if $\phi \in  L^2_1((X,g);S^+)$.

Let us use the same notations as above.
Since each $N_{\epsilon}( \partial K_i)$ is isometric to $N_{\epsilon}( \partial K_0)$,
 for any $\delta >0$, there is some $i_0$ such that
\[
||\phi||_{L^4(N_{\epsilon}( \partial K_i))} < \delta
\]
holds for any $i \geq i_0$.
By Cauchy-Schwartz,  the following estimates hold:
\[
||\phi||_{L^2(N_{\epsilon}( \partial K_i))} \leq \text{Vol} (N_{\epsilon}( \partial K_i))^{\frac{1}{4}}
||\phi||_{L^4(N_{\epsilon}( \partial K_i))} \ < \
\text{Vol} (N_{\epsilon}( \partial K_i))^{\frac{1}{4}} \ \delta.
\]

Let $\chi  \in C^{\infty}_c(K_0)$ be a cut-off function 
which vanishes near the boundary.
Then, we define $\chi_i \in C^{\infty}_c(X)$ by
\[
\chi_i(x)  =
\begin{cases} 
 0 & x \in X \backslash K_i, \\
 (\Psi_i^{-1})^*(\chi) (x) & x \in   N_{\epsilon}(K_i), \\
 1 & x \in K_i \backslash N_{\epsilon}(K_i).
 \end{cases}
 \]
 Since  $D_A(\phi) =0$,
 we have the equality
 \[
 D_A(\chi_i \phi) =  d \chi_i \cdot \phi  + \chi_i D_A(\phi) =   d \chi_i \cdot \phi .
 \]
 Hence 
 \[
 || D_A(\chi_i \phi) ||_{L^2(X)}  \leq C ||\phi||_{L^2( N_{\epsilon}(K_i))} \to 0
 \]
 holds as $i \to \infty$.
 Then,
it follows from Weitzenb\"ock formula that the following equality holds:
\begin{align*}
||D_A & (\chi_i \phi)||_{L^2(X)}^2  = 
<D^2_A(\chi_i \phi) ,    \chi_i \phi >  \\
& = <\nabla^* \nabla(\chi_i \phi) , \chi_i \phi> +  \frac{\kappa}{4} <\chi_i \phi, \chi_i \phi>  + \int_X \frac{\chi_i^2 |\phi|^4}{4} 
+ < u \cdot  \chi_i \phi, \chi_i \phi>  \\
& = ||\nabla (\chi_i \phi)||_{L^2(X)}^2 + \int_X \  \frac{\kappa}{4} |\chi_i \phi|^2 + \int_X \frac{\chi_i^2 |\phi|^4}{4}  +
< u \cdot  \chi_i \phi, \chi_i \phi>  \\
&  \geq 
 ||\nabla (\chi_i \phi)||_{L^2(X)}^2 + \int_{X }  \  \frac{\kappa}{4} |\chi_i \phi|^2 +\int_X \frac{\chi_i^2 |\phi|^4}{4}  -
|| u||_{L^{\infty}}  || \chi_i \phi||^2_{L^2(K_0)} \\
& \geq 
 ||\nabla (\chi_i \phi)||_{L^2(X)}^2 + \int_{X \backslash K_0} \  \frac{\kappa}{4} |\chi_i \phi|^2 +\int_X \frac{\chi_i^2 |\phi|^4}{4}    +
 (\kappa_0 - || u||_{L^{\infty}} ) || \chi_i \phi||^2_{L^2(K_0)} \\
\end{align*}
where $\kappa_0 : =\inf_{x \in K_0} \kappa(x) >0$.
By the assumption, one may assume 
\[
 \inf_{K_0} \ \kappa  \geq  ||u||_{L^{\infty}}.
 \]
Hence, this implies the equality
$
 \phi \equiv 0$, since the left-hand side converges to zero as $i \to \infty$, and the limit-inf of  the right-hand side is at least 
 $\frac{1}{4}||\phi||_{L^4(X)}^4$.
\end{proof}

\subsection{Proof of Theorem \ref{main}}\label{proof of main}
This subsection is devoted to  giving
a  proof of the remainder of  Theorem \ref{main}.

Firstly let us state a general result on 
differential forms on manifolds with boundary.
Let $X_0$ be a compact smooth  manifold with boundary.
Let us equip with a Riemannian metric on $X_0$, and let 
$L^2_l(X_0; \Lambda^k)$ be the Sobolev $l$-space. 

Let $Y_0 \subset X_0$ be an embedding of a compact submanifold
with boundary that  satisfies
 $\partial Y_0 \cap \partial X_0= \phi$.
 The following result is standard.
\begin{lemma}\label{gauge}
Suppose the natural map
$\pi_1(Y_0 ) \to \pi_1(X_0)$ is zero.

Let $\eta \in  L^2_1(X_0; \Lambda^1)$.
Then there is 
 an exact form $d \mu'  \in L^2_1(Y_0; \Lambda^1)$
such that
$$\omega : = \eta -  d \mu' \in L^2_1(Y_0; \Lambda^1)$$
satisfies the lower bound
$$||d  \omega||_{L^2(X_0)} \geq c ||\omega||_{L^2(Y_0)}$$
with $d^*(\omega)=0$.
\end{lemma}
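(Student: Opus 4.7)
The plan is to construct the gauge $\mu'$ by combining a global Hodge decomposition of $\eta$ on the ambient manifold $X_0$ with a topological correction supplied on $Y_0$.

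First, on the compact Riemannian manifold with boundary $X_0$ I would apply the Hodge--Morrey--Friedrichs decomposition with absolute boundary conditions:
\[
\eta \;=\; d\phi_0 \,+\, \xi \,+\, h_0,
\]
with $\phi_0 \in L^2_2(X_0)$, $\xi \in L^2_1(X_0;\Lambda^1)$ co-exact and $L^2$-orthogonal to the harmonic space, and $h_0$ a smooth harmonic $1$-form representing a class in $H^1(X_0;\R)$. Since $d(d\phi_0)=0$ and $dh_0=0$ we have $d\xi=d\eta$, while $d^*\xi=0$ holds automatically, so $(d+d^*)\xi = d\eta$. The operator $d+d^*$ on the $L^2$-orthogonal complement of the (finite-dimensional) harmonic space is elliptic and injective under the chosen boundary conditions, and the standard Rellich--Fredholm argument yields the global Poincar\'e-type estimate
\[
\|\xi\|_{L^2(X_0)} \;\leq\; C\,\|(d+d^*)\xi\|_{L^2(X_0)} \;=\; C\,\|d\eta\|_{L^2(X_0)}.
\]

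Next, I would exploit the topological hypothesis to render $h_0$ exact on $Y_0$. Vanishing of $\pi_1(Y_0)\to\pi_1(X_0)$ abelianizes to the statement that $H_1(Y_0;\Z)\to H_1(X_0;\Z)$ is the zero map, and by universal coefficients with real coefficients the dual restriction $H^1(X_0;\R)\to H^1(Y_0;\R)$ is likewise zero. Therefore the smooth closed $1$-form $h_0|_{Y_0}$ has trivial de Rham class on $Y_0$, so $h_0|_{Y_0}=d\psi$ for some $\psi\in C^\infty(Y_0)\subset L^2_2(Y_0)$.

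Finally, I would set $\mu' := \phi_0|_{Y_0}+\psi \in L^2_2(Y_0)$, giving
\[
\omega \;:=\; \eta - d\mu' \;=\; \xi|_{Y_0} \;\in\; L^2_1(Y_0;\Lambda^1).
\]
Locality of $d^*$, together with the fact that the metric on $Y_0$ is the restriction of that on $X_0$, gives $d^*\omega=(d^*\xi)|_{Y_0}=0$. Interpreting $d\omega$ on $X_0$ as the natural extension $d\xi=d\eta$ (which is consistent with $d\omega=d\eta$ on $Y_0$ coming from $d^2\mu'=0$), the $L^2$ norms compare as
\[
\|\omega\|_{L^2(Y_0)} \;=\; \|\xi|_{Y_0}\|_{L^2(Y_0)} \;\leq\; \|\xi\|_{L^2(X_0)} \;\leq\; C\,\|d\eta\|_{L^2(X_0)} \;=\; C\,\|d\omega\|_{L^2(X_0)},
\]
which is the desired inequality with $c=1/C$.

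The main obstacle is analytic: one must set up the Hodge--Morrey--Friedrichs decomposition on the compact manifold with boundary $X_0$ with mutually consistent absolute boundary conditions across the three summands so that they are genuinely $L^2$-orthogonal, and one must verify the elliptic invertibility of $d+d^*$ on the orthogonal complement of the harmonic space with a bound that is free of any lower-order contribution from $\eta$. Once this standard but technical machinery is in place, the topological hypothesis enters at precisely one step through universal coefficients, and no subtle boundary-compatibility issue on $\partial Y_0$ intervenes, because the $L^2(Y_0)$ bound on $\omega$ is transferred from the global estimate on $X_0$ rather than proved intrinsically on $Y_0$.
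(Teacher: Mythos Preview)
Your proposal is correct and follows essentially the same route as the paper: both apply Hodge theory with Neumann (absolute) boundary conditions on $X_0$ to split $\eta$ into an exact piece, a co-closed piece $\xi$ satisfying a Poincar\'e-type estimate $\|\xi\|_{L^2(X_0)}\le C\|d\eta\|_{L^2(X_0)}$, and a Neumann-harmonic piece $h_0\in\mathcal H^1_N(X_0)\cong H^1(X_0;\R)$, and then use the $\pi_1$ hypothesis to write $h_0|_{Y_0}=d\psi$ and absorb it into the gauge. The paper packages the first step through the explicit Neumann potential $\phi_N$ (Corollary \ref{N-normal-form} and Proposition \ref{N-keylemma} from Schwarz), while you invoke the Hodge--Morrey--Friedrichs decomposition directly; these are the same machinery in different notation.
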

This is based on Hodge theory 
on  manifolds with boundary
 \cite{schwarz, wehrheim}.
See also the Appendix.

Let us give a proof of the remainder of  Theorem \ref{main}.

\vspace{2mm}

{\bf Step 1:}
Let $M$ be a $K3$ surface and denote 
$X' : =3(S^2 \times S^2) \backslash \text{pt}$.

\begin{lemma} Let $M$ be as above. Then 
 there exists an open subset 
$X\subset M$ such that $X$ is homeomorphic to $X'$,   but is not
diffeomorphic to the latter manifold with respect to the induced smooth structure
by the embedding $X \subset M$.
\end{lemma}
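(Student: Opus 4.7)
The plan is to combine Freedman's topological classification of closed simply-connected $4$-manifolds with Donaldson's diagonalization theorem: the former supplies the topological embedding $X \hookrightarrow M$, while the latter rules out a smooth one.

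First I would realize $X$ as an open subset of $M$. Since $M = K3$ is smooth, simply-connected, and spin with intersection form $-2E_8 \oplus 3H$, its Kirby-Siebenmann invariant vanishes. Let $|E_8|$ denote Freedman's closed topological $4$-manifold realizing the form $-E_8$. The topological connect sum $|E_8| \sharp |E_8| \sharp S$ has intersection form $-2E_8 \oplus 3H$ and vanishing Kirby-Siebenmann invariant (the two copies of $|E_8|$ cancel modulo $2$, and $S$ is smooth). Freedman's classification then produces a homeomorphism $h \colon M \to |E_8| \sharp |E_8| \sharp S$. The standard connect-sum decomposition of the target along a topologically locally flat separating $3$-sphere exhibits an open subset homeomorphic to $S \setminus D^4 \cong X'$. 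I define $X$ to be the $h$-preimage of this subset; then $X$ is open in $M$, inherits the smooth structure from $M$, and is homeomorphic to $X'$.

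For the non-diffeomorphism, suppose toward a contradiction that a smooth diffeomorphism $\phi \colon X \to X'$ exists. Choose a small smoothly embedded round sphere $\Sigma' \subset X' \subset S$ encircling the missing point $\ast \in S$, and set $\Sigma := \phi^{-1}(\Sigma')$, a smoothly embedded $3$-sphere in $M$. Since $H_3(M;\Z) = 0$, $\Sigma$ separates $M$ into two compact smooth $4$-submanifolds sharing the smooth boundary $\Sigma$. On one side, $W := \phi^{-1}(S \setminus \mathring{B})$ (with $B \subset S$ the open ball bounded by $\Sigma'$ containing $\ast$) is diffeomorphic to $S$ minus an open $4$-ball and has intersection form $3H$. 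The other piece $V := M \setminus \mathring{W}$ is a compact smooth $4$-submanifold of $M$ with smooth boundary $\Sigma$. Topologically, $V$ is the Freedman piece $M \setminus X \cong |E_8| \sharp |E_8| \setminus \mathring{D^4}$ with the collar $\phi^{-1}(\overline{B} \setminus \{\ast\}) \cong S^3 \times (0,1]$ attached along the Freedman sphere $\partial X$, which does not alter the topological type. Hence $V$ is homeomorphic to $|E_8| \sharp |E_8| \setminus \mathring{D^4}$ and has intersection form $-2E_8$. Capping $V$ smoothly by a $4$-ball along the smooth $\Sigma$ yields a closed smooth simply-connected $4$-manifold with intersection form $-2E_8$, contradicting Donaldson's theorem that every closed smooth negative-definite $4$-manifold has diagonalizable intersection form.

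The main delicate point I anticipate is the topological identification of $V$ with $|E_8| \sharp |E_8| \setminus \mathring{D^4}$, in particular confirming that the Freedman sphere $\partial X$ and the smoothly embedded $\Sigma$ cobound a topological shell $S^3 \times [0,1]$ inside $X$. This reduces to the standard topological uniqueness statement that any topologically locally flat $3$-sphere inside $X'$ which bounds a punctured $4$-ball at $\ast$ is topologically isotopic to a round sphere near $\ast$; once this is established, the argument collapses to a routine collaring, and the intersection form of $V$ coincides with that of $|E_8| \sharp |E_8|$ minus a ball.
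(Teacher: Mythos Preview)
Your argument is correct and follows the same route the paper takes by citation: Freedman's classification supplies the topological splitting $M \cong 2|{-E_8}| \sharp 3(S^2 \times S^2)$ and hence the open set $X$, while Donaldson's diagonalization theorem (via \cite{freed and uhlenbeck}, \cite{donaldson and kronheimer}) obstructs the smooth version. The ``delicate point'' you flag can be bypassed entirely: once the smooth $3$-sphere $\Sigma$ separates $M$ into $V \cup_{\Sigma} W$, van Kampen gives $\pi_1(V)=1$ and Mayer--Vietoris gives $H_2(M) \cong H_2(V) \oplus H_2(W)$ as an orthogonal splitting of intersection forms, so the form on $V$ is an even negative-definite sublattice of rank $16$ (even because it sits inside the even form of $M$) --- already enough to contradict Donaldson after capping, with no need to identify $V$ topologically with $|E_8|\sharp|E_8|\setminus \mathring{D}^4$.
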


\begin{proof}
 Actually there is a topological decomposition
$M \cong 2|-E_8| \sharp 3(S^2 \times S^2)$, and $X$ is obtained as an open subset of 
the complement of $2|-E_8| $ term.
 See \cite{freed and uhlenbeck},  \cite{donaldson and kronheimer}.
 \end{proof}
 
The required properties  have been given
for $X'$
 in  the  Introduction.
 We now focus on $X$.

The following is known (see \cite{morgan}).
\begin{lemma}\label{K3}
The Seiberg-Witten invariant is non zero over $M$ with respect to the spin structure.
\end{lemma}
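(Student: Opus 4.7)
The plan is to reduce to Witten's calculation of Seiberg--Witten invariants on K\"ahler surfaces. By Yau's theorem, the $K3$ surface $M$ admits a Ricci-flat K\"ahler metric; moreover, its canonical bundle is trivial, $K_M = \mathcal{O}_M$, and its geometric genus is $p_g=1$, so $b_2^+ = 2p_g + 1 = 3 > 1$. Consequently, the Seiberg--Witten invariant associated with any spin$^c$ structure $\frak{L}$ is a well-defined integer independent of the choice of metric and generic perturbation, so we are free to work with the K\"ahler metric and a K\"ahler-adapted perturbation of the form $v^+ = i r \omega$ with $\omega$ the K\"ahler form and $r$ a real constant.

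The spin structure on $M$ induces the spin$^c$ structure $\frak{L}_0$ whose determinant line bundle $L$ is trivial. Because $K_M$ is trivial, this $\frak{L}_0$ coincides with the \emph{canonical} spin$^c$ structure on the K\"ahler surface $M$, for which the positive spinor bundle decomposes as
$$ S^+ \;\cong\; \Lambda^{0,0} \,\oplus\, \Lambda^{0,2}. $$
I would then carry out the standard Witten analysis (as in \cite{morgan}, Chapter 7): writing $\phi = (\alpha, \beta) \in \Lambda^{0,0}\oplus \Lambda^{0,2}$ and decomposing the curvature equation along $\omega$ and along $\Lambda^{2,0}\oplus\Lambda^{0,2}$, the perturbed SW system becomes a pair of coupled equations whose only solutions with the correct sign of $r$ consist of a constant nonvanishing $\alpha$, vanishing $\beta$, and the trivial connection on $L$.

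Thus the moduli space $\frak{M}_*((M,h),v^+)$ for $\frak{L}_0$ reduces to a single point, and the standard orientation/sign computation shows that the algebraic count equals $\pm 1$. The main technical step is the vanishing argument for $\beta$ and the verification that there are no other solutions: this uses integration of the K\"ahler identities together with the topological constraint $c_1(L)=0$ (which makes the "extra" holomorphic divisor data trivial), and is precisely the content of Witten's computation for K\"ahler surfaces with $p_g>0$. Since the invariant is stable under changes of metric and perturbation, this suffices to conclude
$$ SW(M,\frak{L}_0) \;=\; \pm 1 \;\neq\; 0, $$
as claimed. The only nontrivial obstacle is the explicit identification of the moduli space; this is classical and we cite \cite{morgan} for the full argument.
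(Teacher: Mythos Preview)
Your outline is correct and is precisely the standard Witten computation for K\"ahler surfaces recorded in \cite{morgan}, Chapter~7. Note, however, that the paper does not actually prove this lemma: it simply states it as a known fact with the parenthetical reference ``(see \cite{morgan})'' and no further argument. So there is no independent proof in the paper to compare against; your sketch reproduces the content of the cited source, which is exactly what the author is relying on.
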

We shall  deduce a contradiction, assuming that the above $X$ admits a complete Riemannian metric
which  satisfies the conditions $(*)$ in Theorem \ref{main}.

\vspace{2mm}

{\bf Step 2:}
Let $(X,g)$ be a quasi-cylindrical-end  Riemannian $4$-manifold whose scalar curvature is positive, and
let us  take any  non zero $L^2$ harmonic self-dual  $2$-form $u$  on $(X,g)$,
which is exact at infinity.
Let $v^+  \in  \Omega^+_c(K_0)$  be the self-dual $2$-form in Corollary \ref{cut-off}.

Take a family of  metrics $h_i$ on $M$ as in Lemma \ref{family}.
The (perturbed) SW invariant is invariant for any choice of generic Riemannian metric and perturbation.
Hence, there is a solution to any metric $h_i$ and perturbation by Lemma \ref{K3}.
Let  $(A_i =\nabla +ia_i, \phi_i)$ be 
  a solution   to the perturbed SW equation by $v^+$
with respect to $(M,h_i)$.
It  obeys the equation
\[
i d^+a_i - \sigma(\phi_i) = \sqrt{-1}  v^+.
\]

\vspace{2mm}

{\bf Step 3:}
It follows from Proposition \ref{sc-infty} $(1)$ and Lemma \ref{family} 
 that there is a constant $C$ such that
the uniform bounds
\[
||\phi_i||_{L^4(K_i)}, \quad ||da_i||_{L^2(K_i)} \ \leq  \ C
\]
hold.

Let us  fix $i_0$.
It follows from  Lemma \ref{gauge} 
with Remark \ref{gauge'} that after gauge transform, 
the estimates
$$||a_i||_{L^2(K_{i_0})} \leq C_{i_0} ||da_i||_{L^2(K_{i_0+1})}\leq C_{i_0}'$$
hold for  some constants $C_{i_0}$ and $C_{i_0}'$, and $i \geq i_0+1$.
Moreover one may assume the gauge-fixing
$$d^*(a_i)=0.$$
 Hence we obtain the $L^2_1$ bound
$$||a_i||_{L^2_1(K_{i_0})} \leq C_{i_0}''$$
by the elliptic estimate.

\vspace{2mm}

{\bf Step 4:}
Since $(A_i, \phi_i)$ is a solution to the perturbed SW equation,  the equality
$$0= D_{A_i}(\phi_i) = D(\phi_i) + a_i \cdot \phi_i$$
holds. Thus, we obtain the estimates
\begin{align*}
||D(\phi_i)||_{L^2(K_{i_0})} & \leq || a_i \cdot \phi_i||_{L^2(K_{i_0})}
\leq  || a_i ||_{L^4(K_{i_0})} || \phi_i||_{L^4(K_{i_0})} \\
& \leq C_{i_0} || a_i ||_{L^2_1(K_{i_0})} || \phi_i||_{L^4(K_{i_0})}  \leq C_{i_0}'
 \end{align*}
 using the Sobolev embedding
 $L^2_{1,loc} \hookrightarrow L^4_{loc}$.

Again by  the elliptic estimate, we obtain the uniform  bound
$$||\phi_i||_{L^2_1(K_{i_0})} \leq C_{i_0}.$$

\vspace{2mm}

{\bf Step 5:}
It is well known that the perturbed SW solution admits an $L^{\infty}$ bound
$$||\phi_i||_{L^{\infty}(M)} \leq \sup_{m \in M} \ \max(0, -\kappa_i(m) + ||v^+||_{L^{\infty}}) \ \leq \ C$$
(see  \cite{morgan} page $77$, proof of Corollary $5.2.2$).
Since $F^+_{A_i} = \phi_i \otimes \phi_i^* - \frac{1}{2}|\phi_i|^2 \text { id} + \sqrt{-1} v^+ \cdot $ holds, 
the equality 
$$\nabla F_{A_i}^+ = 
\nabla (\phi_i) \otimes \phi_i^* +   \phi_i \otimes \nabla(\phi_i^*)  
-  <\nabla(\phi_i), \phi_i> \text {id} + \sqrt{-1} \nabla v^+ \cdot  $$
holds.
Hence we have the estimates
$$||\nabla F_{A_i}^+||_{L^2(K_{i_0})}
 \leq C||\phi_i||_{L^{\infty}(M)} ||\nabla (\phi_i)||_{L^2(K_{i_0})}  + ||\nabla v^+||_{L^2(K_0)}\  \leq \ C_{i_0}'.$$
Then it follows from Step $3$ with the elliptic estimate that  the bound
$$||a_i||_{L^2_2(K_{i_0})} \  \leq \ C_{i_0}$$
holds, since $F^+_{A_i} = \sqrt{-1} d^+a_i$ and  $d^*a_i =0$ holds by Step $3$.

In summary, we have the estimates as below
$$\begin{cases}
& ||a_i||_{L^2_2(K_{i_0})} \  \leq \ C_{i_0}, 
\ \ ||da_i||_{L^2(K_i)} \leq C, \\
& ||\phi_i||^2_{L^2_1(K_{i_0})} \leq C_{i_0}, \ \ 
 ||\phi_i||_{L^4(K_i)} \leq C.
\end{cases}$$

{\bf Step 6:}
By Steps $3$ and  $4$ with local compactness of the Sobolev embedding,
we can choose a subsequence of spinors so that they converge
to $\phi \in L^4((X,g);S^+)$ on each compact subset. Moreover, 
the subsequence  is  locally in $L^2_1$.

By Steps $3$  and  $5$ with local compactness of the Sobolev embedding,
we can choose a subsequence of $1$-forms  so that they converge
to $a \in (L^2_1)_{loc}((X,g);\Lambda^1)$ on each compact subset. Moreover 
$da$ is in  $L^2((X,g); \Lambda^2)$.

Since $(d+a, \phi)$ is a solution to the perturbed SW equation by $v^+$ with respect to $(X,g)$,
we  conclude $\phi \equiv 0$ by Lemma \ref{vanishing}.

Hence, a subsequence $\{d^+a_i\}_i$  will  converge  to $v^+$ in $L^2$ on each compact subset.
However, this contradicts  Corollary  \ref{cut-off}, 
completing  the proof of Theorem \ref{main}.

\section{Functional spaces}\label{Functional spaces}
Let $(X,g)$ be a complete Riemannian  spin $4$-manifold 
which is simply connected and simply connected at infinity.
Let us take  
  exhaustion by compact subsets
$K_0  \Subset K_1 \Subset \dots \Subset \dots \Subset X.$
We also fix a family of constants
$$1 \leq C_0 \leq C_1 \leq \dots  \leq C_{i_0} \leq \dots \to \infty.$$

Note that we do not assume `bounded-geometry', and hence we need 
care when we introduce Sobolev spaces.
We use the Levi-Civita connection and the spin connection to 
equip with the Sobolev spaces. Hence we may assume that the estimate
\[
|| \nabla \phi||_{L^2(K_i)} \leq C_i ||\phi||_{L^2_1(K_i)}
\]
holds where $\nabla$ is the spin connection.

\begin{remark}\label{q-cy-case}
Later when we consider   a case of a quasi-cylindrical $4$-manifold
with positive scalar curvature, 
 we will choose the associated  exhaustion and 
  constants 
which have appeared  at $(*)$  in Step $5$ of  the proof of Lemma \ref{K3}. 
\end{remark}

We will choose these constants so that:

\vspace{2mm}

$(1)$
$ \vol (K_i) \leq C_i^2$ holds, 
 and 

\vspace{2mm}

$(2)$
the Poincar\'e inequality
$$||f - c_f||_{L^2(K_i)} \leq C_i ||df||_{L^2(K_i)}$$
holds, where
$$c_f := \frac{1}{ \vol (K_i)} \int_{K_i} \ f \vol.$$
See Corollary \ref{N-coclosed-est}. Note that $\mathcal H^0_N(X_0)$ consists of 
constant functions.

\begin{definition} Let us introduce the following
 function spaces.

 $(1)$  $\mathcal D_1$ and $\mathcal D_0$ on spinors 
are given by completion of 
compactly supported smooth sections by the norms
\begin{align*}
&||  \phi  ||^2_{\mathcal D_1} : = 
 || \phi ||^2_{L^4(X)}
+ \sum_{i=0}^{\infty} \  
 \frac{1}{2^{i}C_{i}^2}  ||\phi ||^2_{L^2_1(K_{i})}, \\
 &
 ||  \phi  ||^2_{\mathcal D_0} : = 
  \sum_{i=0}^{\infty} \  
 \frac{1}{2^{i}C_{i}^2}  ||\phi ||^2_{L^2(K_{i})}.
\end{align*}

$(2)$  $\mathcal L_1$ on  one forms 
are given by completion of 
compactly supported smooth sections by the norm
\begin{align*}
||  a  ||^2_{\mathcal L_1} : = 
||d a ||^2_{L^2(X)} +
\sum_{i=0}^{\infty} \  
 \frac{1}{2^{i}C_{i}^2}
 ||a||^2_{L^4(K_{i})}
.
\end{align*}
 \end{definition}

\begin{prop}\label{SW}
The  SW map
$$\mathcal SW:
 {\mathcal D}_1(X) \times {\mathcal L}_1(X)  \to 
 {\mathcal D}_0 (X) \times L^2(X;  i \Lambda^+)$$
given by
$$(a, \phi) \to (D_{\nabla+a}(\phi) , F_{A_0+a}^+ - \sigma(\phi))$$
is continuous.
\end{prop}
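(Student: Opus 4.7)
The plan is to decompose the nonlinear map $\mathcal{SW}$ into its constant, linear, bilinear and quadratic constituents, and to verify an operator bound for each piece with respect to the prescribed weighted norms. Writing
\[
\mathcal{SW}(a,\phi) \;=\; \bigl(D(\phi) + a\cdot\phi,\ F_{A_0}^+ + d^+ a - \sigma(\phi)\bigr),
\]
continuity of $\mathcal{SW}$ reduces to operator-norm bounds on each summand, since a bounded linear map is continuous and a jointly bounded bilinear map is jointly continuous on product normed spaces.

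For the second target factor $L^2(X;i\Lambda^+)$: the shift $F_{A_0}^+$ is a fixed $L^2$ form by the hypothesis on the smooth background connection $A_0$; the linear piece satisfies $\|d^+a\|_{L^2(X)} \leq \|da\|_{L^2(X)} \leq \|a\|_{\mathcal L_1}$ directly from the definition of $\mathcal L_1$; and the Clifford term satisfies $|\sigma(\phi)|\leq c|\phi|^2$ pointwise, giving $\|\sigma(\phi)\|_{L^2(X)} \leq c\|\phi\|_{L^4(X)}^2 \leq c\|\phi\|_{\mathcal D_1}^2$. These three estimates imply continuity of the second component.

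For the first target factor $\mathcal D_0$, the Dirac piece is handled by the pointwise Clifford estimate $|D\phi|\leq c|\nabla\phi|$ together with the spin-connection definition of $L^2_1$, which give $\|D\phi\|_{L^2(K_i)} \leq c\|\phi\|_{L^2_1(K_i)}$. Inserting this into the weight $\tfrac{1}{2^i C_i^2}$ of $\mathcal D_0$ and summing yields
\[
\|D\phi\|_{\mathcal D_0}^2 \;\leq\; c^2\sum_{i=0}^{\infty}\frac{1}{2^iC_i^2}\|\phi\|_{L^2_1(K_i)}^2 \;\leq\; c^2\|\phi\|_{\mathcal D_1}^2,
\]
so the common weight $\tfrac{1}{2^iC_i^2}$ on $L^2_1(K_i)$ in $\mathcal D_1$ and on $L^2(K_i)$ in $\mathcal D_0$ is exactly what makes $D$ bounded. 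For the bilinear multiplication term, local H\"older on $K_i$ gives $\|a\cdot\phi\|_{L^2(K_i)} \leq \|a\|_{L^4(K_i)}\|\phi\|_{L^4(K_i)} \leq \|a\|_{L^4(K_i)}\|\phi\|_{L^4(X)}$, and summing against the weights yields
\[
\|a\cdot\phi\|_{\mathcal D_0}^2 \;\leq\; \|\phi\|_{L^4(X)}^2\sum_{i=0}^{\infty}\frac{\|a\|_{L^4(K_i)}^2}{2^iC_i^2} \;\leq\; \|\phi\|_{\mathcal D_1}^2\,\|a\|_{\mathcal L_1}^2.
\]

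The main subtlety is that the weighted norms have been designed with matching tails: the Dirac step requires the tails of $\mathcal D_1$ and $\mathcal D_0$ to carry the same weight $\tfrac{1}{2^iC_i^2}$, while the multiplication step requires exactly the weight $\tfrac{1}{2^iC_i^2}$ on the $L^4(K_i)$-tail of $\mathcal L_1$ to pair against the unweighted $L^4(X)$-term in $\mathcal D_1$. Once these compatibilities are in hand, combining the bounded linear, bilinear and quadratic maps above gives continuity of the full nonlinear $\mathcal{SW}$.
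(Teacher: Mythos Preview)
Your proof is correct and follows essentially the same approach as the paper's: decompose $\mathcal{SW}$ into its linear, bilinear and quadratic pieces and bound each one using H\"older on $K_i$ together with the matching weights in $\mathcal D_1$, $\mathcal L_1$, $\mathcal D_0$. The paper singles out the Clifford multiplication $(a,\phi)\mapsto a\cdot\phi$ as ``the only thing to be checked'' and proves exactly your estimate $\|a\cdot\phi\|_{\mathcal D_0}\leq\|a\|_{\mathcal L_1}\|\phi\|_{\mathcal D_1}$, treating the $\sigma(\phi)$ and $D\phi$ bounds as immediate; your version simply spells those out as well.
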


\begin{proof}
Note the estimates
\begin{align*}
& ||\sigma(\phi)||_{L^2(X)}  \leq | |\phi||^2_{L^4(X)} \leq ||\phi||^2_{\mathcal D_1}, \\
& ||\nabla \phi||_{L^2(K_i)} \leq C_i ||\phi||_{L^2_1(K_i)}.
\end{align*}

The only thing to be checked is continuity of the Clifford multiplication
$${\mathcal L}_1(X) \times \mathcal D^+_1(X) \to \mathcal D^-(X)$$
given by the Clifford multiplication
$(a, \phi) \to a \cdot \phi$.
By Cauchy-Schwartz, we obtain the estimates
\begin{align*}
\frac{1}{C_i} ||a \cdot \phi||_{L^2(K_i)} &
\leq \frac{1}{C_i} ||a||_{L^4(K_i)}||\phi||_{L^4(K_i)}
\leq \frac{1}{C_i} ||a||_{L^4(K_i)}||\phi||_{L^4(X)}.
\end{align*}
This implies continuity of the multiplication
$$||a \cdot \phi||_{\mathcal D_0} \leq ||a||_{\mathcal L_1} ||\phi||_{\mathcal D_1}.$$
\end{proof}

Let us recall subsection \ref{proof of main}.
Assume that a complete Riemannian manifold $(X,g)$ satisfies the following conditions:
\begin{itemize}

\item
It is 
quasi-cylindrical, and

\item it has positive   scalar curvature except a compact subset.
\end{itemize}
Then still the estimates
  $(*)$ in Step $5$ above holds. Hence, 
  we obtain the following property.
Let $(A_i  = \nabla +ia_i, \phi_i)$ be the family of twisted SW solutions under  the metric deformation
as in subsection \ref{proof of main}.

\begin{corollary}
A subsequence of $\{(A_i, \phi_i) \}_i$ converges to a solution 
$
(\nabla +i a, \phi)$
to the perturbed SW equation in Proposition \ref{SW}
with 
\[
(\phi, a) \in 
 {\mathcal D}_1(X) \times {\mathcal L}_1(X) .
 \]
\end{corollary}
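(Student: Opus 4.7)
The strategy is to extract a subsequential limit of the family $(\nabla+ia_j,\phi_j)$ produced in Step 6 of the proof of Theorem \ref{main}, show that the limit lies in $\mathcal D_1(X)\times\mathcal L_1(X)$, and pass to the limit in the perturbed SW equation component-by-component.

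First, Step 5 of the proof of Theorem \ref{main} gave, after gauge fixing, the local uniform bounds $||a_j||_{L^2_2(K_i)}, \, ||\phi_j||_{L^2_1(K_i)}\le C_i$ for all $j\ge i+1$, together with the global bounds $||\phi_j||_{L^4(M,h_j)}, \, ||da_j||_{L^2(M,h_j)}\le C$. By Remark \ref{q-cy-case} the constants $C_i$ in the definition of $\mathcal D_1,\mathcal L_1$ are chosen to be exactly those bounds (enlarged if necessary to dominate the local Sobolev embedding constants $L^2_2(K_i)\hookrightarrow L^4(K_i)$ and $L^2_1(K_i)\hookrightarrow L^4(K_i)$, valid in dimension four). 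A diagonal argument based on local weak compactness in $L^2_1$ and $L^2_2$, combined with the compact embeddings $L^2_{\ell}(K_i)\Subset L^4(K_i)$, yields a subsequence (still denoted $(a_j,\phi_j)$) and limits $\phi\in L^2_{1,\loc}(X;S^+)$ and $a\in L^2_{2,\loc}(X;\Lambda^1)$ with $\phi_j\to\phi$ and $a_j\to a$ weakly in the local Sobolev spaces, strongly in $L^4(K_i)$ for every $i$, and with $da_j\to da$ weakly in $L^2(X)$.

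For membership in $\mathcal D_1(X)\times\mathcal L_1(X)$, weak lower semicontinuity of the norms and Fatou's lemma yield $||\phi||_{L^4(X)},\,||da||_{L^2(X)}\le C$, and $||\phi||_{L^2_1(K_i)},\,||a||_{L^4(K_i)}\le C_i$ for each $i$ (the bound on $a$ via the Sobolev embedding applied to the $L^2_2(K_i)$ control). Substituting into the weighted series
\[
\sum_{i\ge 0}\frac{1}{2^i C_i^2}||\phi||^2_{L^2_1(K_i)}\quad\text{and}\quad\sum_{i\ge 0}\frac{1}{2^i C_i^2}||a||^2_{L^4(K_i)}
\]
produces bounds by a convergent geometric series in $2^{-i}$, so $(\phi,a)\in\mathcal D_1(X)\times\mathcal L_1(X)$.

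To verify that $(\nabla+ia,\phi)$ solves the perturbed SW equation, fix a compact $K_i$ and examine the two components $D(\phi_j)+ia_j\cdot\phi_j=0$ and $i\,d^+a_j-\sigma(\phi_j)=iv^+$. The linear pieces $D(\phi_j)$ and $d^+a_j$ converge weakly in $L^2(K_i)$ to $D(\phi)$ and $d^+a$. The quadratic pieces $a_j\cdot\phi_j$ and $\sigma(\phi_j)$ converge strongly in $L^2(K_i)$ to $a\cdot\phi$ and $\sigma(\phi)$, by strong $L^4(K_i)$ convergence of both factors and Cauchy--Schwartz. Since each $K_i$ exhausts $X$, the limit satisfies the equation on all of $X$. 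The main obstacle is ensuring that the weights $1/(2^i C_i^2)$ are consistent with both the \emph{a priori} SW bounds and the local Sobolev embedding constants on an exhaustion of non-uniform geometry; this is the reason the constants $C_i$ must be fixed at the outset as prescribed by Remark \ref{q-cy-case}, and why in subsection \ref{proof of main} one passes to a subindex sequence $\{K_{l_i}\}$ so that all the required estimates hold simultaneously.
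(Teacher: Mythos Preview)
Your proof is correct and is precisely the argument the paper leaves implicit. The paper gives no separate proof of this corollary: it simply notes that the estimates $(*)$ from Step~5 of subsection~\ref{proof of main} persist, and states the corollary as an immediate consequence of those bounds together with the choice of the constants $C_i$ in the definition of $\mathcal D_1,\mathcal L_1$ (Remark~\ref{q-cy-case}). Your write-up faithfully unpacks this: the diagonal extraction is the content of Step~6, the membership $(\phi,a)\in\mathcal D_1\times\mathcal L_1$ follows because the weighted sums $\sum 2^{-i}C_i^{-2}\,\|\cdot\|^2$ are dominated by $\sum 2^{-i}$ once the local norms are bounded by $C_i$, and the passage to the limit in the equation uses exactly the strong $L^4_{\loc}$ convergence of both factors in the quadratic terms.
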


\subsection{Gauge group}
Let us introduce Gauge group in this functional analytic setting.

\begin{definition} 
 $\mathcal L_2(X)$ 
is given by completion of 
compactly supported smooth functions with the norm
\begin{align*}
||  f  ||^2_{\mathcal L_2} : = 
\sum_{i=0}^{\infty} \  
 \frac{1}{2^{i}C_{i}^2}
 ||df||^2_{L^4(K_{i})} +
 \sum_{i=0}^{\infty} \  
 \frac{1}{2^{i}C_{i}^6}
 ||f||^2_{L^2(K_{i})}
.
\end{align*}

The $U(1)$ gauge group is defined by
$${\frak G}(X) : = \exp( \sqrt{-1} \mathcal L_2(X)).$$
 \end{definition}
 
\begin{remark}
$(1)$
${\frak G}(X)$ is a group and its multiplication is continuous,
since the structure group is abelian.
\vspace{3mm}

$(2)$
Since $d^2 f=0$ holds, the differential
$$d: \mathcal L_2(X) \to \mathcal L_1(X)$$
is continuous.
\end{remark}

\begin{lemma}
The gauge group acts continuously 
$${\frak G}(X) \times \mathcal D_1(X) \to \mathcal D_1(X)$$
on spinors given by
$$(\exp(i f), \phi) \to \exp(i f) \cdot \phi$$
\end{lemma}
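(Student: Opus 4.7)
The plan is to verify continuity in two steps: (i) establish a uniform boundedness estimate $||u\phi||_{\mathcal D_1} \leq C(||f||_{\mathcal L_2})\,||\phi||_{\mathcal D_1}$ for $u = \exp(if)$, and (ii) convert boundedness plus pointwise-a.e.\ convergence of $u_n$ into joint continuity through the standard decomposition $u_n\phi_n - u\phi = u_n(\phi_n-\phi) + (u_n-u)\phi$.

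For step (i), the $L^4(X)$ contribution to $||u\phi||_{\mathcal D_1}$ is immediate from pointwise unitarity $|u|\equiv 1$. For the weighted $L^2_1(K_i)$ contributions, apply the Leibniz rule $\nabla(u\phi) = iu\,df\otimes\phi + u\,\nabla\phi$, and estimate the first summand via H\"older:
$$||df\otimes\phi||_{L^2(K_i)} \leq ||df||_{L^4(K_i)}\,||\phi||_{L^4(K_i)} \leq ||df||_{L^4(K_i)}\,||\phi||_{L^4(X)}.$$
Summing with the weights $2^{-i}C_i^{-2}$, the $\nabla\phi$-part reassembles to $||\phi||^2_{\mathcal D_1}$, while the H\"older term gives $C\,||\phi||_{L^4(X)}^2\,||f||_{\mathcal L_2}^2$, because the first component of $||f||^2_{\mathcal L_2}$ is precisely $\sum_i 2^{-i}C_i^{-2}||df||^2_{L^4(K_i)}$. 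Altogether $||u\phi||_{\mathcal D_1}^2 \leq (1 + C||f||_{\mathcal L_2}^2)\,||\phi||_{\mathcal D_1}^2$.

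For step (ii), suppose $f_n \to f$ in $\mathcal L_2$ and $\phi_n \to \phi$ in $\mathcal D_1$. The summand $u_n(\phi_n - \phi)$ tends to zero in $\mathcal D_1$ by step (i) applied to $\phi_n - \phi$, since $||f_n||_{\mathcal L_2}$ remains bounded. For the summand $(u_n - u)\phi$, pass to a subsequence along which $f_n \to f$ almost everywhere, and use the pointwise bound $|u_n - u| \leq \min(2, |f_n - f|)$. The $L^4(X)$-norm then converges to zero by dominated convergence with majorant $2|\phi|\in L^4(X)$. For each weighted $L^2_1(K_i)$-norm, expand $\nabla((u_n - u)\phi) = i(u_n\,df_n - u\,df)\otimes\phi + (u_n - u)\,\nabla\phi$ and apply dominated convergence term by term: the $L^4(K_i)$-convergence of $df_n$ coming from the $\mathcal L_2$-convergence, paired with H\"older against $\phi \in L^4(X)$, controls the first summand, while a.e.-convergence of $u_n$ with the uniform bound by $2$ controls the second.

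The principal obstacle is the interchange of the limit with the infinite sum defining the $\mathcal D_1$-norm of $(u_n - u)\phi$: although each fixed-$i$ contribution tends to zero, one must justify convergence of the series. This will follow by a dominated-convergence-in-$\ell^1$ argument, bounding the $i$-th summand uniformly in $n$ by a constant multiple of $2^{-i}C_i^{-2}(||df||^2_{L^4(K_i)} + \sup_n ||df_n||^2_{L^4(K_i)})\,||\phi||^2_{L^4(X)}$ plus $2^{-i}C_i^{-2}\,||\nabla\phi||^2_{L^2(K_i)}$, both of which are absolutely summable thanks to $\phi \in \mathcal D_1$ and the boundedness of $\{f_n\}$ in $\mathcal L_2$. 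The auxiliary $L^2$-component of the $\mathcal L_2$-norm, carrying the heavier weight $2^{-i}C_i^{-6}$, is not needed here but will enter later when one wishes to quantify convergence without passing to subsequences.
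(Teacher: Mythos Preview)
Your step (i) is precisely the paper's entire argument: the paper applies the Leibniz rule and H\"older exactly as you do, obtains the bound
\[
\frac{1}{C_i^2}\|df\otimes e^{if}\phi\|_{L^2(K_i)}^2 \leq \frac{1}{C_i^2}\|df\|_{L^4(K_i)}^2\|\phi\|_{L^4(X)}^2,
\]
and then simply concludes that $e^{if}\phi\in\mathcal D_1(X)$. The paper stops there; it does not carry out anything like your step (ii), so in effect the paper proves only well-definedness (equivalently, boundedness of the bilinear map) and leaves joint continuity implicit. Your write-up is therefore more complete than the paper's own proof.

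There is, however, one genuine soft spot in your step (ii). You propose to dominate the $i$-th summand by
\[
\frac{1}{2^iC_i^2}\Bigl(\|df\|_{L^4(K_i)}^2 + \sup_n\|df_n\|_{L^4(K_i)}^2\Bigr)\|\phi\|_{L^4(X)}^2
\]
and assert that this is summable ``thanks to the boundedness of $\{f_n\}$ in $\mathcal L_2$''. But boundedness only gives $\sum_i 2^{-i}C_i^{-2}\|df_n\|_{L^4(K_i)}^2\leq M$ for each fixed $n$; it does \emph{not} give summability of $\sum_i 2^{-i}C_i^{-2}\sup_n\|df_n\|_{L^4(K_i)}^2$, since for different $i$ the supremum may be attained at different $n$. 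The fix is easy: replace the termwise $\sup_n$ bound by the splitting $\|df_n\|_{L^4(K_i)}^2\leq 2\|df_n-df\|_{L^4(K_i)}^2+2\|df\|_{L^4(K_i)}^2$. The tail $\sum_{i>I}$ of the second piece is small for $I$ large (tail of a convergent series), while the tail of the first piece is bounded by $2\|f_n-f\|_{\mathcal L_2}^2\to 0$. Combined with your termwise convergence for $i\leq I$, this gives the $\ell^1$-convergence cleanly without needing a single dominating sequence.
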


\begin{proof}
Consider the equality
$$\nabla(\exp(i f) \cdot \phi) = i df \otimes \exp(i f) \cdot  \phi + \exp(i f) \cdot \nabla (\phi).$$
Then we have the estimates
\begin{align*}
\frac{1}{C_i^2}||df \otimes  \exp(i f) \phi||^2_{L^2(K_i)} 
& \leq 
\frac{1}{C_i^2}||df ||^2_{L^4(K_i)}|| \exp(i f) \phi||^2_{L^4(K_i)}  \\
& \leq 
\frac{1}{C_i^2}||df ||^2_{L^4(K_i)}|| \phi||^2_{L^4(X)} .
\end{align*}
This implies that $\exp(i f) \phi \in \mathcal D_1(X)$.
\end{proof}

\subsection{AHS index estimate}
Consider   the AHS  bounded complex
\begin{gather*}
0 \;\longrightarrow \; {\mathcal L}_{2}(X)  
\;\stackrel{d}{\longrightarrow} \; 
{\mathcal L}_1(X)  
  \stackrel{d^+}{\longrightarrow} \; 
L^2(X;  \Lambda^+ ) \;
\longrightarrow \; 0.
\end{gather*}
We will see below  that size of  the cohomology groups of this complex is somehow 
controlled by $L^2$ harmonic $2$-forms. Note $H^0=\R$ (constant functions).

\begin{corollary}\label{red-cokernel}
Suppose a non  zero  $L^2 $ harmonic self dual $2$ form
 $u \in \mathcal H^+(X; \R)$ exists, which is $L^2$ exact at infinity.
Then
$$d^+ : {\mathcal L}_1(X) \to L^2(X; \Lambda^+)$$
has non trivial reduced co-kernel.

In particular the inequality holds:
$$\text{red-codim } d^+(L^2_1(X; \Lambda^1)) \ \geq  \
 \text{red-codim } d^+(\mathcal L_1(X))
>0.$$
\end{corollary}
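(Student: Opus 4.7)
The plan is to show that $u$ represents a non-trivial class in the reduced cokernel of $d^+\colon\mathcal L_1(X)\to L^2(X;\Lambda^+)$ by an argument that ultimately reduces to Corollary \ref{stokes}, and then to deduce the displayed inequality from a continuous embedding $L^2_1\hookrightarrow\mathcal L_1$. The observation that makes everything work is that $\mathcal L_1(X)$ is, by construction, the completion of $\Omega^1_c(X)$, so approximating sequences are automatically compactly supported, which keeps Stokes' theorem available and, with it, a free uniform bound on $\|da_n\|_{L^2}$.

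Suppose, for contradiction, that $u\in\overline{d^+(\mathcal L_1(X))}$ in $L^2(X;\Lambda^+)$. Continuity of $d^+\colon\mathcal L_1(X)\to L^2(X;\Lambda^+)$ and density of $\Omega^1_c(X)$ in $\mathcal L_1(X)$ produce, by a diagonal argument, a sequence $a_n\in\Omega^1_c(X)$ with $d^+a_n\to u$ in $L^2(X;\Lambda^+)$. For each compactly supported smooth $a_n$, Stokes' theorem applied to the exact four-form $d(a_n\wedge da_n)$ gives
\[
0 \;=\; \int_X da_n\wedge da_n \;=\; \|d^+a_n\|_{L^2(X)}^2 - \|d^-a_n\|_{L^2(X)}^2,
\]
so $\|da_n\|_{L^2(X)}^2 = 2\|d^+a_n\|_{L^2(X)}^2$, which converges to $2\|u\|_{L^2(X)}^2$ and is, in particular, uniformly bounded. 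After passing to a subsequence and reindexing the exhaustion $\{K_i\}$ so that $\supp(a_n)\subset K_n$, the global convergence $d^+a_n\to u$ descends to $L^2$-convergence on every compact subset. Both hypotheses of Corollary \ref{stokes} are then satisfied (using the Remark after Lemma \ref{$L^2$-bound} which allows $a_n\in\Omega^1_c(X)$), and that corollary yields a contradiction. Hence $u\notin\overline{d^+(\mathcal L_1(X))}$, so the reduced cokernel is non-trivial.

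For the codimension inequality, enlarge the weights $C_i$ if necessary so that they absorb the Sobolev-embedding constants for $L^2_1(K_i)\hookrightarrow L^4(K_i)$ on each compact piece (permissible since the $C_i$ are only required to be non-decreasing). For $a\in\Omega^1_c(X;\Lambda^1)$ one then has $\|da\|_{L^2(X)}\leq\|\nabla a\|_{L^2(X)}$ and $\|a\|_{L^4(K_i)}\leq C_i\|a\|_{L^2_1(X)}$, giving $\|a\|_{\mathcal L_1}^2\leq C\|a\|_{L^2_1(X;\Lambda^1)}^2$. Passing to completions produces a continuous embedding $L^2_1(X;\Lambda^1)\hookrightarrow\mathcal L_1(X)$ that commutes with $d^+$, so $d^+(L^2_1(X;\Lambda^1))\subseteq d^+(\mathcal L_1(X))$; taking $L^2$-closures yields $\overline{d^+(L^2_1)}\subseteq\overline{d^+(\mathcal L_1)}$, and on quotients this reverses to the asserted inequality $\text{red-codim}\,d^+(L^2_1(X;\Lambda^1))\geq\text{red-codim}\,d^+(\mathcal L_1(X))>0$. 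The main obstacle is the Stokes identity in the second paragraph: it is precisely what converts the single hypothesis ``$d^+a_n\to u$ in $L^2$'' into the two hypotheses required by Corollary \ref{stokes}, and its availability depends crucially on $\mathcal L_1$ being a completion of compactly supported smooth one-forms.
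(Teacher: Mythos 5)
Your proof of the main assertion is correct and is essentially the paper's argument: the paper's entire proof of Corollary \ref{red-cokernel} is a one-line appeal to Corollary \ref{stokes}, and what you have added is precisely the reduction the paper leaves implicit — density of $\Omega^1_c(X)$ in $\mathcal L_1(X)$, continuity of $d^+$ for the $\mathcal L_1$-norm, and, crucially, the Stokes identity $\int_X da_n\wedge da_n=0$ for compactly supported $a_n$, which upgrades the single hypothesis ``$d^+a_n\to u$ in $L^2$'' to the uniform bound $\|da_n\|^2_{L^2(X)}=2\|d^+a_n\|^2_{L^2(X)}\le C$ demanded by Corollary \ref{stokes} (invoked in its compactly supported form via the Remark after Lemma \ref{$L^2$-bound}). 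That is exactly the right missing detail, and your handling of it is sound.

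One caveat concerns the ``in particular'' clause. Enlarging the weights $C_i$ changes the space: with larger constants the $\mathcal L_1$-norm becomes weaker, so your continuous embedding lands in the \emph{modified} completion, call it $\mathcal L_1'(X)$, and what you obtain is $\text{red-codim}\, d^+(L^2_1(X;\Lambda^1)) \ \geq \ \text{red-codim}\, d^+(\mathcal L_1'(X))$. Since $\mathcal L_1(X)$ maps continuously into $\mathcal L_1'(X)$, one only has $\text{red-codim}\, d^+(\mathcal L_1'(X)) \ \leq \ \text{red-codim}\, d^+(\mathcal L_1(X))$, so the chain does not close to the stated inequality for the originally fixed constants. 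The repair is to drop the embedding entirely: $\Omega^1_c(X)\subset\mathcal L_1(X)$ for any admissible constants (using $C_i\geq 1$), $\Omega^1_c(X)$ is dense in $L^2_1(X;\Lambda^1)$ because $g$ is complete, and $d^+:L^2_1(X;\Lambda^1)\to L^2(X;\Lambda^+)$ is bounded; hence $d^+\bigl(L^2_1(X;\Lambda^1)\bigr)\subset\overline{d^+(\Omega^1_c(X))}\subset\overline{d^+(\mathcal L_1(X))}$, which yields the displayed inequality directly, the strict positivity being exactly what your first part established.
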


\begin{proof}
It follows from proposition \ref{stokes} that 
$u$ does not lie in the closure of the  image of $d^+$.
\end{proof}

 Let us consider the first cohomology group.
 Recall that we have assumed that 
 $X$ is simply connected.
 \begin{lemma}\label{exact}
 For any $a \in \mathcal L_1(X)$ with $da=0$, 
 there is some $f \in \mathcal L_2(X)$ such that
 the equality holds:
 $$df=a.$$
 \end{lemma}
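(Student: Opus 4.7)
The natural strategy is to define $f$ as a weak primitive of $a$ via the Poincar\'e lemma and then verify that each of the two weighted sums in $\|f\|_{\mathcal L_2}^2$ is finite.

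Since the $\mathcal L_1$-norm controls $da\in L^2(X;\Lambda^2)$, the hypothesis $da=0$ means that $a$ is a weakly closed $L^4_{\mathrm{loc}}$ one-form on the simply connected base $X$. Standard Hodge theory on manifolds with boundary (invoked in Lemma~\ref{gauge} via \cite{schwarz, wehrheim}), applied on each compact $K_i$, produces $f_i\in W^{1,4}(K_i)$, unique up to a constant, with $df_i=a|_{K_i}$; consistency across the exhaustion together with the normalization $\int_{K_0}f\,\vol_g=0$ (i.e.\ $c_f^{(0)}=0$, where $c_f^{(i)}:=\vol(K_i)^{-1}\int_{K_i}f$) glues these into a global $f\in W^{1,4}_{\mathrm{loc}}(X)$ with $df=a$.

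The first piece of the $\mathcal L_2$-norm is then automatic from $df=a$:
$$\sum_i 2^{-i}C_i^{-2}\|df\|_{L^4(K_i)}^2=\sum_i 2^{-i}C_i^{-2}\|a\|_{L^4(K_i)}^2\le\|a\|_{\mathcal L_1}^2.$$
For the $L^2$-piece I split $f=(f-c_f^{(i)})+c_f^{(i)}$ on each $K_i$. The Poincar\'e inequality built into the choice of $C_i$, combined with H\"older and $\vol(K_i)\le C_i^2$, gives
$$\|f-c_f^{(i)}\|_{L^2(K_i)}\le C_i\|df\|_{L^2(K_i)}\le C_i\vol(K_i)^{1/4}\|a\|_{L^4(K_i)}\le C_i^{3/2}\|a\|_{L^4(K_i)},$$
so the fluctuation contribution to $\sum_i 2^{-i}C_i^{-6}\|f\|_{L^2(K_i)}^2$ is majorized by $\sum_i 2^{-i}C_i^{-3}\|a\|_{L^4(K_i)}^2\le\|a\|_{\mathcal L_1}^2$. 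For the mean itself, the identity $(c_f^{(j)}-c_f^{(j-1)})\vol(K_{j-1})=\int_{K_{j-1}}(c_f^{(j)}-f)$ together with the estimate above and $\vol(K_{j-1})\ge\vol(K_0)$ yields $|c_f^{(j)}-c_f^{(j-1)}|\lesssim C_j^{3/2}\|a\|_{L^4(K_j)}$. Telescoping from $c_f^{(0)}=0$, a weighted Cauchy--Schwartz inequality (with weights $2^{-k}$), and a swap of the order of summation using $\sum_{i\ge k}2^{-i}C_i^{-4}\le 2\cdot 2^{-k}C_k^{-4}$ then reduce the contribution of the means to a multiple of $\|a\|_{\mathcal L_1}^2$, after the $C_i$ have been taken to grow sufficiently quickly---which is permitted, since the conditions imposed on $C_i$ in Section~\ref{Functional spaces} are only lower bounds.

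\textbf{Main obstacle.} Since $f$ carries only a single additive degree of freedom, the single normalization $c_f^{(0)}=0$ must simultaneously control all of the $L^2$-means $c_f^{(i)}$ that enter the $\mathcal L_2$-norm; the polynomial-in-$C_i$ growth of the Poincar\'e constants has to be absorbed by the aggressive decay of the weights $2^{-i}C_i^{-6}$. The weights in the definition of $\mathcal L_2$ have evidently been tuned with exactly this inequality in mind, so once that calibration is accepted the verification reduces to careful bookkeeping on an iterated Poincar\'e--Cauchy--Schwartz estimate rather than a new idea.
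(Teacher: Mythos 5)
Your framing of the problem is right: the $df$-part and the fluctuation part are handled correctly, and controlling the means $c_f^{(i)}$ from the single normalization is indeed the crux. The gap is precisely in that last step. From your ingredients you only get $|c_f^{(k)}-c_f^{(k-1)}|\le c_0\,C_k^{3/2}\|a\|_{L^4(K_k)}$, and the weighted Cauchy--Schwartz plus the swap $\sum_{i\ge k}2^{-i}C_i^{-4}\le 2\cdot 2^{-k}C_k^{-4}$ bound the mean contribution only by a multiple of $\sum_k C_k^{-1}\|a\|^2_{L^4(K_k)}$, whereas $a\in\mathcal L_1$ controls only $\sum_k 2^{-k}C_k^{-2}\|a\|^2_{L^4(K_k)}$; the weights differ by the unbounded factor $2^kC_k$, so no inequality of the form ``mean part $\le c\,\|a\|_{\mathcal L_1}^2$'' follows. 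This is not a bookkeeping defect that finer summation can cure: already a single saturated increment defeats the weights, since with $\vol(K_k)\sim C_k^2$ and $|c_f^{(k)}|\sim C_k^{3/2}\|a\|_{L^4(K_k)}$ the $i=k$ term $2^{-k}C_k^{-6}\vol(K_k)|c_f^{(k)}|^2$ is already $C_k$ times the corresponding term of $\|a\|_{\mathcal L_1}^2$. Moreover your proposed remedy---letting the $C_i$ grow faster---points the wrong way: because you estimate the Poincar\'e constant by $C_i$ and $\vol(K_i)^{1/4}$ by $C_i^{1/2}$, enlarging $C_i$ weakens the drift bound at the same rate as it improves the weights, and the mismatch $2^kC_k$ only grows. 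What would close your telescoping is to decouple the geometric data from the weights: keep the true Poincar\'e constant $P_k$ of $K_k$ and $V_k=\vol(K_k)$ in the drift bound $|c_f^{(k)}-c_f^{(k-1)}|\lesssim P_kV_k^{1/4}\|a\|_{L^4(K_k)}$ and impose additionally $C_k\ge 2^{k/2}P_kV_k^{1/4}$ (consistent with the paper, whose conditions on $C_k$ are lower bounds), at the price that the lemma is then proved only for such an enlarged choice of constants, i.e.\ for correspondingly modified spaces $\mathcal L_1,\mathcal L_2$.

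For comparison, the paper's own proof follows the same overall scheme (global primitive from $H^1_{dR}(X)=0$, Poincar\'e inequality on each $K_i$, $\vol(K_i)\le C_i^2$ and Cauchy--Schwartz to pass from $L^2$ to $L^4$), but it avoids your single normalization: it subtracts from the primitive its mean over $K_i$ separately for each $i$, obtaining $h_i$ with $\|h_i\|_{L^2(K_i)}\le C_i\|a\|_{L^2(K_i)}$, and then extracts a weak limit $f$ by a diagonal argument, asserting $\|f\|_{L^2(K_{i_0})}\le C_{i_0}\|a\|_{L^2(K_{i_0})}\le C_{i_0}^2\|a\|_{L^4(K_{i_0})}$ for every $i_0$, from which the weighted sum converges. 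Note that the drift of the subtracted constants---exactly the difficulty you isolated---is also the delicate point in that argument (the passage from the bounds on $h_i$ over $K_i$ to a uniform bound for the limit on each fixed $K_{i_0}$ is stated rather than proved), so you identified the right obstacle; but as written your resolution of it does not go through.
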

 
 \begin{proof}
Since  $H^1_{dR}(X;\R)=0$ holds, 
there is some $g \in L^2_1(X)_{loc}$ 
 with $a =dg$.
 Let us consider 
 restrictions  $g_i := g|K_i \in L^2_1(K_i)$.
 It follows from the Poincar\'e inequality that 
 there are constants $c_{g_i} \in \R$ such that
 $h_i = g_i - c_{g_i}\in L^2_1(K_i)
 $ satisfy
 the estimates
 $$C_i ||a ||_{L^2(K_i)}  = C_i ||d h_i ||_{L^2(K_i)} 
 \geq ||h_i||_{L^2(K_i)}.$$
Hence   $\{ h_i |K_{i_0} \}_{i \geq i_0}$ consist of a uniformly bounded family
for each $i_0$.
 
 Then  by the diagonal method, 
 $h_i$ weakly converge to some $f \in L^2_1(X)_{loc}$ with
 $df =a$ so that  the estimate 
 \[
 ||f||_{L^2(K_{i_0})} \leq  \limsup_i \ ||h_i||_{L^2(K_{i_0})} 
  \leq C_{i_0} ||a ||_{L^2(K_{i_0})} 
  \]
  holds for each $i_0$.
 Since we can assume the estimate $\vol(K_i) \leq C_i^2$ (see below Remark \ref{q-cy-case}),
  it follows by the Cauchy-Schwartz estimate that we have the bounds
  \[
   ||f||_{L^2(K_{i_0})} \leq C_{i_0}^2 ||a||_{L^4(K_i)}.
   \]

  Then we have  the estimate on the sums:
$$  \sum_{i=0}^{\infty} \  
 \frac{1}{2^{i}C_{i}^6}
 ||f||^2_{L^2(K_{i})}
\  \leq \  \sum_{i=0}^{\infty} \  
 \frac{1}{2^{i}C_{i}^2}
 ||a||^2_{L^4(K_{i})}.$$
This implies $f \in \mathcal L_2(X)$.
 \end{proof}

\begin{corollary}
There is an injection
$$m : H^1(\mathcal L_*(X)) \hookrightarrow \mathcal H^-(X)$$
where the right hand side is the space of anti-self-dual $L^2$ harmonic two forms,
and the left hand side is the first cohomology group of the AHS complex
of ${\mathcal L}_*(X)$.
\end{corollary}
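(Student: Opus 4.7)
The natural candidate for $m$ is the ``anti-self-dual projection of the differential'' map
\[
m([a]) \; := \; d^- a \; = \; da - d^+ a .
\]
This is well-defined as an element of $L^2(X;\Lambda^-)$ because the very definition of the $\mathcal L_1$-norm contains the summand $\|da\|^2_{L^2(X)}$, so $da \in L^2(X;\Lambda^2)$ for every $a \in \mathcal L_1(X)$. When $a$ is an AHS cocycle, i.e.\ $d^+a = 0$, we have $da = d^- a$, which is automatically anti-self-dual and $L^2$.

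The first step of my plan is to verify that $d^- a$ is $L^2$-harmonic whenever $d^+a=0$. Closedness is immediate: $d(d^-a) = d(da) = 0$ distributionally. For co-closedness I will use the four-dimensional identity $d^* = -\ast d \ast$ on $2$-forms together with anti-self-duality $\ast(d^-a) = -d^-a$, which yields $d^*(d^-a) = -\ast d\ast(d^-a) = \ast d(d^-a) = 0$. Hence $d^- a \in \mathcal H^-(X)$. The second step is well-definedness on cohomology classes: if $a = df$ with $f \in \mathcal L_2(X)$, then $da = 0$, so $d^- a = 0$. Since $d: \mathcal L_2(X) \to \mathcal L_1(X)$ is continuous by the remark before Lemma \ref{exact}, this passes to the quotient to give a linear map $m: H^1(\mathcal L_*(X)) \to \mathcal H^-(X)$.

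The third and crucial step is injectivity. Suppose $a \in \mathcal L_1(X)$ represents a class with $m([a])=0$. Then $d^- a = 0$; combined with the cocycle condition $d^+ a = 0$, this forces $da = 0$. Now Lemma \ref{exact}, which exploits the standing assumption that $X$ is simply connected together with the Poincar\'e-type control built into our choice of constants $\{C_i\}$, produces $f \in \mathcal L_2(X)$ with $df = a$. Thus $[a] = 0$ in $H^1(\mathcal L_*(X))$, giving the desired injectivity.

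The step that requires the most care is the first: one must check that the identity $d^*(d^-a) = 0$ holds distributionally, not just formally, when $a$ lies only in the completion $\mathcal L_1(X)$ rather than in $C^\infty_c$. Since $d^-a \in L^2(X;\Lambda^-)$, the identity $d^*(d^-a) = \ast d(d^-a) = 0$ follows from $d(da) = 0$ interpreted distributionally, which is valid because exterior differentiation is continuous from $L^2$ to distributions and $d^2=0$ on $C^\infty_c$ test forms. The remaining manipulations are algebraic consequences of self-duality conventions and do not require additional regularity of $a$.
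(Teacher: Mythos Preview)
Your proof is correct and follows essentially the same route as the paper: define $m([a]):=da$ (which equals $d^-a$ since $d^+a=0$), observe that $2d^+a=(d+\ast d)a=0$ forces $da$ to be anti-self-dual and hence $L^2$-harmonic, and invoke Lemma \ref{exact} for injectivity. Your additional remarks on well-definedness modulo coboundaries and on the distributional interpretation are careful elaborations of points the paper leaves implicit, but the argument is the same.
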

In particular $H^1(\mathcal L_*(X))=0$ holds when   $ \mathcal H^-(X)=0$.
\begin{proof}
Take an element $[a] \in H^1(\mathcal L_*(X))$ with $d^+(a)=0$.
Then $d*da=0$ holds since $2d^+(a)= (d + *d)(a) =0$ holds.
Hence $da$ is an anti-self-dual $L^2$ harmonic two form
$$m([a]) : = da \in \mathcal H^-(X).$$

If $da=0$ holds, then $a = df$ for some $f \in \mathcal L_2(X)$ by lemma \ref{exact},
which represents zero in $H^1(\mathcal L_*(X))$.
\end{proof}

\subsection{Compact perturbation}
Let $(\phi_0, a_0) \in \mathcal D_1^+(X) \times \mathcal L_1(X)$
be a solution to the SW equation
$$\mathcal SW(\phi_0, \nabla + i a_0)=0.$$

\begin{lemma}
The linear map
$$\phi_0 \otimes:  D_1^+(X) \to L^2(X; \End S^+)$$
given by
$$\phi \to \phi_0 \otimes \phi^*$$
is compact.
\end{lemma}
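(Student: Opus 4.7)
The plan is to realize $T:\mathcal D_1^+(X)\to L^2(X;\End S^+)$, $T(\phi):=\phi_0\otimes\phi^*$, as a norm-limit of operators which are individually compact by Rellich--Kondrachov on a single fixed compact piece. The one ingredient that does real work is the pointwise bound $|\phi_0\otimes\phi^*|\le |\phi_0|\,|\phi|$, which via Cauchy--Schwartz gives
\[
\|\phi_0\otimes\phi^*\|_{L^2(X)}\;\le\;\|\phi_0\|_{L^4(X)}\,\|\phi\|_{L^4(X)}\;\le\;\|\phi_0\|_{L^4(X)}\,\|\phi\|_{\mathcal D_1}.
\]
In particular, if I replace $\phi_0$ by any $\phi_0^\varepsilon\in L^4(X;S^+)$ and set $T^\varepsilon(\phi):=\phi_0^\varepsilon\otimes\phi^*$, then $\|T-T^\varepsilon\|_{\mathrm{op}}\le\|\phi_0-\phi_0^\varepsilon\|_{L^4(X)}$.

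Next I would exploit density of $C_c^\infty(X;S^+)$ in $L^4(X;S^+)$ (the Riemannian measure is $\sigma$-finite, so this is standard even without bounded geometry) to choose, for each $\varepsilon>0$, a section $\phi_0^\varepsilon\in C_c^\infty(X;S^+)$ supported in some $K_N=K_N(\varepsilon)$ with $\|\phi_0-\phi_0^\varepsilon\|_{L^4(X)}<\varepsilon$. Then $\|T-T^\varepsilon\|_{\mathrm{op}}<\varepsilon$, so it suffices to prove that every such $T^\varepsilon$ is compact; the conclusion will then follow because compact operators form a norm-closed subspace of bounded operators.

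To see $T^\varepsilon$ is compact, take a bounded sequence $\{\phi_n\}\subset\mathcal D_1^+(X)$. The definition of $\|\cdot\|_{\mathcal D_1}$ gives in particular a uniform bound on $\|\phi_n\|_{L^2_1(K_N)}$ for the fixed index $N$. Since $K_N$ is a compact smooth manifold with boundary, the Sobolev embedding $L^2_1(K_N)\hookrightarrow L^2(K_N)$ is compact (Rellich--Kondrachov, which requires no global geometric hypothesis because everything happens on the fixed piece $K_N$), so a subsequence $\phi_{n_k}$ converges to some $\phi_\ast$ in $L^2(K_N)$. Since $\phi_0^\varepsilon$ is supported in $K_N$ and bounded in $L^\infty$,
\[
\|T^\varepsilon\phi_{n_k}-\phi_0^\varepsilon\otimes\phi_\ast^*\|_{L^2(X)}\;\le\;\|\phi_0^\varepsilon\|_{L^\infty}\,\|\phi_{n_k}-\phi_\ast\|_{L^2(K_N)}\;\longrightarrow\;0,
\]
which gives compactness of $T^\varepsilon$ and hence of $T$.

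There is no real obstacle; the only point worth flagging is the naive temptation to apply Rellich directly on all of $X$, which would fail because we do not assume bounded geometry and because the $\mathcal D_1$ norm is genuinely weaker than any single Sobolev norm. The two-step truncation--plus--compactness argument sidesteps this entirely: compactness is used only on a single precompact $K_N$, while the tail of $\phi_0$ is absorbed into the error term through the $L^4(X)$ approximation. The same pattern will presumably be useful when one wants to verify Fredholmness of the linearized SW map in this non-standard functional setting.
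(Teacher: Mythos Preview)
Your proof is correct and is in fact cleaner than the paper's. The two arguments share the same opening observation --- the Cauchy--Schwartz bound $\|\phi_0\otimes\phi^*\|_{L^2}\le\|\phi_0\|_{L^4}\|\phi\|_{L^4}$, which localizes the problem via the $L^4$-tail of $\phi_0$ --- but then diverge. You approximate $\phi_0$ in $L^4$ by a compactly supported smooth section, reduce to a single compact piece $K_N$ where Rellich applies directly to $\phi$, and conclude by the norm-closedness of compact operators. The paper instead keeps $\phi_0$ as is, invokes the Seiberg--Witten equations to bootstrap $\phi_0\in (L^2_2)_{\loc}$, uses the Sobolev multiplication $(L^2_2)_{\loc}\times(L^2_1)_{\loc}\to(L^2_1)_{\loc}$ to get local compactness of the map itself, and then runs a diagonal extraction over the exhaustion $\{K_i\}$. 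Your route is strictly more elementary: it uses only $\phi_0\in L^4(X)$ and never touches the equation, and it replaces the diagonal argument by an abstract operator-norm limit. What the paper's approach buys is that the regularity $\phi_0\in(L^2_2)_{\loc}$ is established along the way, and this is reused in the subsequent lemma on the maps $b\mapsto b\cdot\phi_0$ and $\phi\mapsto a_0\cdot\phi$; for the present lemma in isolation, however, your argument is preferable.
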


\begin{proof}
{\bf Step 1:}
  For any $\epsilon >0$, there is $i_0$ with
$||\phi_0||_{L^4(K_{i_0}^c)} < \epsilon$, since $\phi_0 \in L^4(X;S^+)$.
Then
the estimates hold:
$$ ||\phi_0 \otimes \phi^*||_{L^2(K_{i_0}^c)} \leq  ||\phi_0||_{L^4(K_{i_0}^c)} 
||\phi||_{L^4(K_{i_0}^c)}  < \epsilon ||\phi||_{L^4(K_{i_0}^c)} 
\leq \epsilon ||\phi||_{L^4(X)} .$$

{\bf Step 2:}
We claim  $\phi_0 \otimes \phi^* 
 \in (L^2_1)_{loc}(X; \End S^+)$. 
Recall the bound
 $||a_0||_{L^2_2(K_{i_0})} \leq C_{i_0}$ 
 (see $(*)$  in Step $5$ of  the proof of Lemma \ref{K3}).
 It follows from the equality
  $D(\phi_0) = - a_0 \cdot \phi_0 \in (L^2_1)_{loc}$  with 
   the local Sobolev multiplication
 $$(L^2_2)_{loc} \times (L^2_1)_{loc}  \to (L^2_1)_{loc} $$
 that  $\phi_0 \in (L^2_2)_{loc}$ holds.
 Then the claim follows  by applying the Sobolev multiplication again.
Hence   the map 
$\phi_0 \otimes$ is locally compact.

 {\bf Step 3:}
Let us  take a bounded sequence $\{\psi_i\}_i$ with $||\psi_i||_{\mathcal D_1(X)} \leq c$.
For any $\epsilon >0$, there is $i_0$ such that for any $i \geq i_0$ and $j$, the estimates hold:
\[
||\phi_0 \otimes \psi_{j}^*||_{L^2(K_{i}^c)}  \leq 
||\phi_0 ||_{L^4(K_{i}^c)} || \psi_{j}^*||_{L^4(K_{i}^c)} 
< \epsilon.
\]

By the diagonal method, after choosing subsequence,  one finds an element 
 $w \in L^2(X; \End S^+)$ 
such that
$(1)$ $|| w - \phi_0 \otimes \psi_{i}^*||_{L^2(K_{i})} < i^{-1}$ and
$(2)$ $||w- \phi_0 \otimes \psi_{i}^*
||_{L^2(K_{i}^c)} < i^{-1}$ hold.
hence we have the estimate
$$|| w - \phi_0 \otimes \psi_{i}^*||_{L^2(X)} < 2 i^{-1}.$$
This implies that 
the map 
$\phi_0 \otimes$ is compact.
\end{proof}

Similarly,
$\phi \to \phi \otimes \phi_0^*$
is also compact.

\begin{lemma}
Let $(\phi_0, a_0)$ be as above.
Then the following maps
\begin{align*}
& \ker d^* \cap \mathcal L_1(X) \to \mathcal D_0^-(X) , \quad
b \to b \cdot \phi_0, \\
& \mathcal D_1^+(X) \to \mathcal D_0^-(X) , \quad
\phi \to a_0 \cdot \phi
\end{align*}
are both compact.
\end{lemma}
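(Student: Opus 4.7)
The strategy mimics the proof of the preceding compactness lemma: extract a locally convergent subsequence by upgrading the weak $\mathcal{L}_1$ bound via elliptic regularity, establish local $L^2$ convergence of the Clifford product by approximating the fixed factor in $L^4_{\text{loc}}$ by bounded smooth sections, and then control the tail of the $\mathcal{D}_0$ series using either the global $L^4$ integrability of $\phi_0$ (first map) or the $\mathcal{L}_1$ bound on $a_0$ (second map).

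For the first map, take a bounded sequence $\{b_j\} \subset \ker d^* \cap \mathcal{L}_1(X)$ with $\|b_j\|_{\mathcal{L}_1} \leq M$. Since $db_j$ is uniformly bounded in $L^2(X)$, $d^*b_j = 0$, and $b_j$ is locally in $L^4$, the interior elliptic estimate for $d+d^*$ on $K_{i-1} \Subset K_i$ yields uniform $L^2_1(K_{i-1})$ bounds. A diagonal extraction produces a subsequence converging in $L^2_{\text{loc}}$ to some $b$, which lies in $\ker d^* \cap \mathcal{L}_1(X)$ by weak lower semicontinuity of the relevant norms. For each fixed $i$, approximating $\phi_0 \in L^4(K_i)$ by a bounded smooth $\phi_0^{(m)}$ with $\|\phi_0 - \phi_0^{(m)}\|_{L^4(K_i)} < 1/m$ and applying Hölder gives
\[
\|(b_j - b)\phi_0\|_{L^2(K_i)} \leq \|\phi_0^{(m)}\|_{L^\infty(K_i)} \|b_j - b\|_{L^2(K_i)} + \|b_j - b\|_{L^4(K_i)} \|\phi_0 - \phi_0^{(m)}\|_{L^4(K_i)},
\]
which vanishes by first sending $j \to \infty$ and then $m \to \infty$.

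To handle the tail of the $\mathcal{D}_0$ norm I fix $\epsilon > 0$ and choose $i_0$ so large that $\|\phi_0\|_{L^4(K_{i_0}^c)} < \epsilon$, which is possible since $\phi_0 \in L^4(X)$. For $i \geq i_0$, split $K_i = K_{i_0} \sqcup (K_i \setminus K_{i_0})$ and estimate
\[
\|(b_j - b)\phi_0\|_{L^2(K_i \setminus K_{i_0})} \leq \|b_j - b\|_{L^4(K_i)} \, \|\phi_0\|_{L^4(K_{i_0}^c)}.
\]
Summing against the weights $\frac{1}{2^i C_i^2}$ and using $\|b_j - b\|_{\mathcal{L}_1} \leq 2M$, the contribution from the annular pieces is at most $\epsilon^2(2M)^2$; the complementary piece $\|(b_j - b)\phi_0\|_{L^2(K_{i_0})}^2 \sum_{i \geq i_0} \frac{1}{2^i C_i^2}$ vanishes as $j \to \infty$ by the previous step, and the finite sum over $i < i_0$ likewise vanishes.

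For the second map the plan is identical, but the tail is even simpler via dominated convergence: the pointwise estimate
\[
\frac{1}{2^i C_i^2} \|a_0(\phi_j - \phi)\|_{L^2(K_i)}^2 \leq \frac{\|a_0\|_{L^4(K_i)}^2}{2^i C_i^2} \|\phi_j - \phi\|_{L^4(X)}^2 \leq \frac{\|a_0\|_{L^4(K_i)}^2}{2^i C_i^2}(2M)^2
\]
is summable precisely because $a_0 \in \mathcal{L}_1(X)$, and the termwise convergence to zero follows from the local $L^2$ argument above. The main obstacle throughout is that the critical Sobolev embedding $L^2_1 \hookrightarrow L^4$ is not compact in dimension four, so one cannot pass directly from local weak $L^2_1$ convergence to local $L^4$ strong convergence of the $b_j$ or $\phi_j$; the $L^\infty \cdot L^2 + L^4 \cdot L^4$ decomposition above circumvents this by exploiting only $L^4$ approximability and the already-available local $L^2$ convergence.
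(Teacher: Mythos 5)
Your argument is correct, and it reaches the conclusion by a route that differs from the paper's at the key local step. The paper gets local convergence of the products themselves: it recalls the bound $\|a_0\|_{L^2_2(K_{i_0})}\le C_{i_0}$ from the main construction, bootstraps $\phi_0\in (L^2_2)_{\mathrm{loc}}$ from $D\phi_0=-a_0\cdot\phi_0$, and then uses the Sobolev multiplication $(L^2_2)_{\mathrm{loc}}\times (L^2_1)_{\mathrm{loc}}\to (L^2_1)_{\mathrm{loc}}$ so that $b_l\cdot\phi_0$ and $a_0\cdot\phi_l$ are uniformly bounded in $(L^2_1)_{\mathrm{loc}}$ and Rellich applies to the products. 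You instead apply Rellich (after the interior elliptic estimate for $d+d^*$ in the case of $b_j$, and directly from the $\mathcal D_1$ norm in the case of $\phi_j$) to the \emph{variable} factor, and convert local $L^2$ convergence of that factor into local $L^2$ convergence of the product by approximating the fixed factor in $L^4_{\mathrm{loc}}$ by bounded sections, splitting $L^\infty\cdot L^2+L^4\cdot L^4$. This buys independence from the extra $(L^2_2)_{\mathrm{loc}}$ regularity of $(\phi_0,a_0)$ imported from Step $5$ of the main proof, so your argument works for any solution merely in $\mathcal D_1^+(X)\times\mathcal L_1(X)$; the paper's version yields slightly more (local $L^2_1$ control of the products) at the cost of that input. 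Your tail estimates coincide in substance with the paper's: smallness of $\|\phi_0\|_{L^4(K_{i_0}^c)}$ for the first map, and summability of the weighted $\|a_0\|^2_{L^4(K_i)}$ series (your dominated-convergence phrasing) for the second. One small point: membership of the weak limit $b$ in the completion space $\mathcal L_1(X)$ is not really needed and is slightly delicate to assert; all your estimates use only the finite-norm bound $\|b_j-b\|_{\mathcal L_1}\le 2M$, which follows from Fatou/weak lower semicontinuity, so the proof stands without that claim.
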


\begin{proof}
{\bf Step 1:}
Let us consider the latter.
We have the estimate
\begin{align*}
\frac{1}{C_i}||a_0 \cdot \phi||_{L^2(K_i)} & \leq
\frac{1}{C_i}||a_0 ||_{L^4(K_i)} ||\phi||_{L^4(K_i)}.
 \end{align*}
Then for
 any $\epsilon >0$, there is $i_0$ so that
the estimates hold:
\begin{align*}
\sum_{i \geq i_0+1} 
 \ \frac{1}{2^i C_i^2}||a_0 \cdot  & \phi||^2_{L^2(K_i)}
  \leq
\sum_{i \geq i_0+1} 
 \ \frac{1}{2^i C_i^2}||a_0 ||_{L^4(K_i)}^2  ||\phi||^2_{L^4(K_i)} \\
 &  \leq \sum_{i \geq i_0+1}
 \ \frac{1}{2^i C_i^2}||a_0 ||_{L^4(K_i)}^2  ||\phi||^2_{L^4(X)} 
\  < \  \epsilon  ||\phi||^2_{L^4(X)}.
\end{align*} 

 Take a bounded set $\{\phi_l\}_l$ in $\mathcal D_1^+(X)$.
Since $a_0 \in (L^2_2)_{loc}$ and the 
Sobolev multiplication
$ (L^2_2)_{loc}  \times (L^2_1)_{loc} \to (L^2_1)_{loc}$ holds, 
$a_0 \cdot \phi_l$ admits a subsequence which converge to $w$ in $L^2_{loc}$ with
$$\sum_{i \geq 0} \ \frac{1}{2^i C_i^2}||w||^2_{L^2(K_i)} < \infty.$$
In particular 
we obtain convergence
$$\sum_{i \leq i_0} \ \frac{1}{2^i C_i^2}||w- a_0 \cdot \phi_l||^2_{L^2(K_{i})} \to 0.$$

Combining these things with
the diagonal method, one can choose another subsequence 
so that $a_0 \cdot \phi_l$
converge to $w$ in $\mathcal D_0^-(X)$.

{\bf Step 2:}
Next consider the former.
Notice that an element $b \in \ker d^* \cap \mathcal L_1(X)$ is in $(L^2_1)_{loc}$
by the elliptic estimate.

It follows from the equality
$D(\phi_0) = - a_0 \cdot \phi_0$ with the Sobolev multiplication above that 
$\phi_0 \in (L^2_2)_{loc}$ holds.

For any $\epsilon >0$, there is $i_0$ so that
the estimate 
$||\phi_0||_{L^4(K_{i_0}^c)} < \epsilon$
holds. Hence, we have the estimates
 \begin{align*}
 \sum_{i \geq i_0+1}
 \ \frac{1}{2^i C_i^2}||b \cdot \phi_0 ||_{L^2(K_{i_0}^c \cap K_i)}^2 
&  \leq \sum_{i \geq i_0+1}
 \ \frac{1}{2^i C_i^2}||b||_{L^4(K_{i_0}^c \cap K_i)}^2 
  ||\phi_0||^2_{L^4(K_{i_0}^c \cap K_i)} \\
& < \  \epsilon \cdot  \sum_{i \geq i_0+1}
 \ \frac{1}{2^i C_i^2}||b||_{L^4(K_{i_0}^c \cap K_i)}^2 .
\end{align*}
 
 Take a bounded set $\{b_l\}_l$ in $\mathcal L_1(X)$.
Then by the Sobolev multiplication above, 
$b_l \cdot \phi_0 \in (L^2_1)_{loc}$ holds, and a subsequence
converge in $L^2_{loc}$ to $w$ with
$$ \sum_{i \geq 0}
 \ \frac{1}{2^i C_i^2}||w ||_{L^2( K_i)}^2  < \infty.$$
 
 Then we have the estimates
 \begin{align*}
&  \sum_{i \geq 0}
 \ \frac{1}{2^i C_i^2}||w- b_l  \cdot \phi_0 ||_{L^2( K_i)}^2 
=  \\
& \sum_{i \leq i_0}
 \ \frac{1}{2^i C_i^2}||w - b_l \cdot \phi_0 ||_{L^2( K_i)}^2  +
\sum_{i \geq i_0+1}
 \ \frac{1}{2^i C_i^2}||w - b_l \cdot \phi_0 ||_{L^2( K_i)}^2  \\
 &  \leq  \sum_{i \geq 0}
 \ \frac{1}{2^i C_i^2}||w -b_l \cdot \phi_0 ||_{L^2( K_{i_0})}^2  +
\sum_{i \geq i_0+1}
 \ \frac{1}{2^i C_i^2}||w- b_l \cdot \phi_0 ||_{L^2(K_{i_0}^c \cap K_i)}^2  \\ 
 &  \leq  2  ||b_l \cdot \phi_0 ||_{L^2(K_{i_0})}^2
  + \\
  & \qquad
\sum_{i \geq i_0+1}
 \ \frac{2}{2^i C_i^2}||w ||_{L^2(K_{i_0}^c \cap K_i)}^2   +
 \sum_{i \geq i_0+1}
 \ \frac{2}{2^i C_i^2}|| b_l \cdot \phi_0||_{L^2(K_{i_0}^c \cap K_i)}^2 
  \end{align*}
  where the right hand side can be arbitrarily small.
 Note that we have chosen these constants $C_i \geq 1$ for any $i \geq 0$.
This verifies that the former map is also compact.
 \end{proof}

\section{{\bf Appendix:} Hodge theory on manifolds with boundary}

\small

Hodge theory has been extensively developed  on  manifolds with boundary.
We refer \cite{schwarz} for its basic theory. We also review some of basic  facts from it.

Let $X_0$ be a compact Riemannian manifold with boundary
so that a neighbourhood of the boundary 
$N(\partial X_0)$ is diffeomorphic to 
$\partial X_0 \times [0, \epsilon)$.
At a boundary point $x \in \partial X_0$,
the unit-normal direction ${\bf n}_x$ is uniquely determined as the outward vector
which is orthogonal to all the tangent vectors on $\partial X_0$ at $x$.

Let $X$ be 
 a vector field  defined on a neighbourhood of boundary.
Then  denote the vector field  on  the boundary $\partial X_0$ by $X^t$
 as the orthogonal complement to the normal vector  field ${\bf n}$. 

For a $k$-form $\omega \in \Omega^k(X_0)$,  let us denote 
the induced $k$-forms on the boundary by
\begin{align*}
&
{\bf t} \omega (X_1, \dots,X_k) : = \omega(X_1^t, \dots, X_k^t), \\
&
{\bf n} \omega := \omega | \partial X_0 - {\bf t} \omega.
\end{align*}
There are basic relations
$${\bf t}* = * {\bf n}, \quad *{\bf t} =  {\bf n}* , 
\quad {\bf t} \circ d = d \circ {\bf t}, \quad {\bf n} \circ d^* = d^* \circ {\bf n}.$$

Let $L^2_l(X_0; \Lambda^k)$ be the Sobolev $l$-space. Then we
 denote 
$H^1 \Omega^k(X_0) : =   L^2_1(X_0; \Lambda^k)$ and
$$H^1 \Omega^k_D(X_0) : =
 \{ \ \omega \in L^2_1(X_0; \Lambda^k); \ {\bf t} \omega =0 \ \}.$$

Let $d^* : = (-1)^{mk +m+1} *d*$ be the formal-adjoint operator, and put
$$\mathcal H^k(X_0) : =  \ \{ \   \lambda \in H^1 \Omega^k(X_0) ; 
\ d \lambda = d^* \lambda =0 \ \}$$
where $m = \dim X_0$.
We also denote
 $$\mathcal H^k_D(X_0) 
: = \mathcal H^k(X_0) \cap H^1 \Omega^k_D(X_0).$$

\begin{definition}
The Dirichlet integral
$$\mathcal D : H^1 \Omega^k(X_0) \times H^1 \Omega^k(X_0) \to \R$$
 is defined by
 $$\mathcal D(\omega, \eta) = < d \omega , d \eta>_{L^2} + 
 <d^* \omega , d^* \eta >_{L^2}.$$
 \end{definition}

Let
$\mathcal H^k_D(X_0)^{\perp}  \subset L^2(X_0; \Lambda^k)$
be the orthogonal complement, and put
$$\mathcal H^k_D(X_0)^{\curlywedge}: =
 \mathcal H^k_D(X_0)^{\perp} \cap H^1\Omega^k_D(X_0).$$
$ \mathcal H^k_D(X_0)^{\curlywedge} \subset H^1\Omega^k_D(X_0)$
 is a closed linear subspace.

Recall the Green's formula
$$<d \omega , \eta>_{L^2} = < \omega , d^* \eta>_{L^2} 
+ \int_{\partial X_0} \  {\bf t} \omega \wedge * {\bf n} \eta$$
where 
 $\omega  \in L^2_1(X_0; \Lambda^{k-1})$
 and  $\eta \in L^2_1(X_0; \Lambda^{k})$.
Note that we can also define $ {\bf t} \omega \in L^2(\partial X_0; \Lambda^{k-1})$ by this formula.

The following two results are  the key   to  our analysis.
See  \cite{schwarz} for their proofs (page $69$, Proposition $2.2.3$ and page $71$, Theorem $2.2.5$).

\begin{lemma}\label{equivalence}
The Dirichlet integral is equivalent to $H^1$ norm on $\mathcal H^k_D(X_0)^{\curlywedge}$ so that therte is a constant $c,c'>0$ such that the uniform estimates hold:
$$c' ||\omega||_{H^1}^2 \ \leq \ \mathcal D(\omega, \omega) \ \leq  \
c ||\omega||_{H^1}^2.$$ 
\end{lemma}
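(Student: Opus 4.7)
The plan is to split the inequality into its easy upper bound and the substantive lower bound, and then to derive the lower bound in two stages: first a Gaffney-type inequality that controls the $H^1$ norm up to an $L^2$ remainder, and then a Poincar\'e-type step that absorbs the $L^2$ remainder on the complement $\mathcal{H}^k_D(X_0)^{\curlywedge}$.

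The upper bound $\mathcal{D}(\omega,\omega) \leq c\|\omega\|_{H^1}^2$ is immediate: both $d$ and $d^*$ are first-order differential operators with bounded coefficients on the compact manifold $X_0$, so $\|d\omega\|_{L^2}^2 + \|d^*\omega\|_{L^2}^2 \leq c\|\omega\|_{H^1}^2$ for every $\omega \in H^1\Omega^k(X_0)$.

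For the lower bound, I would first establish the Gaffney inequality
\[
\|\omega\|_{H^1}^2 \leq C_1\bigl(\mathcal{D}(\omega,\omega) + \|\omega\|_{L^2}^2\bigr)
\qquad \text{for all } \omega \in H^1\Omega^k_D(X_0).
\]
Interior control is the standard Weitzenb\"ock/integration-by-parts identity $\|\nabla\omega\|_{L^2}^2 = \|d\omega\|_{L^2}^2 + \|d^*\omega\|_{L^2}^2 + \langle R\omega,\omega\rangle$ up to curvature terms absorbed into the $\|\omega\|_{L^2}^2$ part. Near the boundary one integrates by parts using Green's formula; the boundary integral that appears is $\int_{\partial X_0} ({\bf t}\omega)\wedge *({\bf n}d^*\omega) + \dots$, and these vanish thanks to the Dirichlet condition ${\bf t}\omega=0$, which both kills the tangential traces and forces the normal traces of derivatives to behave as needed. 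A localization with a partition of unity subordinate to interior charts and boundary charts, combined with the density of smooth forms in $H^1\Omega^k_D(X_0)$, yields the stated estimate.

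Next I would remove the $\|\omega\|_{L^2}^2$ term on $\mathcal{H}^k_D(X_0)^{\curlywedge}$ by a compactness argument. If the inequality $\|\omega\|_{L^2}^2 \leq C_2\,\mathcal{D}(\omega,\omega)$ fails on this subspace, there is a sequence $\omega_n \in \mathcal{H}^k_D(X_0)^{\curlywedge}$ with $\|\omega_n\|_{L^2}=1$ but $\mathcal{D}(\omega_n,\omega_n)\to 0$. By the Gaffney inequality already proved, $\{\omega_n\}$ is bounded in $H^1$, so after passing to a subsequence it converges weakly in $H^1$ and strongly in $L^2$ to some $\omega_\infty$ with $\|\omega_\infty\|_{L^2}=1$. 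Lower semicontinuity forces $d\omega_\infty=d^*\omega_\infty=0$, and continuity of the trace on $H^1$ forces ${\bf t}\omega_\infty=0$, so $\omega_\infty \in \mathcal{H}^k_D(X_0)$. On the other hand the closed condition $\omega_n \perp \mathcal{H}^k_D(X_0)$ in $L^2$ passes to the limit, giving $\omega_\infty \in \mathcal{H}^k_D(X_0)^\perp$. Combining, $\omega_\infty=0$, contradicting $\|\omega_\infty\|_{L^2}=1$. Adding this to the Gaffney estimate gives the desired lower bound with $c' = 1/(C_1(1+C_2))$.

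The main obstacle is the Gaffney step: it is where the boundary condition interacts with the second-order structure, and one must carefully arrange the integration by parts so that every boundary term that is not a priori small is annihilated by ${\bf t}\omega=0$ (together with the companion identities ${\bf t}\circ d = d\circ {\bf t}$ and ${\bf n}\circ d^* = d^*\circ {\bf n}$). Once that is in place, the contradiction/compactness step is routine thanks to Rellich--Kondrachov on the compact manifold with boundary.
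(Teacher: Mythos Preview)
Your proposal is correct. The paper does not actually prove this lemma; it simply refers the reader to Schwarz's monograph (Proposition~2.2.3 on page~69) for the argument. Your two-step plan --- a Gaffney-type inequality $\|\omega\|_{H^1}^2 \le C_1(\mathcal D(\omega,\omega)+\|\omega\|_{L^2}^2)$ on $H^1\Omega^k_D(X_0)$, followed by a Rellich compactness/contradiction step to eliminate the $L^2$ remainder on the orthogonal complement $\mathcal H^k_D(X_0)^{\curlywedge}$ --- is exactly the standard route taken in that reference, so there is no meaningful divergence to report. One small remark: in the compactness step you can avoid invoking continuity of the trace separately by noting that $H^1\Omega^k_D(X_0)$ is a closed, hence weakly closed, subspace of $H^1\Omega^k(X_0)$, which immediately places the weak limit $\omega_\infty$ in $H^1\Omega^k_D(X_0)$.
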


\begin{theorem}
\label{normal-form}
For each $\eta \in  \mathcal H^k_D(X_0)^{\perp} $,
 there
  is a unique  form 
  $$\phi_D \in \mathcal H^k_D(X_0)^{\curlywedge} \cap L^2_2(X_0; \Lambda^k)$$
such that the equality holds:
$$\eta = d^* d \phi_D +  d d^* \phi_D.$$
Actually $\phi_D$ is a strong solution to the equation
$$\begin{cases}
& \Delta \phi_D=\eta \quad \text{ on }  X_0, \\
& {\bf t} \phi_D =0, \quad {\bf t} d^* \phi_D=0 \quad \text{ on } \partial X_0.
\end{cases}$$
\end{theorem}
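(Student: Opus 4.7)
The plan is to realize $\phi_D$ by the Lax--Milgram theorem applied to the Dirichlet bilinear form $\mathcal{D}$, viewed on the Hilbert space $\mathcal{H}^k_D(X_0)^{\curlywedge}$. By Lemma \ref{equivalence}, $\mathcal{D}$ is equivalent to the $H^1$-inner product on this space, hence continuous and coercive. Fixing $\eta \in \mathcal{H}^k_D(X_0)^{\perp}$, the functional $\omega \mapsto \langle \eta, \omega\rangle_{L^2}$ is continuous on $\mathcal{H}^k_D(X_0)^{\curlywedge}$ via the continuous inclusion $H^1 \hookrightarrow L^2$, so Lax--Milgram produces a unique $\phi_D \in \mathcal{H}^k_D(X_0)^{\curlywedge}$ satisfying $\mathcal{D}(\phi_D, \omega) = \langle \eta, \omega\rangle_{L^2}$ for every such $\omega$.

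Next I would extend this weak identity to all $\omega \in H^1\Omega^k_D(X_0)$. Using the orthogonal decomposition $H^1\Omega^k_D(X_0) = \mathcal{H}^k_D(X_0) \oplus \mathcal{H}^k_D(X_0)^{\curlywedge}$, it suffices to check the identity on $\lambda \in \mathcal{H}^k_D(X_0)$; but $d\lambda = d^*\lambda = 0$ forces $\mathcal{D}(\phi_D, \lambda) = 0$, and the hypothesis $\eta \perp \mathcal{H}^k_D(X_0)$ gives $\langle \eta, \lambda\rangle_{L^2} = 0$. Testing the extended identity first against smooth compactly supported forms supported in the interior of $X_0$ yields the interior equation $d^*d\phi_D + dd^*\phi_D = \eta$ in the distributional sense, and standard interior elliptic regularity for the Hodge Laplacian promotes $\phi_D$ to $(L^2_2)_{\loc}$ away from $\partial X_0$. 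The boundary condition ${\bf t}\phi_D = 0$ is already encoded in the ambient space $H^1\Omega^k_D(X_0)$. To extract the second boundary condition, I would apply Green's formula twice in the identity $\mathcal{D}(\phi_D, \omega) = \langle \eta, \omega\rangle_{L^2}$: a residual boundary integral of the type $\int_{\partial X_0} {\bf t}\omega \wedge *{\bf n}(d^*\phi_D)$ appears, and since ${\bf t}\omega$ can be made an arbitrary boundary section within $H^1\Omega^k_D(X_0)$, this forces ${\bf t}(d^*\phi_D) = 0$.

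Uniqueness is an immediate consequence of coercivity: the difference $\psi$ of two solutions lies in $\mathcal{H}^k_D(X_0)^{\curlywedge}$ with $\mathcal{D}(\psi, \psi) = 0$, so Lemma \ref{equivalence} forces $\psi = 0$. The main obstacle is upgrading the weak solution to a strong $L^2_2$ solution up to the boundary. The mixed boundary conditions---the essential condition ${\bf t}\phi_D = 0$ together with the natural condition ${\bf t}d^*\phi_D = 0$---must be shown to satisfy the Lopatinski--Shapiro covering condition for the Hodge Laplacian, after which the standard Agmon--Douglis--Nirenberg regularity theory for elliptic boundary value problems delivers $\phi_D \in L^2_2(X_0; \Lambda^k)$ and justifies interpreting $\phi_D$ as a strong solution of the stated Dirichlet-type problem.
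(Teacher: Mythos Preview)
The paper does not supply its own proof of this theorem; it simply records the statement and refers to Schwarz \cite{schwarz}, pages 69 and 71. Your Lax--Milgram strategy is exactly the standard route taken there, so in spirit your proposal matches the intended argument.

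There is, however, a concrete slip in your extraction of the natural boundary condition. You write that the residual boundary integral is of the form
\[
\int_{\partial X_0} {\bf t}\omega \wedge *{\bf n}(d^*\phi_D),
\]
and then argue that ${\bf t}\omega$ can be made arbitrary. But $\omega \in H^1\Omega^k_D(X_0)$ means precisely that ${\bf t}\omega = 0$, so such a term would vanish identically and could never force a condition on $\phi_D$. If you carry out the two integrations by parts carefully using Green's formula $\langle d\alpha,\beta\rangle = \langle \alpha, d^*\beta\rangle + \int_{\partial X_0} {\bf t}\alpha \wedge *{\bf n}\beta$, the term $\langle d\phi_D, d\omega\rangle$ contributes no boundary integral (because ${\bf t}\omega = 0$), while $\langle d^*\phi_D, d^*\omega\rangle = \langle dd^*\phi_D, \omega\rangle - \int_{\partial X_0} {\bf t}(d^*\phi_D)\wedge *{\bf n}\omega$. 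The surviving boundary term therefore pairs ${\bf t}(d^*\phi_D)$ against ${\bf n}\omega$, and it is ${\bf n}\omega$ that is unconstrained in $H^1\Omega^k_D(X_0)$; this is what forces ${\bf t}(d^*\phi_D)=0$. Once this is corrected, the rest of your outline (interior regularity, Lopatinski--Shapiro for the pair $({\bf t}\phi, {\bf t}d^*\phi)$, ADN boundary regularity) is the right path to the strong $L^2_2$ solution.
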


\begin{lemma}\label{keylemma}
Suppose $\eta \in \mathcal H^k_D(X_0)^{\curlywedge}$.
Then the lower bound
$$||d  d^* d \phi_D||_{L^2} \geq c ||d^* d \phi_D||_{L^2}$$
holds for some  $c>0$.
\end{lemma}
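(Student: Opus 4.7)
The plan is to apply the norm-equivalence of Lemma \ref{equivalence} to the auxiliary form $\omega := d^{*}d\phi_{D}$. The computation $d^{*}\omega = (d^{*})^{2}d\phi_{D}=0$ makes the Dirichlet integral of $\omega$ reduce to $\|d\omega\|_{L^{2}}^{2}$, so once I know $\omega\in\mathcal H^{k}_{D}(X_{0})^{\curlywedge}$, Lemma \ref{equivalence} immediately gives
\[
\|dd^{*}d\phi_{D}\|_{L^{2}}^{2}=\|d\omega\|_{L^{2}}^{2}+\|d^{*}\omega\|_{L^{2}}^{2}=\mathcal D(\omega,\omega)\ \geq\ c'\|\omega\|_{H^{1}}^{2}\ \geq\ c'\|\omega\|_{L^{2}}^{2},
\]
which is the desired inequality after taking square roots. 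So the real work is to verify that $\omega$ lies in $\mathcal H^{k}_{D}(X_{0})^{\curlywedge}=\mathcal H^{k}_{D}(X_{0})^{\perp}\cap H^{1}\Omega^{k}_{D}(X_{0})$.

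First, I would establish the regularity $\omega\in L^{2}_{1}$. Since the hypothesis gives $\eta\in\mathcal H^{k}_{D}(X_{0})^{\curlywedge}\subset L^{2}_{1}$ and, by Theorem \ref{normal-form}, $\phi_{D}$ is a strong solution of the Dirichlet BVP for $\Delta$ with right-hand side $\eta$, elliptic regularity up to the boundary bumps $\phi_{D}$ two derivatives past $\eta$, giving $\phi_{D}\in L^{2}_{3}$ and therefore $\omega=d^{*}d\phi_{D}\in L^{2}_{1}$.

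Next, I would check the Dirichlet boundary condition ${\bf t}\omega=0$ by rewriting
\[
\omega\ =\ \eta-dd^{*}\phi_{D},
\]
and then using $ {\bf t}\eta=0$ (which holds because $\eta\in H^{1}\Omega^{k}_{D}(X_{0})$) together with the commutation relation ${\bf t}\circ d=d\circ {\bf t}$ listed at the start of the appendix, plus the second boundary condition ${\bf t}d^{*}\phi_{D}=0$ coming from Theorem \ref{normal-form}. This yields ${\bf t}\omega = 0 - d({\bf t}d^{*}\phi_{D})=0$.

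Finally, for the orthogonality to $\mathcal H^{k}_{D}(X_{0})$, I would pair $\omega$ with an arbitrary $\lambda\in\mathcal H^{k}_{D}(X_{0})$ and integrate by parts via Green's formula:
\[
\langle\omega,\lambda\rangle_{L^{2}}=\langle d^{*}d\phi_{D},\lambda\rangle_{L^{2}}=\langle d\phi_{D},d\lambda\rangle_{L^{2}}-\int_{\partial X_{0}}{\bf t}\lambda\wedge *{\bf n}(d\phi_{D}).
\]
Both terms vanish: $d\lambda=0$ because $\lambda$ is harmonic, and ${\bf t}\lambda=0$ because $\lambda\in H^{1}\Omega^{k}_{D}(X_{0})$. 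Hence $\omega\in\mathcal H^{k}_{D}(X_{0})^{\perp}$, which combined with the previous two steps places $\omega$ in $\mathcal H^{k}_{D}(X_{0})^{\curlywedge}$ and closes the argument. The only potentially delicate step is the regularity input in the first paragraph (so that Green's formula and the norm-equivalence actually apply to $\omega$); everything else is a direct algebraic manipulation using the identities ${\bf t}\circ d=d\circ {\bf t}$ and $(d^{*})^{2}=0$.
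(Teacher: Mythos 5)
Your argument is correct and is essentially the paper's own proof: both set $\omega=d^{*}d\phi_{D}$, verify ${\bf t}\omega=0$ via $\omega=\eta-dd^{*}\phi_{D}$ together with ${\bf t}\eta=0$ and ${\bf t}d^{*}\phi_{D}=0$, establish orthogonality to $\mathcal H^{k}_{D}(X_{0})$ by Green's formula, and then invoke Lemma \ref{equivalence} with $d^{*}\omega=0$. The only difference is that you spell out the boundary regularity needed to place $\omega$ in $H^{1}$, a point the paper's proof leaves implicit; this is a welcome but minor addition, not a different route.
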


\begin{proof}
Let us denote 
 $\eta_1 : = d^* d \phi_D$ and  $\eta_2: = dd^* \phi_D$.
We claim  that
$\eta_1  $ lies in $ \mathcal H^k_D(X_0)^{\curlywedge}$.
Let us check ${\bf t}\eta_1=0$.
By definition ${\bf t}\eta=0$ holds, and ${\bf t}\eta_2 = {\bf t}dd^* \phi_D
= d {\bf t}d^* \phi_D =0$. So ${\bf t}\eta_1=0$ holds.
Next take a harmonic form $u \in \mathcal H^k_D(X_0)$.
It  follows from the Green's formula that the equalities
$$<u, d^* d \phi_D>_{L^2} =<d u, d \phi_D>_{L^2} =0$$
hold.
$\eta_1 =d^* d \phi_D$ and hence
the equality
$$\mathcal D(\eta_1, \eta_1) = ||d \eta_1||_{L^2}^2$$
holds. 
Then apply lemma \ref{equivalence} to and obtain the bound
$$ ||d \eta_1||_{L^2}^2 \geq c' || \eta_1||_{H^1}^2 \geq c'||\eta_1||_{L^2}^2.$$
\end{proof}

\begin{corollary}\label{coclosed-est}
Let $\eta \in H^1\Omega^k_D(X_0)$.
Then there is a harmonic form $u \in \mathcal H^k_D(X_0)$ and
 an exact form $d \mu  \in H^1 \Omega^k_D(X_0)$
such that
$$\omega : = \eta -  u - d \mu \in H^1 \Omega^k_D(X_0)$$
satisfies the lower bound 
$$||d  \omega ||_{L^2} \geq c ||\omega||_{L^2}$$
holds for some  $c>0$.
\end{corollary}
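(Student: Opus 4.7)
The plan is to combine the orthogonal projection onto $\mathcal H^k_D(X_0)$ with the Hodge-type decomposition of Theorem \ref{normal-form}, so that $\omega$ becomes exactly the form $\eta_1 = d^*d\phi_D$ that appears in Lemma \ref{keylemma}. The desired lower bound is then inherited directly from that lemma.

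First, I would project $\eta \in H^1\Omega^k_D(X_0)$ orthogonally (in $L^2$) onto the finite-dimensional subspace $\mathcal H^k_D(X_0)$, writing $\eta = u + \eta'$ with $u \in \mathcal H^k_D(X_0)$ and $\eta' \in \mathcal H^k_D(X_0)^{\perp}$. Since harmonic Dirichlet forms are smooth up to the boundary and satisfy ${\bf t}u = 0$, both $u$ and hence $\eta' = \eta - u$ lie in $H^1\Omega^k_D(X_0)$; in particular $\eta' \in \mathcal H^k_D(X_0)^{\curlywedge}$.

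Next, I apply Theorem \ref{normal-form} to $\eta'$ to obtain a unique $\phi_D \in \mathcal H^k_D(X_0)^{\curlywedge} \cap L^2_2(X_0;\Lambda^k)$ with
\[
\eta' = d^*d\phi_D + dd^*\phi_D, \qquad {\bf t}\phi_D = 0, \quad {\bf t}d^*\phi_D = 0.
\]
Set $\mu := d^*\phi_D \in H^1\Omega^{k-1}(X_0)$. Then $d\mu = dd^*\phi_D \in H^1\Omega^k(X_0)$, and the boundary identity ${\bf t}\circ d = d\circ {\bf t}$ combined with ${\bf t}d^*\phi_D = 0$ gives ${\bf t}(d\mu) = 0$, so $d\mu \in H^1\Omega^k_D(X_0)$, as required.

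Defining $\omega := \eta - u - d\mu = d^*d\phi_D$, this is exactly the form $\eta_1$ appearing in the proof of Lemma \ref{keylemma}, where it is shown that $\eta_1 \in \mathcal H^k_D(X_0)^{\curlywedge}$ and $\|d\eta_1\|_{L^2}^2 \geq c'\|\eta_1\|_{L^2}^2$ for some $c' > 0$. This yields $\|d\omega\|_{L^2} \geq c\|\omega\|_{L^2}$ and completes the argument. The only substantive step is the verification that $d\mu$ satisfies the tangential boundary condition, so that the decomposition occurs inside $H^1\Omega^k_D(X_0)$; everything else is a direct packaging of Theorem \ref{normal-form} and Lemma \ref{keylemma}.
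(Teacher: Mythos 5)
Your proof is correct and follows exactly the route the paper intends: project $\eta$ onto the finite-dimensional space $\mathcal H^k_D(X_0)$, apply Theorem \ref{normal-form} to the complement $\eta - u$, take $d\mu = dd^*\phi_D$ (with ${\bf t}(d\mu) = d\,{\bf t}(d^*\phi_D) = 0$), and invoke Lemma \ref{keylemma} for $\omega = d^*d\phi_D$ — precisely the argument the paper spells out for the Neumann analogue in Corollary \ref{N-coclosed-est}. No issues.
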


\subsection{Dirichlet to Neumann conditions}
Denote
 $$H^1 \Omega^k_N(X_0) : =
 \{ \ \omega \in L^2_1(X_0; \Lambda^k); \ {\bf n} \omega =0 \ \}$$ and 
 $\mathcal H^k_N(X_0) 
: = \mathcal H^k(X_0) \cap H^1 \Omega^k_N(X_0)$.

\begin{lemma}\label{N-equivalence}
The Dirichlet integral is equivalent to $H^1$ norm on $\mathcal H^k_N(X_0)^{\curlywedge}$ so that therte is a constant $c,c'>0$ such that the uniform estimates hold:
$$c' ||\omega||_{H^1}^2 \ \leq \ \mathcal D(\omega, \omega) \ \leq  \
c ||\omega||_{H^1}^2.$$ 
\end{lemma}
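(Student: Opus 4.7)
The plan is to reduce this to the already-established Dirichlet case (Lemma \ref{equivalence}) via Hodge star duality, since the Neumann complex is obtained from the Dirichlet complex by applying $*$.

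First, the upper bound $\mathcal{D}(\omega,\omega) \leq c\|\omega\|_{H^1}^2$ is immediate: both $d$ and $d^*$ are first-order differential operators with bounded coefficients on the compact manifold $X_0$, so $\|d\omega\|_{L^2}^2 + \|d^*\omega\|_{L^2}^2 \leq c\|\omega\|_{H^1}^2$ with no boundary condition needed.

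For the nontrivial lower bound, I would show that the Hodge star operator $*\colon L^2(X_0;\Lambda^k) \to L^2(X_0;\Lambda^{m-k})$ restricts to an isomorphism $*\colon \mathcal{H}^k_N(X_0)^{\curlywedge} \to \mathcal{H}^{m-k}_D(X_0)^{\curlywedge}$ that preserves both the $L^2$ norm and the Dirichlet integral. The key ingredients are: (i) $*$ is a pointwise isometry on forms, so it preserves the $L^2$ (and $H^1$) norm; (ii) the boundary relations ${\bf t}* = *{\bf n}$ and ${\bf n}* = *{\bf t}$ recorded in the excerpt imply ${\bf n}\omega = 0 \iff {\bf t}(*\omega)=0$, so $*$ sends $H^1\Omega^k_N(X_0)$ bijectively onto $H^1\Omega^{m-k}_D(X_0)$; (iii) since $d^* = \pm *d*$, one has $\|d\omega\|_{L^2} = \|d^*(*\omega)\|_{L^2}$ and $\|d^*\omega\|_{L^2} = \|d(*\omega)\|_{L^2}$, hence $\mathcal{D}(\omega,\omega) = \mathcal{D}(*\omega,*\omega)$; (iv) $*$ commutes with the Hodge Laplacian $dd^* + d^*d$ and, combined with (ii), maps $\mathcal{H}^k_N(X_0)$ onto $\mathcal{H}^{m-k}_D(X_0)$, so the $L^2$-orthogonal complements correspond under $*$.

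Granting these four points, an arbitrary $\omega \in \mathcal{H}^k_N(X_0)^{\curlywedge}$ has image $*\omega \in \mathcal{H}^{m-k}_D(X_0)^{\curlywedge}$. Applying Lemma \ref{equivalence} to $*\omega$ yields
\begin{equation*}
c'\|{*\omega}\|_{H^1}^2 \;\leq\; \mathcal{D}(*\omega,*\omega) \;\leq\; c\|{*\omega}\|_{H^1}^2.
\end{equation*}
Since $\|\omega\|_{H^1} = \|{*\omega}\|_{H^1}$ (using $*$-invariance of the metric on forms and the fact that the Levi-Civita connection commutes with $*$, so that $|\nabla \omega| = |\nabla *\omega|$ pointwise) and $\mathcal{D}(\omega,\omega) = \mathcal{D}(*\omega,*\omega)$, the desired two-sided bound for $\omega$ follows with the same constants $c,c'$.

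The only subtle point — and the place I would be most careful — is verifying $\|\omega\|_{H^1} = \|{*\omega}\|_{H^1}$, because $H^1$ here means the Sobolev norm built from the Levi-Civita connection on $\Lambda^k$. Since $*$ is parallel with respect to this connection, $\nabla(*\omega) = *\nabla\omega$ pointwise and the pointwise norms agree, so the Sobolev norms coincide; once this is confirmed, the transfer from the Dirichlet inequality is automatic.
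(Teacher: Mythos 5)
Your proposal is correct and follows essentially the same route as the paper: both reduce the Neumann case to Lemma \ref{equivalence} via the Hodge star, using ${\bf t}*=*{\bf n}$ to swap boundary conditions, the identities $\|d\omega\|_{L^2}=\|d^*(*\omega)\|_{L^2}$ and $\|d^*\omega\|_{L^2}=\|d(*\omega)\|_{L^2}$ to preserve the Dirichlet integral, and the equivalence of $\|\omega\|_{H^1}$ with $\|*\omega\|_{H^1}$ to transfer the two-sided bound. Your attention to the parallelism of $*$ for the $H^1$-norm comparison is a careful touch; the paper only asserts the norm equivalence with constants depending on $*$, which suffices.
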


\begin{proof}
It is easy to check that 
Hodge $*$ gives an isomorphism
$$\mathcal H^k_D(X_0)  \cong \mathcal H^{m-k}_N(X_0) $$
where $m = \dim X_0$.
So 
$ * \omega \in  \mathcal H^{m-k}_D(X_0)^{\curlywedge}$ holds when 
 $\omega \in \mathcal H^k_N(X_0)^{\curlywedge}$. 
Then apply  lemma \ref{equivalence} so that the bounds
$$c' ||* \omega||_{H^1}^2 \ \leq \ \mathcal D(* \omega, *\omega) \ \leq  \
c ||*\omega||_{H^1}^2$$ hold.
Then  the conclusion holds, by observing
  the equalities
$$<d* \omega, d*\omega>_{L^2} = <d^*\omega, d^*\omega>_{L^2}, \quad
<d^* *\omega, d^**\omega>_{L^2} = <d\omega, d\omega>_{L^2} $$
 with equivalence 
 $ c' ||*\omega||_{H^1}^2 \leq  ||\omega||_{H^1}^2 \leq c ||*\omega||_{H^1}^2$
for some $c',c>0$ which is determined only by $*$.
\end{proof}

\begin{corollary}
\label{N-normal-form}
For each $\eta \in  \mathcal H^k_N(X_0)^{\perp}$, 
there is a unique  form $\phi_N \in \mathcal H^k_N(X_0)^{\curlywedge} \cap L_2^2 (X_0; \Lambda^k)$
such that the equality
$$\eta = \pm d^* d \phi_N \pm  d d^* \phi_N$$
holds.
Actually $\phi_N$ is a strong solution to the equation
$$\begin{cases}
& ( \pm d^* d  \pm  d d^*)
 \phi_N=\eta \quad \text{ on }  X_0, \\
& {\bf n} \phi_N =0, \quad {\bf n} d \phi_N=0 \quad \text{ on } \partial X_0.
\end{cases}$$
\end{corollary}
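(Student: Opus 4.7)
The plan is to deduce the Neumann version directly from the Dirichlet version (Theorem~\ref{normal-form}) by means of the Hodge $*$ duality, which was already used in the proof of Lemma~\ref{N-equivalence}. Specifically, I will use the fact that $*$ interchanges the boundary operators ${\bf t}$ and ${\bf n}$, commutes with the Laplacian (up to signs determined by $k$ and $m$), and induces an $L^2$-isometric isomorphism $\mathcal H^k_N(X_0)\cong \mathcal H^{m-k}_D(X_0)$.

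First I would transfer the data: given $\eta\in\mathcal H^k_N(X_0)^{\perp}$, set $\tilde\eta:=*\eta$. Since $*$ preserves the $L^2$ inner product and sends $\mathcal H^k_N(X_0)$ onto $\mathcal H^{m-k}_D(X_0)$, the element $\tilde\eta$ lies in $\mathcal H^{m-k}_D(X_0)^{\perp}$. Theorem~\ref{normal-form} then produces a unique $\psi_D\in \mathcal H^{m-k}_D(X_0)^{\curlywedge}\cap L^2_2(X_0;\Lambda^{m-k})$ with
\[
\tilde\eta = d^{*}d\psi_D + dd^{*}\psi_D,
\]
satisfying ${\bf t}\psi_D=0$ and ${\bf t}d^{*}\psi_D=0$ on $\partial X_0$.

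Next I would define $\phi_N:=*\psi_D\in L^2_2(X_0;\Lambda^k)$ and check that it does the job. Applying $*$ to the equation for $\psi_D$ and using that $\Delta$ commutes with $*$ (up to a uniform sign on each degree) produces $\Delta\phi_N=\pm\eta$, which after absorbing signs into the two terms gives $\eta=\pm d^{*}d\phi_N\pm dd^{*}\phi_N$. The boundary conditions transfer cleanly: from ${\bf t}*=*\,{\bf n}$ one gets ${\bf n}\phi_N=0$, and from ${\bf t}d^{*}\psi_D=0$ together with $d^{*}=\pm *d\,*$ one obtains ${\bf n}d\phi_N=0$. Membership in $\mathcal H^k_N(X_0)^{\curlywedge}$ is then immediate, because $*$ sends $\mathcal H^{m-k}_D(X_0)^{\curlywedge}$ isometrically to $\mathcal H^k_N(X_0)^{\curlywedge}$ (it preserves both the space of harmonic $k$-forms with the appropriate boundary condition and the orthogonal-complement constraint).

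For uniqueness, suppose $\phi_N,\phi_N'$ both solve the problem. Their difference $\omega:=\phi_N-\phi_N'$ lies in $\mathcal H^k_N(X_0)^{\curlywedge}$ and satisfies $d^{*}d\omega+dd^{*}\omega=0$. Pairing with $\omega$ and integrating by parts (legal because ${\bf n}\omega=0$ kills the boundary term from $d^{*}d\omega$ and ${\bf n}d\omega=0$ kills the other), one finds $\mathcal D(\omega,\omega)=0$, and Lemma~\ref{N-equivalence} forces $\omega=0$. The main point to be careful about is bookkeeping of the $\pm$ signs (which depend on $k$ and $m$) coming from $**=(-1)^{k(m-k)}$ and from $d^{*}=(-1)^{mk+m+1}*d*$; beyond that, the argument is just the Dirichlet result viewed through $*$.
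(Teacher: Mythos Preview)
Your proposal is correct and follows essentially the same route as the paper: apply the Hodge $*$ to $\eta$, invoke the Dirichlet result (Theorem~\ref{normal-form}) to obtain $\psi_D$, and set $\phi_N:=*\psi_D$, with the boundary conditions and the space $\mathcal H^k_N(X_0)^{\curlywedge}$ transported via ${\bf t}*=*\,{\bf n}$. The only minor difference is that the paper reads off uniqueness of $\phi_N$ directly from the uniqueness of $\psi_D$ in Theorem~\ref{normal-form} (since $*$ is a bijection), whereas you supply an independent argument via $\mathcal D(\omega,\omega)=0$ and Lemma~\ref{N-equivalence}; both are fine.
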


\begin{proof}
Note that $*\eta \in  \mathcal H^{m-k}_D(X_0)^{\perp}$ holds
if $\eta \in  \mathcal H^k_N(X_0)^{\perp}$.
Then apply theorem \ref{normal-form} to $* \eta$
so that 
there is a unique  form $\phi_D \in \mathcal H^{m-k}_D(X_0)^{\curlywedge} \cap L^2_2(X_0;  \Lambda^{m-k})$ with
$* \eta = d^* d \phi_D +  d d^* \phi_D$.

Put $\phi_N: = * \phi_D$, which gives a strong solution to the equation
$$\begin{cases}
& ( \pm d^* d  \pm  d d^*) \phi_N=\eta \quad \text{ on }  X_0, \\
& {\bf n} \phi_N =0, \quad {\bf n} d \phi_N=0 \quad \text{ on } \partial X_0.
\end{cases}$$
\end{proof}

Compare the condition in the 
following proposition with lemma \ref{keylemma}:

\begin{prop}\label{N-keylemma}
Suppose $\eta \in \mathcal H^k_N(X_0)^{\perp}$.
Then the lower bound
$$||d  d^* d \phi_N||_{L^2} \geq c ||d^* d \phi_N||_{L^2}$$
holds for some  $c>0$.
\end{prop}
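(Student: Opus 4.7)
The plan is to mirror verbatim the argument of Lemma \ref{keylemma}, swapping Dirichlet for Neumann boundary conditions throughout and invoking Lemma \ref{N-equivalence} in place of Lemma \ref{equivalence}. Set $\eta_1 := d^* d \phi_N$ and $\eta_2 := d d^* \phi_N$. The goal is to show $\eta_1 \in \mathcal H^k_N(X_0)^{\curlywedge}$, after which the lower bound is immediate: since $d^* \eta_1 = d^* d^* d \phi_N = 0$, the Dirichlet integral reduces to $\mathcal D(\eta_1, \eta_1) = \|d \eta_1\|_{L^2}^2$, and Lemma \ref{N-equivalence} gives $\|d \eta_1\|_{L^2}^2 \geq c' \|\eta_1\|_{H^1}^2 \geq c' \|\eta_1\|_{L^2}^2$, which is exactly the assertion.

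The first thing I would verify is the normal boundary condition ${\bf n} \eta_1 = 0$. Using the relation ${\bf n} \circ d^* = d^* \circ {\bf n}$ from the basic identities and the boundary condition ${\bf n} d \phi_N = 0$ satisfied by the strong solution $\phi_N$ from Corollary \ref{N-normal-form}, we compute ${\bf n} \eta_1 = {\bf n} d^* d \phi_N = d^* ({\bf n} d \phi_N) = 0$. Thus $\eta_1 \in H^1 \Omega^k_N(X_0)$.

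Next, I would check orthogonality to $\mathcal H^k_N(X_0)$. Pick any $u \in \mathcal H^k_N(X_0)$; applying Green's formula,
\[
\langle u , d^* d \phi_N \rangle_{L^2} = \langle d u, d \phi_N \rangle_{L^2} - \int_{\partial X_0} \, {\bf t} u \wedge * \, {\bf n} d \phi_N.
\]
The first term vanishes because $u$ is harmonic, hence $du = 0$, and the second vanishes because ${\bf n} d \phi_N = 0$. Therefore $\eta_1 \perp \mathcal H^k_N(X_0)$, and combined with the previous step we conclude $\eta_1 \in \mathcal H^k_N(X_0)^{\curlywedge}$.

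The only point that is not a purely mechanical adaptation is checking the two boundary identities for $\eta_1$ at the right regularity: this is where we need $\phi_N \in L^2_2$, which is guaranteed by Corollary \ref{N-normal-form}. Once $\eta_1 \in \mathcal H^k_N(X_0)^{\curlywedge}$ is established, Lemma \ref{N-equivalence} yields the desired inequality as outlined above, completing the proof.
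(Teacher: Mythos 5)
Your proposal is correct and follows essentially the same route as the paper's own proof: set $\eta_1 = d^*d\phi_N$, verify ${\bf n}\eta_1 = d^*({\bf n}d\phi_N) = 0$ and orthogonality to $\mathcal H^k_N(X_0)$ via Green's formula using ${\bf n}d\phi_N=0$, then apply Lemma \ref{N-equivalence} together with $\mathcal D(\eta_1,\eta_1)=\|d\eta_1\|_{L^2}^2$. No gaps.
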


\begin{proof}
Consider $\eta_1 : = d^* d \phi_N$. 
Let us check 
$\eta_1  \in \mathcal H^k_N(X_0)^{\curlywedge}$.
 ${\bf n}\eta_1=0$ holds, since
 $${\bf n}\eta_1= {\bf n} d^*d \phi_N =d^*{\bf n}d \phi_N =0.$$
 Take a harmonic form $u \in \mathcal H^k_N(X_0)$.
Since ${\bf n}d \phi_N
=0$ on $\partial X_0$, 
it follows from the Green's formula that the equalities hold:
$$<u, d^* d \phi_N>_{L^2} =<d u, d \phi_N>_{L^2} =0.$$
Then apply lemma \ref{N-equivalence} to and obtain the bound:
$$\mathcal D(\eta_1, \eta_1) \geq c' ||\eta_1||_{L^2}^2.$$
On the other hand $\eta_1 =d^* d \phi_D$ and hence
the equality
$\mathcal D(\eta_1, \eta_1) = ||d \eta_1||_{L^2}^2$
holds. So we obtain the desired estimate
$$ ||d \eta_1||_{L^2}^2 \geq c' || \eta_1||_{H^1}^2 \geq c'||\eta_1||_{L^2}^2.$$
\end{proof}

\begin{corollary}\label{N-coclosed-est}
Let $\eta \in H^1\Omega^k(X_0)$.
Then there is a harmonic form $u \in \mathcal H^k_N(X_0)$ and
 an exact form $d \mu  \in H^1 \Omega^k(X_0)$
such that
$$\omega : = \eta -  u - d \mu \in H^1 \Omega^k_N(X_0)$$
satisfies the lower bound
$$||d  \omega||_{L^2(X_0)} \geq c ||\omega||_{L^2(X_0)}$$
with $d^*(\omega)=0$.
\end{corollary}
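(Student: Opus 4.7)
The plan is to mimic the structure of Corollary \ref{coclosed-est}, but using the Neumann machinery developed in Corollary \ref{N-normal-form} and Proposition \ref{N-keylemma} in place of their Dirichlet counterparts. The natural decomposition will be produced by applying the Neumann Hodge resolution to the component of $\eta$ orthogonal to $\mathcal H^k_N(X_0)$.

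First I would orthogonally project $\eta \in H^1\Omega^k(X_0)$ onto the (finite-dimensional) subspace $\mathcal H^k_N(X_0) \subset L^2(X_0;\Lambda^k)$, writing
\[
\eta = u + \eta', \qquad u \in \mathcal H^k_N(X_0),\ \eta' \in \mathcal H^k_N(X_0)^{\perp}.
\]
Then $u$ will serve as the harmonic form in the statement. Apply Corollary \ref{N-normal-form} to $\eta'$ to obtain a unique $\phi_N \in \mathcal H^k_N(X_0)^{\curlywedge} \cap L^2_2(X_0;\Lambda^k)$ with
\[
\eta' = \pm d^*d\phi_N \pm dd^*\phi_N,
\]
satisfying the Neumann conditions ${\bf n}\phi_N = 0$ and ${\bf n}d\phi_N = 0$ on $\partial X_0$. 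Set
\[
d\mu := \pm dd^*\phi_N, \qquad \omega := \pm d^*d\phi_N,
\]
so that by construction $\omega = \eta - u - d\mu$, with $d\mu$ visibly exact and in $H^1\Omega^k(X_0)$ by the $L^2_2$ regularity of $\phi_N$.

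Next I would verify the three required properties of $\omega$. Coclosedness, $d^*\omega = \pm d^*d^*d\phi_N = 0$, is immediate from $(d^*)^2 = 0$. The Neumann condition ${\bf n}\omega = 0$ follows from the boundary relation ${\bf n}\circ d^* = d^*\circ {\bf n}$: indeed
\[
{\bf n}\omega = \pm {\bf n}d^*d\phi_N = \pm d^*({\bf n}d\phi_N) = 0
\]
by the second boundary condition on $\phi_N$. Hence $\omega \in H^1\Omega^k_N(X_0)$. Finally, the lower bound
\[
\|d\omega\|_{L^2(X_0)} = \|dd^*d\phi_N\|_{L^2(X_0)} \geq c\,\|d^*d\phi_N\|_{L^2(X_0)} = c\,\|\omega\|_{L^2(X_0)}
\]
is precisely the conclusion of Proposition \ref{N-keylemma}, completing the proof.

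The only delicate point, and the place where one must be careful, is tracking the boundary traces so that $\omega$ genuinely lies in $H^1\Omega^k_N(X_0)$ rather than merely in $H^1\Omega^k(X_0)$; this is where the extra Neumann boundary condition ${\bf n}d\phi_N = 0$ supplied by Corollary \ref{N-normal-form} (and not just ${\bf n}\phi_N = 0$) is essential. Once that is in hand, the coercive estimate is exactly Proposition \ref{N-keylemma} applied to $\phi_N$, with no further work required.
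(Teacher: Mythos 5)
Your proposal is correct and follows essentially the same route as the paper: orthogonal projection onto the finite-dimensional space $\mathcal H^k_N(X_0)$, then Corollary \ref{N-normal-form} applied to $\eta - u$ with $\omega = \pm d^*d\phi_N$ and $d\mu = \pm dd^*\phi_N$, and the coercive bound from Proposition \ref{N-keylemma}. Your explicit check that ${\bf n}\omega = 0$ via ${\bf n}\circ d^* = d^*\circ{\bf n}$ and ${\bf n}d\phi_N=0$ is a detail the paper leaves implicit (it appears inside the proof of Proposition \ref{N-keylemma}), but the argument is the same.
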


\begin{proof}
$\mathcal H^k_N(X_0)$ is finite-dimensional
(see \cite{schwarz} page $68$, Theorem $2.2.2$, and use the isomorphism
$*: \mathcal H^k_N(X_0) \cong \mathcal H^{n-k}_D(X_0)$ with $n = \dim X_0$). 
In particular the embedding $\mathcal H^k_N(X_0) \subset L^2(X_0; \Lambda^k)$
is closed. Then let $u$ be  the orthogonal projection of $\eta$ to $\mathcal H^k_N(X_0)$,
and apply Corollary \ref{N-normal-form} and Proposition \ref{N-keylemma}  to $\eta - u $.
\end{proof}

Later we need a special case as below.
Let $Y_0 \subset X_0$ be an embedding of compact submanifolds
with boundary, which satisfy 
 $\partial Y_0 \cap \partial X_0= \phi$.
\begin{corollary}\label{gauge}
Suppose the natural map
$\pi_1(Y_0 ) \to \pi_1(X_0)$ is zero.

Let $\eta \in  H^1\Omega^1(X_0)$.
Then there is 
 an exact form $d \mu'  \in H^1 \Omega^1(Y_0)$
such that:
$$\omega : = \eta -  d \mu' \in H^1 \Omega^k(Y_0)$$
satisfies the lower bound:
$$||d  \omega||_{L^2(X_0)} \geq c ||\omega||_{L^2(Y_0)}$$
with $d^*(\omega)=0$.
\end{corollary}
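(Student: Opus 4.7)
The plan is to reduce the statement to Corollary \ref{N-coclosed-est} applied on the ambient manifold $X_0$, and then to use the topological hypothesis on $\pi_1$ to absorb the unavoidable harmonic residue into the gauge term upon restriction to $Y_0$. First I would apply Corollary \ref{N-coclosed-est} directly to $\eta \in H^1\Omega^1(X_0)$, producing a harmonic form $u \in \mathcal H^1_N(X_0)$, a function $\mu$ with $d\mu \in H^1\Omega^1(X_0)$, and the decomposition
\[
\omega_0 \;:=\; \eta - u - d\mu \;\in\; H^1\Omega^1_N(X_0)
\]
satisfying $d^*\omega_0 = 0$ together with the lower bound $\|d\omega_0\|_{L^2(X_0)} \geq c\,\|\omega_0\|_{L^2(X_0)}$.

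Next I would show that $u|_{Y_0}$ is exact on $Y_0$. Since $u$ is harmonic on $X_0$ we have $du = 0$, and for any smooth loop $\gamma \subset Y_0$ the hypothesis that $\pi_1(Y_0) \to \pi_1(X_0)$ vanishes furnishes a smooth disk $D \subset X_0$ with $\partial D = \gamma$, so that by Stokes' theorem $\int_\gamma u = \int_D du = 0$. Thus every period of $u|_{Y_0}$ vanishes, the de Rham class $[u|_{Y_0}] \in H^1_{dR}(Y_0)$ is trivial, and there exists a function $g$ on $Y_0$ with $u|_{Y_0} = dg$ (of class $L^2_2$ by elliptic regularity, since $u$ is smooth in the interior and $\partial Y_0 \cap \partial X_0 = \emptyset$ keeps $Y_0$ away from the rough part of the boundary data). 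Setting $\mu' := g + \mu|_{Y_0}$, restriction gives
\[
\omega \;:=\; \omega_0|_{Y_0} \;=\; \eta|_{Y_0} - d\mu' \;\in\; H^1\Omega^1(Y_0),
\]
which is the candidate for the corollary.

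Finally I would verify the two required conclusions. Since $d\omega_0 = d\eta - du - d^2\mu = d\eta$ as a form on $X_0$, the natural $X_0$-extension of $d\omega$ is exactly $d\omega_0$, and combining this with the trivial inequality $\|\omega_0\|_{L^2(X_0)} \geq \|\omega_0|_{Y_0}\|_{L^2(Y_0)} = \|\omega\|_{L^2(Y_0)}$ yields
\[
\|d\omega\|_{L^2(X_0)} \;=\; \|d\omega_0\|_{L^2(X_0)} \;\geq\; c\,\|\omega\|_{L^2(Y_0)}.
\]
The co-closedness $d^*\omega = 0$ on $Y_0$ then follows because $d^* = \pm * d *$ is a local differential operator and $d^*\omega_0 = 0$ holds throughout $X_0$.

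The main obstacle lies not in the analytic estimates but in the presence of the harmonic residue $u$ returned by Corollary \ref{N-coclosed-est}: without additional input there is no way to eliminate $u$ by a gauge change, and the conclusion of the corollary genuinely fails whenever $u|_{Y_0}$ is not exact. The topological hypothesis $\pi_1(Y_0) \to \pi_1(X_0) = 0$ is precisely the condition that kills every period of $u$ over loops in $Y_0$, thereby permitting the cohomological correction by $dg$. Once this absorption step is carried out, the lower bound on $Y_0$ is an immediate consequence of the ambient bound on $X_0$.
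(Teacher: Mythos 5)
Your proposal is correct and follows essentially the same route as the paper: apply Corollary \ref{N-coclosed-est} on $X_0$, observe that the harmonic term $u$ becomes exact upon restriction to $Y_0$ because the $\pi_1$-hypothesis kills its periods, and absorb it into the gauge term $d\mu'$. The only difference is that you spell out the period/Stokes argument for the exactness of $u|_{Y_0}$, which the paper leaves implicit.
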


\begin{proof}
It follows from corollary \ref{N-coclosed-est} that 
$$\omega' : =  \eta -  u - d \mu$$
admits the estimates:
$$||d  \omega||_{L^2(X_0)} \geq  ||\omega||_{L^2(X_0)}
\geq  ||\omega||_{L^2(Y_0)}.$$
However $u = d f$ on $Y_0$ by the condition.
So we put
$$\mu' = f+ \mu$$
on $Y$.
\end{proof}

\vspace{3mm}

\end{document}